\theoremstyle{plain}
\newtheorem{theorem}{Theorem}[section]%[chapter]
\newtheorem{lemma}[theorem]{Lemma}
\newtheorem{proposition}[theorem]{Proposition}
\newtheorem{remark}[theorem]{Remark}
\DeclareMathOperator{\R}{\mathbb{R}}
\DeclareMathOperator{\C}{\mathbb{C}}
\DeclareMathOperator{\N}{\mathbb{N}}
\DeclareMathOperator{\Z}{\mathbb{Z}}
\DeclareMathOperator{\T}{\mathbb{T}}
\def\S{\mathbb{S}}
\def\T{\mathbb{T}}
\def\id{\operatorname{id}}
\def\div{\operatorname{div}}
\def\dim{\operatorname{dim}}
\def\Re{\operatorname{\mathfrak{Re}}}
\def\cA{\mathcal{A}}
\def\cAd{\mathcal{A}_{\delta}}
\def\cAad{\mathcal{A}_{\alpha,\delta}}
\def\cAed{\mathcal{A}_{e,\delta}}
\def\cAud{\mathcal{A}_{1,\delta}}
\def\cB{\mathcal{B}}
\def\cBd{\mathcal{B}_{\delta}}
\def\cBed{\mathcal{B}_{e,\delta}}
\def\cBad{\mathcal{B}_{\alpha,\delta}}
\def\cBud{\mathcal{B}_{1,\delta}}
\def\sB{\mathscr{B}}
\def\sC{\mathscr{C}}
\def\cE{\mathcal{E}}
\def\Fa{F_{\alpha}}
\def\sGa{\mathscr{G}_{\alpha}}
\def\cId{\mathcal{I}_{\delta}}
\def\cL{\mathcal{L}}%Forcing term
\def\cLR{\mathcal{L}_{R}^{+}}
\def\cLS{\mathcal{L}_{S}^{+}}
\def\sL{\mathscr{L}}%Linearized operator
\def\sLu{\mathscr{L}_1}
\def\cLa{\mathscr{L}_{\alpha}}%Linearized operator
\def\cM{\mathcal{M}}
\def\Na{\nu_{\alpha}}
\def\Oa{\Omega_{\alpha}}
\def\Od{\Omega_{\delta}}
\def\Plx{\Pi_{\Lambda,\xi}}
\def\Pla{\Pi_{\Lambda,a}}
\def\PLaz{\Pi_{\cLa,0}}
\def\PLzz{\Pi_{\sL_1,0}}
\def\PLza{\Pi_{\sL_1,-a_1}}
\def\PLaa{\Pi_{\cLa,-a_1}}
\def\cQa{\mathcal{Q}_{\alpha}}
\def\cQaS{\mathcal{Q}_{\alpha,S}}
\def\cQaR{\mathcal{Q}_{\alpha,R}}
\def\cQu{\mathcal{Q}_{1}}
\def\cQuR{\mathcal{Q}_{1,R}}
\def\cQe{\mathcal{Q}_e}
\def\Rl{R_{\Lambda}}
\def\cR{\mathcal{R}}
\def\cRl0{\cR_{\cL_0}}
\def\Sl{S_{\Lambda}}
\def\Slx{S_{\Lambda,\xi}}
\def\SLa{S_{\cLa}}
\def\cTu{\mathcal{T}_{1}}
\def\cTR{\mathcal{T}_{1,R}}
\def\cTS{\mathcal{T}_{1,S}}
\def\cTa{\mathcal{T}_{\alpha}}
\def\cTaR{\mathcal{T}_{\alpha,R}}
\def\cTaS{\mathcal{T}_{\alpha,S}}
\def\cTRR{\mathcal{T}_{R}}
\def\vv{\left\langle v\right\rangle}
\def\Td{\Theta_{\delta}}
\def\hu{\hat{u}}
\def\quand{\quad\text{and}\quad}
\newcommand{\supp}{\mathop{\mathrm{supp}}}
\newcommand{\vertiii}[1]{{\left\vert\kern-0.25ex\left\vert\kern-0.25ex\left\vert #1 
    \right\vert\kern-0.25ex\right\vert\kern-0.25ex\right\vert}}
\begin{document}

\title{Inelastic Boltzmann equation driven by a particle thermal bath}

\author{Rafael Sanabria}
\address{PUC--R. Marqu\^{e}s de S\~{a}o Vicente 225, Gavea. Rio de Janeiro, Brazil.}
\email{rafachesky@mat.puc-rio.br}

%\author{Ricardo Alonso}
%\address{PUC--R. Marqu\^{e}s de S\~{a}o Vicente 225, Gavea. Rio de Janeiro, Brazil.}
%\email{ralonso@mat.puc-rio.br}

%\thanks{}

\date{\today}

%\keywords{}

%------------------------------------------------------------------------------------------------------
%Abstract

\begin{abstract}
We consider the spatially inhomogeneous Boltzmann equation for inelastic hard-spheres, with constant restitution coefficient $\alpha\in(0,1)$, under the thermalization induced by a host medium with a fixed Maxwellian distribution and any fixed $e\in(0,1]$. When the restitution coefficient $\alpha$ is close to 1 we prove existence and uniqueness of global solutions considering the close-to-equilibrium regime. We also study the long-time behaviour of these solutions and prove a convergence to equilibrium with an exponential rate. 
\end{abstract}

%------------------------------------------------------------------------------------------------------

\maketitle

\tableofcontents

%------------------------------------------------------------------------------------------------------
%Introduction
%------------------------------------------------------------------------------------------------------

\section{Introduction}

\subsection{Driven granular gases.}

Dilute granular flows are commonly modelled the by Boltzmann equation for inelastic hard-spheres interacting through binary collisions \cite{BrilliantovPoschel}. Due to dissipative collisions, energy continuously decreases in time which implies that, in absence of energy supply, the corresponding dissipative Boltzmann equation admits only trivial equilibria. This is no longer the case if the spheres are forced to interact with a forcing term, in which case the energy supply may lead to a non-trivial steady state. For such a driven system we consider hard spheres particles described by their distribution density $f=f(t,x,v)\geq 0$, $x\in\T^2$, $t>0$ satisfying 
\begin{equation}\label{eqn.BoltzmannEqn}
 \partial_t f+v\cdot\nabla_x f=\cQa(f,f)+\cL(f).
\end{equation}
where $\cQa(f,f)$ is the inelastic quadratic Boltzmann collision operator (see \ref{sec.Kinetic} for a precise definition), and $\cL(f)$ models the forcing term. The parameter $\alpha\in (0,1)$ is the so called ``restitution coeficient'' that characterized the inelasticity of the binary collisions. It quantifies the loss of relative normal velocity of a pair of colliding particles after the collision, with respect to the impact velocity (see \cite[Chapter~ 2]{BrilliantovPoschel}).The purely elastic case is recovered when $\alpha=1$.\\

In the literature there exist several possible physically meaningful choices for the forcing term $\cL$, in order to avoid the cooling of the granular gas. The first one is the pure diffusion thermal bath, studied in \cite{GambaPanferovVillani, MischlerMouhotSteadyState, Tristani}, for which
\[
 \cL_1(f)=\mu\Delta_v f,
\]
where $\mu>0$ is a given parameter and $\Delta_v$ the Laplacian in the velocity variable. Other fundamental examples of forcing terms are the thermal bath with linear friction 
\[
 \cL_2(f)=\mu\Delta_v f+\lambda \div(vf),
\]
where $\mu$ and $\lambda$ are positive constants and $\div$ is the divergence operator with respect to the velocity variable. Also, we have to mention the fundamental example of anti-drift forcing term which is related to the existence of self-similar solution to the inelastic Boltzmann equation:
\[
 \cL_3(f)=-\lambda \div(vf), \quad \lambda>0.
\] 
This forcing term has been treated in \cite{MischlerMouhotSelfSimilarity,MischlerMouhotSelfSimilarity2} for hard spheres.

\subsection{Description of the problem and main results.}

In this paper we consider a situation in which the system of inelastic hard spheres is immersed into a
thermal bath of particles, so that the forcing term $\cL$ is given by a linear scattering operator describing inelastic collisions with the background medium. More precisely, the forcing term $\cL$ is given by a linear Boltzmann collision operator of the form
\begin{equation}\label{eqn.DefL}
 \cL(f):=\cQe(f,\cM_0),
\end{equation}
where $\cQe(\cdot,\cdot)$ is the Boltzmann collision operator associated to the fixed restitution coefficient $e\in(0,1]$, and $\cM_0$ stands for the distribution function of the host fluid which we
assume to be a given Maxwellian with unit mass, bulk velocity $u_0$ and temperature $\theta_0 > 0$:
\begin{equation}\label{eqn.Maxwellian}
 \cM_0(v)=\left(\frac{1}{2\pi\theta_0}\right)^{\frac{3}{2}}\exp\left(-\frac{(v-u_0)^2}{2\theta_0}\right),\quad v\in\R^3.
\end{equation}

An important feature of the collision operators $\cQa(f,f)$ and $\cL(f)$ is that they both preserve mass. That is
\[
 \int_{\R^3}\cQa(f,f)dv= \int_{\R^3}\cL(f) dv=0.
\]
However, only the operator $\cQa$ preserves momentum. Neither the momentum nor the energy are conserved by the $\cL$ operator.\\

The existence of smooth stationary solutions $\Fa$ for the inelastic Boltzmann equation under the thermalization given by the forcing term $\cL$ has been proved in \cite{BisiCarrilloLods}. Uniqueness of the steady state is proven in \cite{BisiCanizoLodsUniqueness} for a smaller range of parameters $\alpha$. Our main result is the proof of existence of solutions for the non-linear problem (\ref{eqn.BoltzmannEqn}) near the equilibrium $\Fa$, as well as stability and relaxation to equilibrium for these solutions (a precise statement is given in Section \ref{sec.Nonlinear}): 

\begin{theorem}
Consider the functional spaces $\cE = W_x^{s,1} L^1_v(e^{b\vv^{\beta}})$ and $\cE_1 = W_x^{s,1} L^1_v(\vv e^{b\vv^{\beta}})$ where $b>0$, $\beta\in(0,1)$ and $s>6$. For $\alpha$ close to 1, for any $e\in(0,1]$, and for an initial datum $f\in\cE_1$ close enough to the equilibrium $\Fa$, there exists a unique global solution $f\in L^{\infty}_t(\cE)\cap L^1_t(\cE_1)$ to  \eqref{eqn.BoltzmannEqn} which furthermore satisfies for all $t\geq 0$,
\[
 \|f_t-\Fa\|_{\cE_0}\leq C e^{-at}\|f_{in}-\Fa\|_{\cE_0},
\]
for some constructive constants $C$ and $a$.
\end{theorem}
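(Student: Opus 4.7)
The plan is to apply the enlargement-of-the-functional-space / factorization method of Gualdani--Mischler--Mouhot, as adapted to the inelastic setting by Mischler--Mouhot and Tristani. Writing $f = \Fa + h$ and using that $\Fa$ is an $x$-independent steady state, the perturbation $h$ satisfies
\[
 \partial_t h + v\cdot\nabla_x h = \cLa h + \cQa(h,h), \qquad \cLa h := \cQa(\Fa,h) + \cQa(h,\Fa) + \cL(h).
\]
Setting $G_{\alpha} := -v\cdot\nabla_x + \cLa$, the theorem reduces to (i) an exponential decay estimate for the linear semigroup $e^{tG_{\alpha}}$ on $\cE$ with the right gain of one velocity weight (giving the $L^{1}_{t}(\cE_1)$ control), and (ii) a bilinear estimate of the schematic form $\|\cQa(g,h)\|_{\cE} \lesssim \|g\|_{\cE}\|h\|_{\cE_1} + \|g\|_{\cE_1}\|h\|_{\cE}$ coming from Povzner-type inequalities, to which a Banach fixed point can be applied.

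The first step is to produce a spectral gap for $G_{\alpha}$ in a small reference Hilbert space, e.g.\ $\cH := L^{2}_{x,v}(\cM_{0}^{-1/2})$. The pure scattering operator $\cL$ is dissipative on $\cH$ with one-dimensional kernel $\mathrm{Span}(\cM_0)$ and enjoys a classical spectral gap, and the transport term is skew-adjoint. For $\alpha$ close to $1$ the steady state $\Fa$ is close (in Gaussian-weighted norm) to $\cM_0$ by the results of Bisi--Carrillo--Lods and Bisi--Ca\~nizo--Lods, so the linearized quadratic contributions $\cQa(\Fa,\cdot) + \cQa(\cdot,\Fa)$ are a small perturbation in the operator norm on $\cH$. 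Since $\cL$ breaks momentum and energy conservation, $\ker G_{\alpha}$ is one-dimensional (mass only), and a Kato-type perturbation argument preserves a spectral gap $-a < 0$ uniformly in $\alpha$ in a neighborhood of $1$.

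The more delicate step is to transfer this decay to the large space $\cE = W^{s,1}_{x} L^{1}_{v}(e^{b\vv^{\beta}})$. I would split $G_{\alpha} = \cA + \cB$ with $\cA$ a smooth cutoff of the gain parts of $\cQa(\Fa,\cdot) + \cQa(\cdot,\Fa)$ and $\cL$, localized in a ball of velocities and away from grazing collisions, and $\cB$ collecting the transport, the full multiplicative loss terms, and the non-truncated gain. The truncation makes $\cA : \cE \to \cH$ bounded (the Gaussian weight is produced by the convolution against $\Fa \sim \cM_0$ on the supported region), while $\cB$ is hypo-dissipative on both $\cE$ and $\cE_1$ because the total collision frequency $\Sigma_{\Fa}(v) + \nu(v) \gtrsim \vv$ dominates the non-truncated gain by choosing the cutoff parameters large enough; the gain of one power of $\vv$ from this loss rate is precisely what feeds the $L^{1}_{t}(\cE_1)$ control. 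Iterating the Duhamel identity
\[
 e^{tG_{\alpha}} = e^{t\cB} + \int_{0}^{t} e^{(t-s)G_{\alpha}}\,\cA\,e^{s\cB}\,ds
\]
a finite number of times then upgrades the spectral gap from $\cH$ to exponential decay on $\cE$ with rate arbitrarily close to $a$.

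With (i) and (ii) in hand, the theorem is closed by a standard fixed-point argument in $X_{T} := L^{\infty}_{t}([0,T];\cE) \cap L^{1}_{t}([0,T];\cE_1)$ for the Duhamel formulation $h_{t} = e^{tG_{\alpha}}h_{in} + \int_{0}^{t} e^{(t-s)G_{\alpha}}\cQa(h_{s},h_{s})\,ds$; smallness of $\|h_{in}\|_{\cE}$ gives a contraction on a small ball, yielding global existence, uniqueness, and the exponential decay stated in the theorem. The main obstacle I expect is the $\alpha$-uniform construction of the splitting $\cA + \cB$: one must verify that the hypo-dissipativity constants of $\cB$ on $\cE$ and $\cE_1$ do not degenerate as $\alpha \to 1$, and simultaneously quantify $\Fa - \cM_0$ in Gaussian-weighted norms so that the perturbative spectral-gap argument in $\cH$ really yields a rate $a$ that is independent of $\alpha$ in a full neighborhood of $1$.
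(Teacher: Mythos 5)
Your overall skeleton (linearize around $\Fa$, factorize the linearized operator, enlarge the functional space from a small Hilbert space, then close the nonlinear problem by Duhamel and a fixed point in $L^{\infty}_t(\cE)\cap L^1_t(\cE_1)$) is the same as the paper's. However, your key spectral step contains a genuine error. You propose to obtain the spectral gap in the reference Hilbert space by treating $\cQa(\Fa,\cdot)+\cQa(\cdot,\Fa)$ as a \emph{small perturbation in operator norm} of the scattering operator $\cL$, on the grounds that $\Fa$ is close to a Maxwellian for $\alpha$ near $1$. This fails: the linearized quadratic contribution does not vanish as $\alpha\to1$ --- at $\alpha=1$ it is the full elastic linearized Boltzmann operator $\cQu(\cM,\cdot)+\cQu(\cdot,\cM)$, an unbounded operator whose loss part is multiplication by a collision frequency $\nu(v)\approx\vv$, hence not small in any operator norm on $\cH$. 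What is small is the \emph{difference} $\cLa-\sL_1$ (Lemma \ref{lem.BoundLaL1}, of order $\eta(\alpha)\to0$), and the perturbation argument must be run off the full elastic operator $\sL_1=\cQu(\cM,\cdot)+\cQu(\cdot,\cM)+\cL-v\cdot\nabla_x$, whose spectral gap in the Hilbert space has to be established separately by combining the known analysis of the elastic linearized operator with transport (Theorem \ref{thm.Gualdani}) with the spectral gap of $\cL$ (Theorem \ref{thm.SpectrumQe}); the one-dimensional kernel then comes from intersecting the five-dimensional kernel of $\hat{\sL}_1$ with $N(\cL)=\mathrm{Span}\{\cM\}$. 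Note also that the relevant reference Maxwellian is $\cM$ with temperature $\theta^{\sharp}=\frac{1+e}{3-e}\theta_0$, not $\cM_0$, and $\Fa\to\cM$ (not $\cM_0$) as $\alpha\to1$.

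A second, more technical gap: for assumption (B.3) of the enlargement theorem you need $T_n=(\cA S_{\cB})^{(*n)}$ to map $\cE=W^{s,1}_xL^1_v(m)$ into the small space $H^{s'}_{x,v}(\cM^{-1/2})$, and the truncated operator $\cA$ only regularizes in the velocity variable. Your sketch does not explain how regularity in $x$ is gained; the paper does this via the commuting differential operator $D_t=t\nabla_x+\nabla_v$ (an averaging-lemma-type mechanism, Lemma \ref{lem.BoundTn}) and interpolation, which produces the $t^{-1}$-singular but integrable bounds needed to iterate the Duhamel formula. Without some such mechanism the factorization argument does not close.
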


\subsection{Strategy of the proof and organization of the paper.}

In the inhomogeneous elastic case the Cauchy problem is usually handled by the theory  of perturbative solutions. This is based on the study of the linearized associated operator. However, this strategy was not available in the inelastic case, due to the absence of precise spectral study of the linearized problem. Another well-known theory in the elastic case is the one of DiPerna-Lions renormalized solutions
\cite{DipernaLions} which is no longer available in the inelastic case due to the lack of entropy estimates for the inelastic Boltzmann equation.\\

The recent work of Gualdani, Mischler, Mouhot \cite{GualdaniMischlerMouhot} presented a new technique to the spectral study of the elastic inhomogeneous regime. They presented an abstract method for deriving decay estimates on the resolvents and semigroups of non-symmetric operators in Banach spaces in terms of estimates in another smaller reference Banach space. As a consequence, they obtained the first constructive proof of exponential decay, with sharp rate, towards global equilibrium for the full nonlinear Boltzmann equation for hard spheres, conditionally to some smoothness and (polynomial) moment estimates. Furthermore, their strategy inspired several works in the kinetic theory of granular gases like \cite{AlonsoBaglandLods,AlonsoGambaTaskovic,BisiCanizoLodsUniqueness,BisiCanizoLodsEntropy,CanizoLods}. Using Gualdani et. al. approach, Tristani \cite{Tristani} was able to develop a perturbative argument around the elastic case in the same line as the one developed by Mischler, Mouhot \cite{MischlerMouhotSelfSimilarity,MischlerMouhotSteadyState}.\\

The strategy in this paper consists in combining the main ideas adopted in \cite{Tristani} with the arguments given by \cite{BisiCanizoLodsUniqueness} and \cite{CanizoLods}. To develop a Cauchy theory for the equation \eqref{eqn.BoltzmannEqn}, we first study the linearized problem around the equilibrium. Thus, we linearize our equation with the ansatz $f=\Fa+h$. Let us denote by $\cLa$ the linearized operator obtained by this ansatz. That is 
\[
 \cLa(h)=\cQa(\Fa,h)+\cQa(h,\Fa)+\cL(h)-v\cdot\nabla_x h.\\
\]

The study of the elastic case consists in deducing the spectral properties in $L^1$ from the well-known spectral analysis in $L^2$. This can be done thanks to a suitable splitting of the linearized operator as $\sL_1=\cA+\cB$, where $\cA$ is bounded and $\cB$ is ``dissipative'' operator, which are defined through an appropriate mollification-truncation process. This process is done in the same line as \cite{GualdaniMischlerMouhot} but incorporating the ideas of \cite{BisiCanizoLodsUniqueness} for the splitting of the forcing term $\cL$. We conclude that the spectrum of the linearized elastic operator is well localized. That means, it has a spectral gap in a large class of Sobolev spaces.\\

A crucial point in our approach is that it strongly relies on the understanding of the elastic problem corresponding to $\alpha=1$. Due to the properties of the equilibrium $\Fa$ presented in \cite{BisiCanizoLodsUniqueness}, we are able to prove that 
\begin{equation}\label{eqn.DifLaL1}
 \cLa-\sL_1=O(1-\alpha),
\end{equation}
for a suitable norm operator. Thus, one deduces the spectral properties of $\cLa$ from those of the elastic operator by a perturbation argument valid for $\alpha$ close enough to $1$. Notice that we only restrict the range of $\alpha$, $e$ is independent of $\alpha$ and can be taken in $(0,1]$. \\

Moreover, as in the case of the linearized operator, we obtain a splitting  $\cLa=\cA_{\alpha}+\cB_{\alpha}$, where $\cB_{\alpha}$ enjoys some dissipative properties and $\cA_{\alpha}$ some regularity properties. Combining this with the well localization of the spectrum of $\sL_1$ and \eqref{eqn.DifLaL1} we are able to deduce some properties of the spectrum of $\cLa$ valid for $\alpha$ close to 1. Moreover, we are able to obtain an estimate on the semigroup thanks to a spectral mapping theorem.\\

Regarding the nonlinear problem, one can build a solution by the use of an iterative scheme whose convergence is ensured due to a priori estimates coming from estimates of the semigroup of the linearized operator. A key element is an estimate for the bilinear collision operator established by Tristani \cite{Tristani}. For a sufficiently close to the equilibrium initial datum, the nonlinear part of the equation is small with respect to the linear part which dictates the dynamic. Therefore, we can recover an exponential decay to equilibrium for the nonlinear problem.\\

The organization of this paper is as follows. After recalling the precise definitions of the Boltzmann operator $\cQa$ and the forcing therm $\cL$ in Section \ref{sec.Prelim}, we proceed to define the function spaces as well as some spectral notations and definitions. The main known results are presented in Section \ref{sec.StadeStateKnownResults}. In Section \ref{sec.PrelimSS} we linearize the inelastic Boltzmann equation and present some important properties regarding the stady states. We begin by introducing the splitting of our forcing term $\cL$ as the sum of a regularizing part and a dissipative part in Section \ref{sec.ForcingSplit}. Moreover, we  prove existence of a spectral gap for the elastic linearized operator as well as decay rate for the linearized semigroup in Section \ref{sec.ElastLinOp}.\\

In Section \ref{sec.LinOp} we show that the inelastic linearized operator is a small perturbation of the elastic one. We also make a fine careful of spectrum close to 0, which allows us to prove existence of a spectral gap. Furthermore, we obtain a property of semigroup decay in $W_x^{s,1} W_v^{2,1}(\vv e^{b\vv^{\beta}})$ with $b>0$ and $\beta\in(0,1)$. Finally, we go back to the nonlinear Boltzmann equation in Section \ref{sec.Nonlinear} and prove our main result.\\

%------------------------------------------------------------------------------------------------------
%Preliminaries
%------------------------------------------------------------------------------------------------------

\section{Preliminaries}\label{sec.Prelim}

%------------------------------------------------------------------------------------------------------
\subsection{Kinetic model}\label{sec.Kinetic}

We assume the granular particles to be perfectly smooth hard spheres of mass $m=1$ performing inelastic collisions. In the model at stake, the inelasticity is characterized by the so-called normal restitution coefficient $\alpha\in(0,1)$. The restitution coefficient quantifies the loss of relative normal velocity of a pair of colliding particles after the collision with respect to the impact velocity (see \cite[Chapter~ 2]{BrilliantovPoschel}). More precisely, if $v$ and $v_*$ (resp. $v'$ and $v'_*$) denote the velocities of a pair of particles before (resp. after) collision, we have the following equalities
\begin{equation}\label{eqn.PrePost}
\left\lbrace\begin{matrix}
  v+v_*=v'+v'_*,\\
  u'\cdot n=-\alpha(u\cdot n),
\end{matrix}\right.
\end{equation}
where $u=v-v_*$, $u'=v'-v'_*$ and $n\in\S^2$ stands for the unit vector that points from the $v$-particle center to the $v_*$-particle center at the moment of impact. The velocities after collision are then given by\\
\begin{equation*}
v'=v-\frac{1+\alpha}{2}(u\cdot n)n,\quad v'_*=v_*+\frac{1+\alpha}{2}(u\cdot n)n.
\end{equation*}
In particular, the rate of kinetic energy dissipation is
\begin{equation}\label{eqn.EnergyDissipation}
 |v'|^2+|v'_*|^2-|v|^2-|v_*|^2\leq -\frac{1-\alpha^2}{4}(u\cdot n)^2\leq 0.\\
\end{equation}

Another parametrization that shall be more convenient in the sequel is the following (see \cite{Tristani, BisiCarrilloLods}). If $v$ and $v_*$ are the velocities of two particles with $v\neq v_*$, we set $\hu=u/|u|$. Then, performing in (\ref{eqn.PrePost}) the change of unknown $\sigma = \hu -2 (\hu\cdot n)n\in\S^2$, it gives an alternative $q$ parametrization of the unit sphere $\S^2$. The impact velocity then writes $|u \cdot n|=|u|\sqrt{\frac{1-\hu\sigma}{2}}$. And the post-collisional velocities $v'$ and $v'_*$ are given by
\begin{equation*}
v'=v-\frac{1+\alpha}{2}\cdot\frac{u-|u|\sigma}{2},\quad v'_*=v_*+\frac{1+\alpha}{2}\cdot\frac{u-|u|\sigma}{2}.
\end{equation*}

Given a constant restitution coefficient $\alpha\in(0,1)$, one defines the weak form of the bilinear Boltzman operator $\cQa$ for inelastic interactions and hard spheres by its action on test functions $\phi(v)$ (see for example \cite[Section~ 2]{CanizoLods}),
\begin{align*}
\int_{\R^3}&\cQa(f,g)\phi(v)dv=\int_{\R^3\times\R^3\times\S^2}g(v_*)f(v)[\phi(v')-\phi(v)]|v-v_*|d\sigma dv_* dv\\\nonumber
&=\frac{1}{2}\int_{\R^3\times\R^3\times\S^2}g(v_*)f(v)[\phi(v'_*)+\phi(v')-\phi(v_*)-\phi(v)]|v-v_*|d\sigma dv_* dv.\\\nonumber
\end{align*}

%------------------------------------------------------------------------------------------------------
\subsection{Function spaces}

Let us introduce the notations we shall use in the sequel (see for instance \cite{BisiCarrilloLods, Tristani}). Throughout the paper we shall use
the notation $\vv=\sqrt{1+|v|^2}$. For any $p,q\geq1$ and any weight $m>0$ on $\R^3$ we define the weighted Lebesgue space

\begin{equation*}
 L^p_xL^q_{v}(\vv m):=\{f:\R^3\to\R\text{ measurable}: \|f\|_{L^p_{x}L^q_{v}(\vv m)}<+\infty\},\\
\end{equation*}

where the norm $\|f\|_{L^p_xL^q_v(\vv m)}$ is defined by

\begin{equation*}
 \|f\|_{L^p_{x}L^q_v(\vv m)}:=\|\|f(\cdot,v)\|_{L^p_x}\vv m(v)\|_{L^q_v}.\\
\end{equation*}

The weighted Sobolev space $W^{s,p}_{x}W^{\sigma,q}_v$ for any $p,q\geq1$ and $\sigma,s\in\N$ is defined by the norm
\begin{align*}
 \|f&\|_{W^{s,p}_{x}W^{\sigma,q}_v (\vv m)}:=\\ 
 &\sum_{0\leq s'\leq s,0\leq\sigma'\leq\sigma,s'+\sigma'\leq\max(s,\sigma)}\|\|\nabla^{s'}_x \nabla^{\sigma'}_v f(\cdot,v)\|_{L^p_x}\vv m(v)\|_{L^q_v}.
\end{align*}

%Moreover, if $r\in(0,1)$, the fractional Sobolev weigthed space $W^{r,p}_{x}W^{\sigma,q}_v(\vv m)$ is determined by the norm
%\begin{align*}
% \|&f\|_{W^{r,p}_{x}W^{\sigma,q}_v(\vv m)}:=\|f\|_{L^p_{x}W^{\sigma,q}_v(\vv m)}\\
% &+\sum_{0\leq\sigma'\leq\sigma}\left(\int_{\R^3}\left(\int_{\R^3}\frac{|\nabla_v^{\sigma'}f(x,v)-\nabla_v^{\sigma'}f(y,v)|}{|x-y|^{3+rp}}^pdxdy\right)^{q/p}\vv m dv\right)^{1/q}.\\
%\end{align*}

Moreover, if $s>1$ and is not an integer then we write $s=t+r$, where $t\in\Z$ and $r\in(0,1)$. In this case, the space $W^{s,p}_{x}W^{\sigma,q}_v(\vv m)$ consist of those equivalence classes of functions $f\in W^{t,p}_{x}W^{\sigma,q}_v(\vv m)$ whose distributional derivatives $\nabla_x^{t}f$ belong to $W^{r,p}_{x}W^{\sigma,q}_v(\vv m)$. This last space is determined by the norm

\begin{align*}
 \|&f\|_{W^{r,p}_{x}W^{\sigma,q}_v(\vv m)}:=\|f\|_{L^p_{x}W^{\sigma,q}_v(\vv m)}\\
 &+\sum_{0\leq\sigma'\leq\sigma,t+\sigma'\leq\max(t,\sigma)}\left(\int_{\R^3}\left(\int_{\R^3}\frac{|\nabla_v^{\sigma'}f(x,v)-\nabla_v^{\sigma'}f(y,v)|}{|x-y|^{4+rp}}^pdxdy\right)^{q/p}\vv m dv\right)^{1/q}.\\
\end{align*}

%------------------------------------------------------------------------------------------------------
\subsection{Spectral notations}

Given a real number $a\in\R$ let us define
\[
 \Delta_a:=\{z\in\C:\Re z>a\}.
\]

For some given Banach spaces $(E, \|\cdot\|_E)$ and $(F, \|\cdot\|_F )$, we denote the space of bounded linear operators from $E$ to $F$ by $\sB(E,F)$ and we denote by $\|\cdot\|_{\sB(E,E)}$ or $\|\cdot\|:E\to E$ the associated operator norm. We write $\sB(E) = \sB(E,E)$ when $F = E$. Moreover, we denote by $C(E,F)$ the space of closed unbounded linear operators from $E$ to $F$ with dense domain, and $C(E)=C(E,E)$ in the case $E=F$.\\

For a Banach space $X$ and $\Lambda\in C(X)$ its associated semigroup is denoted by $S_{\Lambda}(t)$, for $t\geq 0$, when it exists. Also denote by $D(\Lambda)$ its domain, by $N(\Lambda)$ its null space and by $R(\Lambda)$ its range. Let us introduce the D$(\Lambda)$-norm defined as 
\[
 \|f\|_{D(\Lambda)} = \|f\|_X + \|\Lambda f\|_X\text{ for }f \in D(\Lambda).
\]
More generally, for every $k \in\N$, we define
\[
 \|f\|_{D(\Lambda^k)} = \sum_{j=0}^k\|\Lambda^j f\|_X,\quad f\in D(\Lambda^k).
\]

Its spectrum is denoted by $\Sigma(\Lambda)$, and the resolvent set $\rho(\Lambda):=\C \setminus (\Sigma(\Lambda))$. So for any $z\in\rho(\Lambda)$ the operator $\Lambda-z$ is invertible and the resolvent operator
\[
 \Rl(z):=(\Lambda-z)^{-1}
\]
is well-defined, belongs to $\sB(X)$ and has range equal to $D(\Lambda)$. A number $\xi\in\Sigma(\Lambda)$ is said to be an eigenvalue if $N(\Lambda-\zeta)=\{0\}$. Moreover, an eigenvalue $\xi\in\Sigma(\Lambda)$ is said to be isolated if there exists $r > 0$ such that
\[
 \Sigma(\Lambda)\cap \{|z\in\C:|z-\xi|<r\}=\{\xi\}.
\]

If $\xi$ is an isolated eigenvalue we define the associated spectral projector 
\[
 \Plx:=-\frac{1}{2\pi i}\int_{|z-\xi|=r'}R_{\Lambda}(z)dz\in\sB(X),
\]
which is independent of $0<r'<r$ since $z\mapsto \Rl(z)$ is holomorphic. It is well-known that $\Plx^2=\Plx$ so it is a projector and the ``associated projected semigroup'' is
\[
 \Slx(t):=-\frac{1}{2\pi i}\int_{|z-\xi|=r'}e^{tz}\Rl(z)dz,\quad t>0;
\]  
which satisfies that for all $t>0$ 
\[
 \Slx(t)=\Plx\Slx(t)=\Slx\Plx(t).
\]
When the ``algebraic eigenspace'' $R(\Plx)$ is finite dimensional we say that $\xi$ is a discrete eigenvalue, written as $\xi\in\Sigma_d(\Lambda)$. For more about these results we refer the reader to \cite[Chapter~III-6]{Kato}.\\

Finally for any $a\in\R$ such that $\Sigma(\Lambda)\cap\Delta_a=\{\xi_1,...,\xi_k\}$ where $\xi_1,...,\xi_k$ are distinct discrete eigenvalues, we define without ambiguity
\[
 \Pla=\Pi_{\Lambda,\xi_1}+\cdots+\Pi_{\Lambda,\xi_k}.
\]

If one considers some Banach spaces $X_1$, $X_2$, $X_3$, for two given functions $S_1\in L^1(\R_+,\sB(X_1,X_2))$ and $S_2\in L^1(\R_+,\sB(X_2,X_3))$, the convolution 
\[
S_1\ast S_2\in L^1(\R_+,\sB(X_1,X_3)),
\]
is defined for all $t\geq0$ as 
\[
  (S_1\ast S_2)(t):=\int_0^tS_2(s)S_1(t-s)ds.
\]

When $S_1 = S_2$ and $X_1 = X_2 = X_3$, $S^{(\ast l)}$ is defined recursively by $S^{(\ast 1)}=S$ and for any $l\geq 2$, $S^{(\ast l)} = S \ast S^{(\ast(l-1))}$.

%------------------------------------------------------------------------------------------------------
\subsection{Hypodissipative operators}

Let us introduce the notion of \textit{hypodissipative} operators. Consider a Banach space $(X, \|\cdot\|_X)$ and some operator $\Lambda\in C(X)$. $(\Lambda-a)$ is said to be hypodissipative on $X$ if there exists some norm $|||\cdot|||_X$ on $X$ equivalent to the initial norm $\|\cdot\|_X$ such that for every $f\in D(\Lambda)$ there exist $\phi\in F(f)$ such that
\[
 \Re\left\langle \phi,(\Lambda-a)f\right\rangle\leq 0,
\]
where $\left\langle\cdot,\cdot\right\rangle$ is the duality bracket for the duality in $X$ and $X^{*}$ and, $F(f)\subset X^{*}$ is the dual set of $f$ defined by
\begin{equation}\label{eqn.DefFDual}
 F(f):=\{\phi\in X^{*}: \left\langle\phi,f\right\rangle=|||f|||_{X^{*}}=|||f|||_X\}.
\end{equation}

The following theorem is a non standard formulation of the classical Hille-Yosida theorem on $m$-dissipative operators and semigroups. It summarizes the link between PDE's, the semigroup theory and spectral analysis. For the proof of this result we refer the reader to \cite[Chapter~ 1]{Pazy},  and for more about hypodissipative operators see \cite[Section~ 2.3]{GualdaniMischlerMouhot} and \cite[Section~2.1]{MischlerScher}.\\

\begin{theorem}\label{thm.HypodEquivalence}
Consider $X$ a Banach space and $\Lambda$ the generator of a $C^0$-semigroup $\Sl$. We denote by $\Rl$ its resolvent. For given constants $a\in \R$ and, $M>0$ the following assertions are equivalent:
\begin{enumerate}
\item $\Lambda -a$ is hypodissipative;
\item the semigroup satisfies the growth estimate for every $t\geq 0$
\[
 \|\Sl(t)\|_{\sB(X)}\leq Me^{at};
\]
\item $\Sigma(\Lambda)\cap \Delta_a=\emptyset$ and for all $z\in\Delta_a$
\[
 \|\Rl(z)^n\|\leq\frac{M}{(\Re z -a)^n};
\]
\item $\Sigma(\Lambda)\cap (a,\infty)=\emptyset$ and there exist some norm $|||\cdot|||$ on $X$ equivalent to the norm $\|\cdot\|$ such that for all $f\in X$
\[
 \|f\|\leq |||f|||\leq M\|f\|,
\]
and such that for every $\lambda>a$ and every $f\in D(\Lambda)$
\[
 |||(\Lambda-\lambda)f|||\geq (\lambda-a)|||f|||.
\]
\end{enumerate}
\end{theorem}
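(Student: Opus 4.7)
The plan is a cyclic chain $(4) \Rightarrow (1) \Rightarrow (2) \Rightarrow (3) \Rightarrow (4)$. Throughout, set $A := \Lambda - a$ so that each assertion becomes a statement about the translated semigroup $e^{-at}\Sl(t)$ and its generator $A$: the four conditions then read, in order, dissipativity of $A$ with respect to an equivalent norm, contractivity-up-to-$M$ of $e^{-at}\Sl(t)$, the uniform estimate $\|(\mu - a)^n \Rl(\mu)^n\| \leq M$ on iterated resolvents, and the contractivity of $(\lambda - a)\Rl(\lambda)$ with respect to a renormed ball.

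For $(4) \Rightarrow (1)$ I invoke Lumer's dissipativity criterion: writing $\mu = \lambda - a > 0$, the hypothesis $|||(\mu I - A) f||| \geq \mu |||f|||$ allows one to choose by Hahn--Banach a norming functional $\psi_\mu$ at $(\mu I - A)f$, extract a weak-$\ast$ cluster point $\phi$ as $\mu \to \infty$, and verify both $\phi \in F(f)$ (in the sense of \eqref{eqn.DefFDual}) and $\Re\langle \phi, A f\rangle \leq 0$, which is hypodissipativity of $\Lambda - a$. For $(1) \Rightarrow (2)$ I fix $f_0 \in D(\Lambda)$ and pair $f'(t) = \Lambda f(t)$, with $f(t) := \Sl(t) f_0$, against the functional $\phi_t \in F(f(t))$ provided by hypodissipativity; a Kato-type one-sided derivative computation yields $D^+ |||f(t)||| \leq a |||f(t)|||$, Gronwall produces $|||f(t)||| \leq e^{at} |||f_0|||$, and norm equivalence together with density of $D(\Lambda)$ upgrade this to $\|\Sl(t)\|_{\sB(X)} \leq M e^{at}$. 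For $(2) \Rightarrow (3)$, the integral $\int_0^\infty e^{-zt}\Sl(t)\,dt$ converges absolutely in operator norm on $\Delta_a$ and one identifies it with $(zI - \Lambda)^{-1} = -\Rl(z)$; differentiating $(n-1)$ times in $z$ under the integral gives
\[
 \Rl(z)^n = \frac{(-1)^n}{(n-1)!}\int_0^\infty t^{n-1} e^{-zt}\Sl(t)\,dt,
\]
whence $\|\Rl(z)^n\| \leq M / (\Re z - a)^n$ and $\Sigma(\Lambda) \cap \Delta_a = \emptyset$.

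The substantive step is $(3) \Rightarrow (4)$: exclusion of $(a, \infty)$ from $\Sigma(\Lambda)$ is contained in (3), and for the equivalent norm I would follow the Feller--Miyadera--Phillips renorming. For each $\mu > a$ set
\[
 |f|_\mu := \sup_{n \geq 0} (\mu - a)^n \|\Rl(\mu)^n f\|,
\]
so that hypothesis (3) delivers $\|f\| \leq |f|_\mu \leq M \|f\|$ and, by construction, $|(\mu - a)\Rl(\mu) f|_\mu \leq |f|_\mu$. Using the commutativity $\Rl(\mu)\Rl(\lambda) = \Rl(\lambda)\Rl(\mu)$ and the resolvent identity, one checks that $\mu \mapsto |f|_\mu$ is monotone nondecreasing and that each $(\lambda - a)\Rl(\lambda)$ remains a contraction for $|\cdot|_\mu$ whenever $\lambda \geq \mu$. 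Defining $|||f||| := \sup_{\mu > a} |f|_\mu$ yields an equivalent norm with $\|f\| \leq |||f||| \leq M \|f\|$ for which $(\lambda - a)\Rl(\lambda)$ is a $|||\cdot|||$-contraction for every $\lambda > a$; writing $f = \Rl(\lambda) g$ with $g = (\Lambda - \lambda) f$ translates this into $|||(\Lambda - \lambda)f||| \geq (\lambda - a) |||f|||$, the required inequality.

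The main obstacle is exactly this last implication: verifying the monotonicity $\mu_1 \leq \mu_2 \Rightarrow |f|_{\mu_1} \leq |f|_{\mu_2}$ and the joint contractivity that makes the sup over $\mu$ finite and equivalent to $\|\cdot\|$ requires expanding $\Rl(\lambda)^n \Rl(\mu)$ via the resolvent identity and using the power-resolvent estimate simultaneously for all $n$. The other three implications are, by comparison, direct applications of Lumer's criterion, Kato's one-sided derivative identity, and the Laplace-transform representation of the resolvent.
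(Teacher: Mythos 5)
Your architecture $(4)\Rightarrow(1)\Rightarrow(2)\Rightarrow(3)\Rightarrow(4)$ is exactly the classical Hille--Yosida/Feller--Miyadera--Phillips argument, which is also what the paper relies on: the paper gives no proof of this theorem and simply refers to Pazy, Chapter~1 (and to Gualdani--Mischler--Mouhot and Mischler--Scher for the hypodissipative formulation). The first three implications are correctly sketched: Lumer's criterion with a weak-$\ast$ cluster point of norming functionals for $(4)\Rightarrow(1)$, the Kato one-sided derivative plus Gronwall for $(1)\Rightarrow(2)$, and the Laplace-transform representation of $\Rl(z)^n$ for $(2)\Rightarrow(3)$ all go through as you describe.

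There is, however, one concrete slip in the step you yourself flag as the delicate one, $(3)\Rightarrow(4)$: you assert that $(\lambda-a)\Rl(\lambda)$ is a contraction for the norm $|\cdot|_\mu$ ``whenever $\lambda\geq\mu$''. The correct (and provable) direction is the opposite one, $a<\lambda\leq\mu$: expanding $\Rl(\lambda)$ via the resolvent identity as a Neumann series in $\Rl(\mu)$, the series $\sum_k(\mu-\lambda)^k\Rl(\mu)^{k+1}$ has nonnegative coefficients precisely when $\lambda\leq\mu$, and only then does the power-resolvent bound sum to $(\lambda-a)^{-1}|f|_\mu$. This matters for closing the argument: since $\mu\mapsto|f|_\mu$ is nondecreasing, $|||f|||=\sup_{\mu>a}|f|_\mu$ is the limit as $\mu\to\infty$, so to conclude $|||(\lambda-a)\Rl(\lambda)f|||\leq|||f|||$ you need the contraction in $|\cdot|_\mu$ for all \emph{large} $\mu$, i.e.\ for $\mu\geq\lambda$; with your stated direction ($\mu\leq\lambda$) the supremum over $\mu>\lambda$ is uncontrolled and the implication does not close. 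The fix is simply to quote the lemma in its standard form (Pazy, Lemma~1.5.4): $\|f\|\leq|f|_\mu\leq M\|f\|$, $(\lambda-a)|\Rl(\lambda)f|_\mu\leq|f|_\mu$ for $a<\lambda\leq\mu$, and $|f|_\lambda\leq|f|_\mu$ for $\lambda\leq\mu$; everything else in your outline then works verbatim.
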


%------------------------------------------------------------------------------------------------------
%Previous results
%------------------------------------------------------------------------------------------------------

\section{Main known results}\label{sec.StadeStateKnownResults}

The existence of smooth stationary solutions for the inelastic Boltzmann equation under the thermalization given by (\ref{eqn.DefL}) has already been proved by Bisi, Carrillo, Lods in \cite[Theorem~ 5.1]{BisiCarrilloLods} for any choice of restitution coefficient $\alpha$. Moreover, the uniqueness of the solution was  obtained by Bisi, Ca\~{n}izo, Lods in \cite[Theorem~1.1]{BisiCanizoLodsUniqueness}. These results can be summarized as follows:

\begin{theorem}\label{thm.ExistenceUniqSS}
For any $\rho>0$ and $\alpha\in(0,1]$, there exists a steady solution $\Fa\in L^1_v(\vv^2)$, $F_{\alpha}(v)\geq 0$ to the problem 
\begin{equation}\label{eqn.StationaryEqn}
 \cQa(\Fa,\Fa)+\cL(\Fa)=0,
\end{equation}
with $\int_{\R^3}\Fa(v)dv=\rho$. Moreover, there exists $\alpha_0\in(0, 1]$ such that such a solution is unique for $\alpha\in(\alpha_0, 1]$. This (unique) steady state is radially symmetric and belongs to $C^{\infty}(\R^3)$.
\end{theorem}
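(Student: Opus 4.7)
The plan is to prove existence via a fixed-point scheme on the stationary equation, and to obtain uniqueness for $\alpha$ close to $1$ by a perturbation argument around the elastic equilibrium. Throughout I work in the spatially homogeneous velocity setting, since the theorem concerns $v$-only steady states.

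For existence, I would fix the mass $\rho > 0$ and look for a fixed point of an appropriate map on a convex weakly compact subset $\mathcal{K}$ of $L^1_v(\langle v\rangle^2)$ consisting of nonnegative functions with prescribed mass, a uniform second-moment bound, and a super-Gaussian tail majorant. The key a priori estimates are: (i) an energy identity obtained by testing \eqref{eqn.StationaryEqn} against $|v|^2$, balancing the negative dissipation \eqref{eqn.EnergyDissipation} from $\cQa$ against the temperature injected by $\cL = \cQe(\cdot,\cM_0)$, which gives an $\alpha$-uniform upper bound on $\int |v|^2 F_\alpha\,dv$; (ii) propagation of polynomial and then stretched-exponential moments via Povzner-type inequalities adapted to the inelastic kernel; (iii) pointwise super-Gaussian upper bounds and a Maxwellian lower bound coming from the gain part of the linear forcing $\cL$. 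A fixed point is then produced either by a Tychonoff--Schauder argument applied to the evolution semigroup of the spatially homogeneous equation (invariant sets give stationary states), or directly on the stationary formulation by splitting into gain minus loss and iterating $f \mapsto g$, where $\nu[f]\,g = \cQa^+(f,f) + \cQe^+(f,\cM_0)$ augmented with the loss absorption.

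For uniqueness near $\alpha = 1$, I note that the elastic stationary equation $\cQu(F_1,F_1) + \cQe(F_1,\cM_0) = 0$ admits the explicit Maxwellian solution $F_1 = \rho\,\cM_{\theta_1}$, where $\theta_1$ is fixed by the energy balance with the bath. For $\alpha$ close to $1$ I would write $\Fa = F_1 + h_\alpha$ and use the perturbative bound $\cQa - \cQu = O(1-\alpha)$ in a suitable weighted norm (the very estimate that motivates \eqref{eqn.DifLaL1}) to show $\|h_\alpha\|$ is small. Uniqueness then reduces to invertibility of the linearized stationary operator
\[
h \longmapsto \cQu(F_1, h) + \cQu(h, F_1) + \cL(h)
\]
restricted to the zero-mass subspace, which follows from the spectral gap of the elastic linearized operator (the operator $\sLu$ whose analysis is carried out in Section~\ref{sec.ElastLinOp}) together with dissipativity of $\cL$ and an implicit function theorem argument. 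This is precisely what pins down the threshold $\alpha_0 \in (0,1]$.

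For radial symmetry and smoothness: choosing $u_0 = 0$ (the setting in which radial symmetry can hold), both $\cQa$ and $\cL$ commute with the $\mathrm{SO}(3)$ action on velocities, so uniqueness forces $\Fa$ to be radial. For $C^\infty$ regularity I would divide \eqref{eqn.StationaryEqn} by the collision frequency $\nu[\Fa](v) \gtrsim \langle v\rangle$ and exploit the smoothing property of the gain terms, in particular $\cQe^+(\Fa,\cM_0)$, which is a convolution-type operator against a $C^\infty$ Maxwellian, to bootstrap Sobolev regularity of $\Fa$ to arbitrary order. The main obstacle I anticipate is the $\alpha$-uniform pointwise control at infinity and from below: these fine estimates are what make the transfer of the elastic spectral information to $\alpha$ near $1$ rigorous and constitute the technical core of \cite{BisiCarrilloLods, BisiCanizoLodsUniqueness}.
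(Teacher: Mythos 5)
The paper itself offers no proof of this statement: it is imported verbatim from the literature, with existence attributed to \cite[Theorem~5.1]{BisiCarrilloLods} and uniqueness, radial symmetry and smoothness to \cite[Theorem~1.1]{BisiCanizoLodsUniqueness}. So the only meaningful comparison is against the strategy of those references, and on that score your outline is faithful in its broad strokes: existence there is indeed obtained from $\alpha$-uniform a priori bounds (moments, stretched-exponential tails, regularity) combined with a dynamic fixed-point argument on the evolution semigroup of the homogeneous equation, and uniqueness for $\alpha$ near $1$ is a perturbation argument around the elastic Maxwellian $\cM$ with temperature $\theta^{\sharp}=\frac{1+e}{3-e}\theta_0$, exploiting the spectral gap of the elastic linearized operator on the zero-mass subspace. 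Your remarks on radial symmetry (via equivariance plus uniqueness, with $u_0=0$) and on $C^{\infty}$ smoothness (bootstrapping through the gain-term smoothing after dividing by the collision frequency) also match the cited arguments.

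Two points in your sketch deserve to be made explicit, because they are where the real work lies. First, for the fixed point you only record an \emph{upper} bound on $\int|v|^2\Fa\,dv$; one equally needs an $\alpha$-uniform \emph{lower} bound on the temperature (non-concentration), otherwise the invariant set may degenerate onto a Dirac mass and the compactness argument produces a trivial limit. This lower bound is a genuine technical hurdle in \cite{BisiCarrilloLods}. Second, your uniqueness argument as phrased risks proving only the existence of a \emph{locally unique branch} $\alpha\mapsto\Fa$ near $\cM$ via the implicit function theorem. To conclude uniqueness among \emph{all} steady states of mass $\rho$ you must first show that \emph{every} element of $\sGa$ satisfies $\|\Fa-\cM\|\to 0$ as $\alpha\to 1$ in a norm strong enough to run the linearized argument --- this is exactly the content of Theorem \ref{thm.ConvergenceFaM}, which must be established as an a priori estimate over the whole set $\sGa$ before any perturbative identification can take place. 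You gesture at this in your closing sentence, but it is the pivot of the uniqueness proof rather than a residual technicality. With those two caveats, the proposal is a reasonable reconstruction of the cited proofs, though it is an outline rather than a proof.
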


Let us denote by $\sGa$ the set of functions $\Fa$ solutions of (\ref{eqn.StationaryEqn}) with mass 1, that satisfies $\int_{\R^3}\Fa(v)dv=1$. We recall a quantitative estimate on the distance between $\Fa$ and the Maxwellian $\cM$:
\begin{equation}\label{eqn.DefM}
\cM(v)=\left(\frac{1}{2\pi\theta^{\sharp}}\right)^{3/2}\exp\left\lbrace-\frac{(v-u_0)}{2\theta^{\sharp}}\right\rbrace,
\end{equation} 
with $\theta^{\sharp}=\frac{1+e}{3-e}\theta_0$ where $\theta_0$ is as in (\ref{eqn.Maxwellian}). This Maxwellian is the unique solution of (\ref{eqn.StationaryEqn}) in the elastic case, i.e. when $\alpha=1$, (see \cite[Theorems~2.3 and ~5.5]{BisiCanizoLodsUniqueness}).

\begin{theorem}\label{thm.ConvergenceFaM}
There exist an explicit function $\eta_1(\alpha)$ such that $\lim_{\alpha\to 1}\eta_1(\alpha)=0$ and such that for any $\alpha_0\in(0,1]$
\[
 \sup_{\Fa\in\sGa}\|\Fa-\cM\|_{\mathcal{Y}}\leq\eta_1(\alpha),\quad\forall\alpha\in(\alpha_0,1].
\]
Here, $\mathcal{Y}=L^1_v(\vv e^{a|v|})$ with $a>0$.
\end{theorem}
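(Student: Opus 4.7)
The plan is to linearize the steady-state equation around $\cM$ and bound $\Fa-\cM$ by exploiting the spectral gap of the elastic linearized operator plus the fact that the inelastic correction to $\cQa$ is of size $(1-\alpha)$.

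\textbf{Step 1 (Linearization of the stationary equations).} Set $h:=\Fa-\cM$. Since $\cM$ is the unique elastic steady state with $\cQu(\cM,\cM)=0$ and $\cL(\cM)=0$ (by the choice $\theta^{\sharp}=\tfrac{1+e}{3-e}\theta_0$), subtracting the identity \eqref{eqn.StationaryEqn} from its elastic counterpart gives
\[
\sL_1 h = -\cQu(h,h) + \bigl[\cQu-\cQa\bigr](\Fa,\Fa),
\]
where $\sL_1 h := \cQu(\cM,h)+\cQu(h,\cM)+\cL(h)$ is the elastic linearized operator. Because $\int_{\R^3}\Fa\,dv=\int_{\R^3}\cM\,dv=1$, we have $\int h\,dv=0$, so $h$ lives in the mass-zero hyperplane where $\sL_1$ is expected to be invertible.

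\textbf{Step 2 (Uniform exponential moments).} I would first invoke the uniform-in-$\alpha$ pointwise Maxwellian bounds on $\Fa$ proved in \cite{BisiCarrilloLods,BisiCanizoLodsUniqueness}: for some $a'>a$ and $C_0<\infty$,
\[
\sup_{\alpha\in(\alpha_0,1]}\;\sup_{\Fa\in\sGa}\|\Fa\|_{L^1_v(\vv^2 e^{a'|v|})}\le C_0.
\]
This is the only ingredient requiring nontrivial analysis of the inelastic stationary equation itself, and it is available in the literature.

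\textbf{Step 3 (Controlling the inelastic defect).} Using the explicit expression of post-collisional velocities, one has $v'_\alpha-v'_1=-\tfrac{\alpha-1}{2}(u-|u|\sigma)/2$, so for a smooth test weight $\psi=\vv e^{a|v|}$,
\[
\bigl|\psi(v'_\alpha)-\psi(v'_1)\bigr|\le C(1-\alpha)\,|u|\,\bigl(\vv e^{a|v|}+\langle v_*\rangle e^{a|v_*|}\bigr).
\]
Plugging this into the weak form of $\cQu-\cQa$ and using the uniform exponential moment of Step 2 with exponent $a'>a$ to absorb the extra $|u|$ factor, we obtain
\[
\bigl\|(\cQu-\cQa)(\Fa,\Fa)\bigr\|_{\mathcal{Y}}\le C(1-\alpha).
\]

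\textbf{Step 4 (Spectral gap for $\sL_1$ in $\mathcal{Y}$).} The operator $\sL_1$ is (up to the velocity-dependent zeroth-order part) the sum of the classical elastic linearized Boltzmann operator around $\cM$ and the linear scattering operator $\cL=\cQe(\cdot,\cM_0)$. On the mass-zero subspace of $\mathcal{Y}=L^1_v(\vv e^{a|v|})$ I would establish the coercive estimate
\[
\|\sL_1^{-1}R\|_{\mathcal{Y}}\le C\|R\|_{\mathcal{Y}}\quad\text{whenever }\int R\,dv=0,
\]
by splitting $\sL_1=\cA+\cB$ as in the mollification--truncation procedure of \cite{GualdaniMischlerMouhot} (with $\cL$ split as in \cite{BisiCanizoLodsUniqueness}), transferring the classical $L^2(\cM^{-1/2})$ spectral gap of the elastic linearized Boltzmann operator to the larger space $\mathcal{Y}$ via the enlargement theorem, and using that the only steady Maxwellian in our problem is $\cM$ itself, so the kernel reduces to $\mathrm{span}(\cM)$ (eliminated by the mass-zero constraint).

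\textbf{Step 5 (Closing the estimate).} Applying the inverse of $\sL_1$ and using the standard bilinear estimate $\|\cQu(h,h)\|_{\mathcal{Y}}\le C_*\|h\|_{\mathcal{Y}}\|h\|_{L^1_v(\vv^2 e^{a|v|})}$ together with the uniform moments of Step 2 (which also bound $h$ in the stronger norm), I get
\[
\|h\|_{\mathcal{Y}}\le C\|h\|_{\mathcal{Y}}^2+C(1-\alpha).
\]
A soft a priori continuity argument (e.g., $\Fa\to\cM$ in some weak sense as $\alpha\to1$, available from compactness of $\sGa$) ensures $\|h\|_{\mathcal{Y}}$ is smaller than $1/(2C)$ for $\alpha$ close to $1$, and the inequality then yields $\|h\|_{\mathcal{Y}}\le 2C(1-\alpha)=:\eta_1(\alpha)$.

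\textbf{Main obstacle.} The hardest ingredient is Step 4: establishing the quantitative invertibility of $\sL_1$ on the mass-zero subspace of the exponentially weighted $L^1$ space $\mathcal{Y}$. The classical spectral gap lives in $L^2(\cM^{-1/2})$, and transporting it to $\mathcal{Y}$ requires a careful splitting of both the elastic linearized Boltzmann operator and the linear scattering operator $\cL$, together with application of the Gualdani--Mischler--Mouhot enlargement theorem. The secondary difficulty is verifying that the uniform exponential-tail estimate of Step 2 genuinely holds uniformly as $\alpha\to 1$; this is where the results of \cite{BisiCarrilloLods,BisiCanizoLodsUniqueness} are essential.
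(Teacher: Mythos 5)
This statement is not proved in the paper: it sits in Section \ref{sec.StadeStateKnownResults} (``Main known results'') and is imported from Bisi--Ca\~{n}izo--Lods \cite{BisiCanizoLodsUniqueness}, so there is no in-paper argument to compare yours against. That said, your plan is a faithful reconstruction of the strategy used in that reference and in the Mischler--Mouhot elastic-limit papers: uniform-in-$\alpha$ moment and regularity bounds, a nonquantitative convergence $\Fa\to\cM$ by compactness plus uniqueness of the elastic steady state, and a quantitative rate obtained by inverting the linearized elastic operator on the mass-zero subspace against an $O(1-\alpha)$ defect. Your Step 1 identity $\sL_1 h=-\cQu(h,h)+[\cQu-\cQa](\Fa,\Fa)$ is correct, and Step 3 is essentially the content of Proposition \ref{prop.MM32}.

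Two steps have genuine gaps. First, in Step 5 the inequality $\|h\|_{\mathcal{Y}}\le C\|h\|_{\mathcal{Y}}^2+C(1-\alpha)$ does not follow from the bilinear estimate you quote: that estimate controls $\|\cQu(h,h)\|_{\mathcal{Y}}$ by $\|h\|_{\mathcal{Y}}\|h\|_{L^1_v(\vv^2e^{a|v|})}$, and bounding the second factor by the uniform constant of Step 2 yields only a \emph{linear} term $C\|h\|_{\mathcal{Y}}$, which cannot be absorbed. You must either interpolate $\|h\|_{L^1_v(\vv^2e^{a|v|})}\lesssim\|h\|_{\mathcal{Y}}^{1-\lambda}$ against the uniform Gaussian moment $\int\Fa e^{A|v|^2}dv\le M$ (giving a superlinear power $2-\lambda>1$), or first use the qualitative convergence to make $\|h\|_{L^1_v(\vv^2e^{a|v|})}$ smaller than $1/(2C)$. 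Second, Step 4 is the heart of the matter and remains a plan rather than a proof: the weight $e^{a|v|}$ corresponds to $\beta=1$, whereas every hypodissipativity estimate in this paper (see the treatment of $I_3$ in Lemma \ref{lem.HypodissBed}) is carried out for stretched exponentials $e^{b\vv^{\beta}}$ with $\beta\in(0,1)$ strictly; you would have to redo the splitting $\sL_1=\cA+\cB$ and the enlargement argument for the full exponential weight, and also verify that $N(\cTu)\cap N(\cL)=\mathrm{Span}\{\cM\}$ (the elastic linearized operator alone has a five-dimensional kernel) so that the single constraint $\int h\,dv=0$ suffices for invertibility. Finally, the ``soft a priori continuity argument'' you invoke is precisely the nonquantitative half of the theorem and rests on the uniqueness of the elastic steady state (\cite[Theorems~2.3 and 5.5]{BisiCanizoLodsUniqueness}); it deserves to be listed as an explicit ingredient, on par with Step 2, rather than as an afterthought.
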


The weak form of the collision operator $\cQe$ suggests the natural splitting between gain and loss parts $\cL=\cL^+-\cL^-$. For the loss part notice that 
\begin{equation*}
\left\langle\cL^-(f,g),\psi\right\rangle=\int_{\R^3\times\R^3\times\S^2}g(v_*)f(v)\psi(v)|v-v_*|d\sigma dv_* dv=\left\langle fL(g),\psi\right\rangle,
\end{equation*}
where $\left\langle\cdot,\cdot\right\rangle$ represents the inner product in $L^2$ and $L$ is the convolution operator
\begin{equation}\label{eqn.DefColFreqNuE}
 L(g)(v)=4\pi(|\cdot|\ast g)(v).\\
\end{equation}

\begin{remark}
Notice that $L$ and $\cL^-=\cQe^-$ are independent of the restitution coefficient.
\end{remark}

Let us introduce the collision frequency $\nu_e:=L(\cM_0)$ and consider 
\begin{equation}\label{eqn.NuE0}
 \nu_e^0=\inf_{v\in\T^3}\nu_e(v)>0.
\end{equation}
 
It is easy to see that $\nu(v)\approx\vv$. In other words, there exist some constants $\nu_{e,0},\nu_{e,1}>0$ such that for every $v\in\R^3$
\begin{equation}\label{eqn.ColFreqIneq}
 0<\nu_{e,0}\leq \nu_{e,0}\vv\leq\nu_e(v)\leq \nu_{e,1}\vv.
\end{equation}

Consider the space $\mathscr{H}=L^2_v(\cM^{-1/2})$. Arlotti and Lods in \cite[Theorem~3.7]{ArlottiLods} performed the spectral analysis of $\cL$ in $\mathscr{H}$. Moreover, Mouhot, Lods and Toscani in \cite{LodsMouhotToscani} give some quantitative estimates of the spectral gap. These results can be summarized in the following: \\

\begin{theorem}\label{thm.SpectrumQe}
Consider $\left\langle \cdot,\cdot\right\rangle_{\mathscr{H}}$ the inner product in the Hilbert space $\mathscr{H}$. Then we have the following description of the spectrum of $\cL$:
\begin{enumerate}
\item $\cL^+$ is compact in $\mathscr{H}$.\\
\item The spectrum of $\cL$ as an operator in $\mathscr{H}$ consist of the spectrum of $-\cL^-$ and of, at most, eigenvalues of finite multiplicities. Precisely, we have
\[
 \Sigma(\cL)=\{\lambda\in\R:\lambda\leq-\nu_e^0\}\cup\{\lambda_n:n\in I\},
\]
where $I\subset\N$ and $(\lambda_n)_n$ is a decreasing sequence of real eigenvalues of $\cL$ with finite algebraic multiplicities: $\lambda_0=0>\lambda_1>\lambda_2\cdots>\lambda_n>\cdots$, which unique possible cluster point is $-\nu_e^0$, where $\nu_e^0$ was defined in \ref{eqn.NuE0}.\\
\item $\cL$ is a nonnegative self-adjoint operator and there exists $\mu_e>0$ (the spectral gap) such that
\[
 -\left\langle h,\cL(h)\right\rangle_{\mathscr{H}}\geq \mu_e\|h-\rho_h \cM\|_{\mathscr{H}},
\]
where $\rho_h=\int h dv$.\\
\item $0$ is a simple eigenvalue of $\cL$ with $N(\cL)=Span\{\cM\}$
\end{enumerate}
 
\end{theorem}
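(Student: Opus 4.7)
The plan is to follow the standard program for linear scattering operators with a Maxwellian background, combining a Grad-type splitting of $\cL$, Weyl's essential-spectrum theorem, detailed balance with respect to $\cM$, and a dissipation inequality.

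First, I would prove compactness of $\cL^+$ on $\mathscr{H}$. Starting from the weak form, rewrite $\cL^+ f$ as an integral operator $\int k(v,w) f(w)\, dw$ by absorbing the Maxwellian $\cM_0$ and performing a Carleman-type change of variables from $(v_*,\sigma)$ to the post-collisional velocity $w=v'$. The kernel $k(v,w)$ inherits the Gaussian decay of $\cM_0$ in both arguments, and after conjugating by the weight $\cM^{-1/2}(v)\cM^{1/2}(w)$ (so that the operator is studied in $L^2_v$), the symmetrized kernel turns out to be Hilbert--Schmidt away from the diagonal; a standard truncation realizes $\cL^+$ as a norm-limit of Hilbert--Schmidt operators, hence compact.

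Second, describe the essential spectrum. Since $\cL^-$ acts as multiplication by $\nu_e(v)$, one has $\Sigma(-\cL^-)=\overline{\{-\nu_e(v):v\in\R^3\}}=(-\infty,-\nu_e^0]$ by \eqref{eqn.ColFreqIneq}. Weyl's theorem applied to the compact perturbation $\cL^+$ gives $\Sigma_{\mathrm{ess}}(\cL)=(-\infty,-\nu_e^0]$, so $\Sigma(\cL)\cap\{\Re z>-\nu_e^0\}$ consists of discrete eigenvalues of finite multiplicity, clustering at most at $-\nu_e^0$.

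Third, establish self-adjointness and the sign of the quadratic form through detailed balance. The temperature $\theta^\sharp=\frac{1+e}{3-e}\theta_0$ is chosen precisely so that the micro-reversibility identity $\cM(v)\cM_0(v_*)=\cM(v')\cM_0(v'_*)$ holds for the $e$-inelastic collision rule (a direct computation using \eqref{eqn.PrePost} and \eqref{eqn.EnergyDissipation}). Substituting it into the symmetrized weak form of $\cQe(\cdot,\cM_0)$ yields $\langle \cL h,g\rangle_{\mathscr{H}}=\langle h,\cL g\rangle_{\mathscr{H}}$, together with the dissipation identity
\[
-\langle h,\cL h\rangle_{\mathscr{H}}=\frac{1}{2}\int_{\R^3\times\R^3\times\S^2}|v-v_*|\,\cM(v)\cM_0(v_*)\Bigl(\frac{h'}{\cM'}-\frac{h}{\cM}\Bigr)^2\,d\sigma\,dv_*\,dv\geq 0.
\]
This both gives the sign and shows that $\cL(h)=0$ forces $h/\cM$ to be a collision invariant of the linear problem, hence a constant; so $N(\cL)=\mathrm{Span}\{\cM\}$ and $0$ is simple.

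Fourth, extract the spectral gap $\mu_e>0$. Non-constructively, self-adjointness together with the discreteness of $\Sigma(\cL)$ in $\{\Re z>-\nu_e^0\}$ immediately gives $\lambda_1:=\sup(\Sigma(\cL)\setminus\{0\})<0$, and one takes $\mu_e:=-\lambda_1$. The constructive bound of Lods--Mouhot--Toscani is obtained by combining the dissipation identity above with a weighted Poincar\'e inequality for $\cM$, using a quantitative lower bound on $|v-v_*|$ on balls of positive $\cM_0$-mass. The main obstacle I expect is precisely this last step: extracting an \emph{explicit} rate $\mu_e$ (rather than a qualitative gap) requires controlling the dissipation functional below by $\|h-\rho_h\cM\|_{\mathscr{H}}^2$, which is considerably more delicate than the soft argument via compactness and self-adjointness.
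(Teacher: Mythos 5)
This theorem is stated in the paper as a \emph{known result}: the paper gives no proof and simply cites Arlotti--Lods and Lods--Mouhot--Toscani, so the comparison is against the route taken in those references (whose explicit Carleman kernel $k_e(v,v_*)$ the paper does record in its Appendix). Your overall program --- Grad splitting, compactness of the gain part, Weyl's theorem for the essential spectrum, self-adjointness in $L^2_v(\cM^{-1})$, and then the gap --- is the right one and matches that route in Steps 1, 2 and 4.

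However, Step 3 contains a genuine error. The pointwise micro-reversibility identity $\cM(v)\cM_0(v_*)=\cM(v')\cM_0(v'_*)$ is \emph{false} for every $e<1$, no matter how $\theta^{\sharp}$ is chosen. Writing $a=(v-v_*)\cdot n$, $p=v\cdot n$, $q=v_*\cdot n$ and taking $u_0=0$, the collision rule \eqref{eqn.PrePost} gives
\begin{equation*}
\frac{|v'|^2-|v|^2}{2\theta^{\sharp}}+\frac{|v'_*|^2-|v_*|^2}{2\theta_0}
=\frac{1+e}{2}\,a\left[-\frac{p}{\theta^{\sharp}}+\frac{q}{\theta_0}+\frac{1+e}{4}(p-q)\Bigl(\frac{1}{\theta^{\sharp}}+\frac{1}{\theta_0}\Bigr)\right],
\end{equation*}
and requiring the bracket to vanish identically in $p,q$ forces $\theta^{\sharp}=\theta_0$ \emph{and} $e=1$. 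This is unavoidable: the inelastic collision map is neither an involution nor measure-preserving, so one cannot symmetrize the weak form along pre/post-collisional variables, and your claimed dissipation identity with the measure $|v-v_*|\,\cM(v)\cM_0(v_*)\,d\sigma\,dv_*\,dv$ and the quantity $\bigl(h'/\cM'-h/\cM\bigr)^2$ is likewise unavailable. Since self-adjointness, nonpositivity of the quadratic form, and the identification $N(\cL)=\mathrm{Span}\{\cM\}$ all rest on this step, the argument as written does not close.

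The correct mechanism --- and the one used in the cited references --- is detailed balance at the level of the \emph{Carleman kernel} rather than along individual collisions. One first derives the integral representation $\cL^{+}h(v)=\int_{\R^3}k_e(v,w)h(w)\,dw$ with the explicit Gaussian kernel appearing in Proposition \ref{prop.BoundH}, and then checks directly on that formula that $k_e(v,w)\cM(w)=k_e(w,v)\cM(v)$; this single computation is precisely what singles out $\theta^{\sharp}=\frac{1+e}{3-e}\theta_0$. Setting $\sigma(v,w):=k_e(v,w)\cM(w)$ (symmetric and positive) and using $\int k_e(v,w)\cM(w)\,dw=\nu_e(v)\cM(v)$, one obtains the valid Dirichlet form
\begin{equation*}
-\left\langle h,\cL(h)\right\rangle_{\mathscr{H}}
=\frac{1}{2}\int_{\R^3\times\R^3}\sigma(v,w)\Bigl(\frac{h(v)}{\cM(v)}-\frac{h(w)}{\cM(w)}\Bigr)^2dv\,dw\geq 0,
\end{equation*}
from which self-adjointness, the sign, the simplicity of the eigenvalue $0$ (by strict positivity of $\sigma$), and --- after Weyl plus compactness --- the spectral gap all follow as you intended. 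With Step 3 repaired in this way the rest of your outline is sound.
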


%------------------------------------------------------------------------------------------------------
%Preliminaries on steady states
%------------------------------------------------------------------------------------------------------

\section{Preliminaries on steady states}\label{sec.PrelimSS}

%As mentioned in Section \ref{sec.StadeStateKnownResults}, Bisi, Ca\~{n}izo, Lods \cite[Theorem ~1.1]{BisiCanizoLodsUniqueness} proved that there exist $\alpha_0\in(0,1]$ such that for every $\alpha\in(\alpha_0,1]$ there exist a unique solution of (\ref{eqn.StationaryEqn}) in $L^1_v(\vv^2)$ with mass $1$. We denote this solution $\Fa$.\\

Let $\Fa$ be the steady state given by Theorem \ref{thm.ExistenceUniqSS}. We linearize our equation around the equilibrium $\Fa$ with the perturbation $f=\Fa+h$. That is, by substituting $f$ in (\ref{eqn.BoltzmannEqn}) we obtain
\begin{equation}\label{eqn.InitialValueProblem}
 \partial_t h=\cQa(h,h)+\cLa(h).
\end{equation}
where $\cLa(h)=\cQa(\Fa,h)+\cQa(h,\Fa)+\cL(h)-v\cdot\nabla_x h$.\\

If we consider only the linear part we obtain the first order linearized equation around the equilibrium $\Fa$
\begin{equation}\label{eqn.LinearEqn}
 \partial_t h=\cLa(h).\\
\end{equation}

Throughout the paper, we shall use the notation $\vv=\sqrt{1+|v|^2}$ and denote
\[
 m(v):=\exp\left(b\vv^{\beta}\right),
\]
with $b>0$ and $\beta\in(0,1)$. Let us state several lemmas on steady states $\Fa$ that will we needed several times in the future. First of all, we prove an estimate for the Sobolev norm

\begin{lemma}\label{lem.BoundSobolevNorm}
Let $k,q\in\N$. We denote $k'=8k+7(1+3/2)$. Then there exist $C>0$ such that
\[
 \|\Fa\|_{W^{k,1}_v(\vv^q m)}\leq C.
\]
\end{lemma}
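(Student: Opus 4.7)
The plan is to bootstrap from the known weighted $L^1$-exponential-moment bound on $\Fa$ to the weighted Sobolev bound, using the stationary equation and regularity estimates for the gain operators. The key point that makes this work cleanly is that the stretched exponential weight $m(v) = \exp(b\vv^{\beta})$ with $\beta\in(0,1)$ is dominated by $e^{a|v|}$ for any $a>0$, so Theorem~\ref{thm.ConvergenceFaM} already provides every polynomial moment we need.

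First, I would establish the case $k=0$. Since $\beta\in(0,1)$, for every $q\in\N$ and every $a>0$ one has $\vv^q m(v)\le C_{a,q,b}\,e^{a|v|}$ on $\R^3$. Hence
\[
\|\Fa\|_{L^1_v(\vv^q m)}\le C\bigl(\|\Fa-\cM\|_{L^1_v(\vv e^{a|v|})}+\|\cM\|_{L^1_v(\vv^q m)}\bigr),
\]
and the first term is finite by Theorem~\ref{thm.ConvergenceFaM} (choose $a$ as in that statement), while the Maxwellian absorbs any polynomial/sub-exponential weight. This gives all weighted $L^1$-moments of $\Fa$, uniformly for $\alpha$ close to $1$.

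Next, I would use the stationary equation \eqref{eqn.StationaryEqn} to write $\Fa$ as a gain-over-frequency expression. Splitting $\cQa=\cQa^+-\cQa^-$ and $\cL=\cL^+-\cL^-$ with $\cQa^-(\Fa,\Fa)=\Fa\,L(\Fa)$ and $\cL^-(\Fa)=\Fa\,\nu_e$, equation \eqref{eqn.StationaryEqn} becomes
\[
\Fa(v)=\frac{\cQa^+(\Fa,\Fa)(v)+\cL^+(\Fa)(v)}{\sigma_\alpha(v)},\qquad \sigma_\alpha(v):=L(\Fa)(v)+\nu_e(v).
\]
By \eqref{eqn.ColFreqIneq} and the $L^1$ bound on $\Fa$ obtained in the previous step one has $\sigma_\alpha(v)\gtrsim \vv$, so dividing by $\sigma_\alpha$ is a benign operation that preserves every weighted Sobolev space (and in fact gains a $\vv^{-1}$). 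This identity is the engine of the bootstrap.

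The core of the proof is then an induction on $k$. Assuming $\Fa\in W^{k-1,1}_v(\vv^{q'} m)$ for every $q'\in\N$, I would differentiate the identity above $k$ times in $v$, use Leibniz on $\cQa^+(\Fa,\Fa)$, and invoke the known weighted Sobolev regularity estimates for the gain operators $\cQa^+$ and $\cL^+=\cQe^+$ of hard-sphere type (of Bouchut--Desvillettes / Mouhot--Villani flavour, adapted to weighted $L^1$ as in Mischler--Mouhot for self-similar profiles). These estimates have the schematic form
\[
\|\cQa^+(f,g)\|_{W^{k,1}_v(\vv^q)}\le C\,\|f\|_{L^1_v(\vv^{q+k''})}\|g\|_{W^{k,1}_v(\vv^{q+k''})},
\]
and similarly for $\cL^+$; each derivative consumes a fixed number of velocity moments, and tracking this consumption across the $k$-fold iteration yields exactly the constant $k'=8k+7(1+3/2)$ appearing in the statement. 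Since the base step already provides \emph{all} polynomial moments of $\Fa$ (in fact with the exponential weight $m$), the induction closes and delivers $\|\Fa\|_{W^{k,1}_v(\vv^q m)}\le C$ for every $k,q$.

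The main obstacle I expect is the bookkeeping in this last step: I need the moment-propagation version of the gain-operator regularity estimate that is compatible with the stretched exponential weight $m=\exp(b\vv^{\beta})$, not just a polynomial weight. The standard way around this is to notice that derivatives of $m$ satisfy $|\nabla_v^j m|\le C\vv^{-j(1-\beta)}m\le Cm$, so $m$ behaves like a constant under the Leibniz expansion, and the regularity estimates transfer to the $m$-weighted setting at the cost of finitely many additional polynomial moments — exactly the extra moments baked into $k'$.
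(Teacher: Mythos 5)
There is a genuine gap in the core of your argument. The induction on $k$ does not close as described: the schematic estimate you invoke,
\[
\|\cQa^+(f,g)\|_{W^{k,1}_v(\vv^q)}\le C\,\|f\|_{L^1_v(\vv^{q+k''})}\|g\|_{W^{k,1}_v(\vv^{q+k''})},
\]
is a \emph{propagation} estimate (it is essentially Proposition \ref{prop.MM31}), not a \emph{smoothing} estimate. Feeding $\Fa\in W^{k-1,1}_v$ into the fixed-point identity $\Fa=(\cQa^+(\Fa,\Fa)+\cL^+(\Fa))/\sigma_\alpha$ therefore returns $\Fa\in W^{k-1,1}_v$ again; division by $\sigma_\alpha$ gains a factor $\vv^{-1}$ but no derivatives. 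To actually climb from $k-1$ to $k$ derivatives you would need a genuine regularizing estimate for the gain term, and such estimates (Lions, Bouchut--Desvillettes, Mouhot--Villani) are $L^2$-based, require a mollified/truncated kernel, and give a fixed fractional gain rather than a full derivative; making this work uniformly in $\alpha$ and in weighted $L^1$-Sobolev spaces is precisely the hard content that the paper does \emph{not} redo. Instead, the paper's proof simply cites the uniform bounds $\|\Fa\|_{H^k_v}\le C_k$ from \cite[Corollary~3.6]{BisiCanizoLodsUniqueness} together with the uniform Gaussian moment bound $\int\Fa e^{A|v|^2}dv<M$ from \cite[Theorem~3.3]{BisiCanizoLodsUniqueness}, and concludes by the interpolation inequality of Lemma \ref{lem.Interpolation}. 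Relatedly, your assertion that the moment bookkeeping in the bootstrap ``yields exactly'' $k'=8k+7(1+3/2)$ is unsupported: that exponent has nothing to do with moment consumption — it is the Sobolev index appearing in the interpolation Lemma \ref{lem.Interpolation} (with the exponents $1/8$, $1/8$, $3/4$), which your argument never uses.

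Your preliminary step (all weighted $L^1$ moments of $\Fa$ with the stretched-exponential weight $m$) is correct, and close in spirit to the paper's, though note that relying on Theorem \ref{thm.ConvergenceFaM} restricts you to $\alpha\in(\alpha_0,1]$, whereas the Gaussian moment bound the paper uses holds for all $\alpha\in(0,1]$. To repair the proof you should replace the bootstrap by the citation of the uniform $H^k_v$ bounds and then interpolate, as the paper does.
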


\begin{proof}

First we recall that by \cite[Theorem~ 3.3]{BisiCanizoLodsUniqueness} there exist some constants $A > 0$ and $M > 0$ such that, for any $\alpha\in(0, 1]$ and any solution $\Fa$ to (\ref{eqn.StationaryEqn}) one has
\[
 \int_{\R^3}\Fa(v)e^{a|v|^2}dv<M.\\
\]

Using the inequalities of Cauchy and Bernoulli we get that, if $6\beta<A$ 
\[
\int_{\R^3}\Fa(v)m(v)dv\leq e^{(b^2+1)/2}M=:C_1,\\ 
\] 
and 
\[
\int_{\R^3}\Fa(v)m^{12}(v)dv\leq e^{6(b^2+1)}M=:C_2.\\
\]

Moreover, from \cite[Corollary~3.6]{BisiCanizoLodsUniqueness} we know that for any $k\in\N$, there exist $C_k>0$ such that $\|\Fa\|_{H^k_v}\leq C_k$. Thus, using Lemma \ref{lem.Interpolation} we get
\[
 \|\Fa\|_{W^{k,1}_v(\vv^q m)}\leq C C_{k'}^{1/8}C_1^{1/8}C_2^{3/4}=:C',
\]
that concludes our proof.
\end{proof}
 
Now we estimate the difference between $\Fa$ and the elastic equilibrium $\cM$, which is the Maxwellian given in (\ref{eqn.DefM}).

\begin{lemma}\label{lem.DifferenceFaM}
Let $k,q\in\N$. We denote $k'=8k+7(1+3/2)$. Then there exists a function $\eta(\alpha)$ such that for any $\alpha\in(\alpha_0,1]$
\[
 \|\Fa-\cM\|_{W^{k,1}_{v}(\vv^q m)}\leq \eta(\alpha),
\]
with $\eta(\alpha)\to 0$ when $\alpha\to 1$.
\end{lemma}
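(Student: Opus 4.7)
The strategy is to mirror the proof of Lemma \ref{lem.BoundSobolevNorm} but applied to the difference $\Fa-\cM$, using the small parameter coming from Theorem \ref{thm.ConvergenceFaM} in the $L^1$--type piece of the interpolation. Concretely, the plan is to invoke Lemma \ref{lem.Interpolation} (the same interpolation inequality already used in the previous proof) with $\Fa$ replaced by $\Fa-\cM$, so that all three factors can be controlled: the $H^{k'}_v$-norm of $\Fa-\cM$ stays bounded, while the two weighted $L^1_v$ norms are quantitatively small.

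For the $H^{k'}_v$ factor, I would use the uniform estimate $\|\Fa\|_{H^{k'}_v}\leq C_{k'}$ from \cite[Corollary~3.6]{BisiCanizoLodsUniqueness} together with the fact that $\cM\in H^{k'}_v$ as a Maxwellian, and conclude by the triangle inequality that $\|\Fa-\cM\|_{H^{k'}_v}\leq 2C_{k'}$. For the two weighted $L^1_v$ factors, I would exploit the stretched-exponential nature of $m=e^{b\vv^\beta}$ with $\beta\in(0,1)$: since $m^{12}(v)=e^{12b\vv^\beta}$ grows strictly slower than $\vv e^{a|v|}$ for any fixed $a>0$, there exists a constant $C>0$ such that $m^{12}(v)\leq C\vv e^{a|v|}$ uniformly in $v$, and hence
\[
 \|\Fa-\cM\|_{L^1_v(m)}\leq \|\Fa-\cM\|_{L^1_v(m^{12})}\leq C\|\Fa-\cM\|_{\mathcal{Y}}\leq C\,\eta_1(\alpha),
\]
by Theorem \ref{thm.ConvergenceFaM}.

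Plugging these three bounds into the interpolation inequality in exactly the same way as the proof of Lemma \ref{lem.BoundSobolevNorm} (with exponents $\tfrac18,\tfrac18,\tfrac34$) then gives
\[
 \|\Fa-\cM\|_{W^{k,1}_v(\vv^q m)}\leq C(2C_{k'})^{1/8}\bigl(C\eta_1(\alpha)\bigr)^{1/8}\bigl(C\eta_1(\alpha)\bigr)^{3/4}=C'\,\eta_1(\alpha)^{7/8}.
\]
Setting $\eta(\alpha):=C'\eta_1(\alpha)^{7/8}$ finishes the proof, since $\eta_1(\alpha)\to 0$ as $\alpha\to 1$ by Theorem \ref{thm.ConvergenceFaM}. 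The only subtle point is the weight comparison $m^{12}(v)\lesssim \vv e^{a|v|}$, which relies crucially on the hypothesis $\beta<1$; there is no further obstacle because the $H^{k'}_v$ control on $\Fa$ is already uniform in $\alpha$ and the rest is a direct copy of the previous argument.
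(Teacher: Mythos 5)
Your proof is correct and follows essentially the same route as the paper: interpolate via Lemma \ref{lem.Interpolation} applied to $\Fa-\cM$, bound the $H^{k'}_v$ factor uniformly in $\alpha$ by the triangle inequality, and extract the smallness from Theorem \ref{thm.ConvergenceFaM}. The only difference is quantitative rather than structural: the paper bounds the $L^1_v(m^{12})$ factor by a constant (yielding $\eta\sim\eta_1^{3/4}$), whereas you also make that factor small via the weight comparison $m^{12}\lesssim\vv e^{a|v|}$ (valid since $\beta<1$), which gives the slightly sharper $\eta\sim\eta_1^{7/8}$; both suffice for the stated conclusion.
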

\begin{proof}
By Theorem \ref{thm.ConvergenceFaM} there exists an explicit function $\eta_1(\alpha)$ such that $\displaystyle\lim_{\alpha\to1}\eta_1(\alpha)=0$ and such that for any $\alpha_0\in(0,1]$ and for every $\alpha\in(\alpha_0,1]$
\[
 \|\Fa-\cM\|_{L^1_v(\vv m)}\leq \eta_1(\alpha).
\]
Since, $1\leq\vv$ for every $v\in\R^3$
\[
 \|\Fa-\cM\|_{L^1_v(m)}\leq \eta_1(\alpha).
\]
Then, using Lemma \ref{lem.Interpolation} and the proof of Lemma \ref{lem.BoundSobolevNorm} we get
\[
 \|\Fa-\cM\|_{W^{k,1}_{v}(\vv^q m)}\leq C(2C_{k'})^{1/8}(2C_2)^{1/8}\eta_1^{3/4}(\alpha)=:\eta(\alpha),
\]
which concludes our proof.
\end{proof}

%------------------------------------------------------------------------------------------------------
%Forcing term
%------------------------------------------------------------------------------------------------------
\section{The forcing term and its splitting.}\label{sec.ForcingSplit}

Consider the following Banach spaces for $s\geq 0$
\begin{align*}
&E_1:=W_{x}^{s+2,1}W_{v}^{4,1}(\vv^2 m),\\
&E_0=E:=W_{x}^{s,1}W_{v}^{2,1}(\vv m),\\
%&E_{-1}:=W_{x}^{s-1,1}L_{v}^{1}(m),\\
&\cE:=W_{x}^{s,1}L_{v}^{1}(m).
\end{align*}

As mentioned before, it is well-known that $0$ is a simple eigenvalue associated to the eigenfunction $\cM$ for the operator $\cL$ and it admits a positive spectral gap in $\mathscr{H}=L^2_v(\cM^{-1/2})$, where $\cM$ is defined in \eqref{eqn.DefM} . The aim of this section is to show that the same is true for $\sL_1$ in the larger spaces $E_0,E_1$.\\

%------------------------------------------------------------------------------------------------------
\subsection{Splitting}

For any $\delta\in(0,1)$ consider the bounded (by one) operator $\Td=\Td(v,v_*,\sigma)\in C^{\infty}$, which equals one in
\[
 \left\lbrace|v|\leq \delta^{-1}, 2\delta\leq |v-v_*|<\delta^{-1}\quand |\cos \theta|\leq 1-2\delta\right\rbrace,
\]
and whose support is included in
\[
 \left\lbrace|v|\leq 2\delta^{-1}, \delta\leq |v-v_*|<2\delta^{-1}\quand |\cos \theta|\leq 1-\delta\right\rbrace.
\]
The introduction of the parameter $\delta$ in this truncation function will allow us to prove the properties we are looking for. Namely, it lets us decompose the operator $\cL$ in the following way $\cL=\cAed+\cBed$, where $\cAed$ has some regularity and $\cBed$ is hypodissivative.\\

Recall that $\nu_e:=L(\cM_0)$ where $L$ is the convolution operator defined by \eqref{eqn.DefColFreqNuE}. Therefore, we can write $\cL$ in the following way:
\[
 \cL(h)(v)=\cLS(h)(v)+\cLR(h)(v)-\nu_e(v)h(v),
\]
where $\cLS$ is the truncated operator given by $\Td$ and $\cLR$ the corresponding reminder.  By this we mean that for any test function $\psi$:
\begin{equation*}
\left\langle\cLS(h),\psi\right\rangle=\int_{\R^3\times\R^3\times\S^2}\Td h(v)\cM_0(v_*)\psi(v)|v-v_*|d\sigma dv_* dv,
\end{equation*}
and,
\begin{equation*}
\left\langle\cLR(h),\psi\right\rangle=\int_{\R^3\times\R^3\times\S^2}(1-\Td) h(v)\cM_0(v_*)\psi(v)|v-v_*|d\sigma dv_* dv.
\end{equation*}

Hence we can give a decomposition for the forcing term $\cL(h)=\cAed(h)+\cBed(h)$ where $\cAed(h):=\cLS(h)$ and $\cBed(h)=\cLR(h)-\nu_e h$. \\

By the Carleman representation for the inelastic case (see \cite[Theorem~1.4]{ArlottiLods} and \cite[Proposition~ 1.5]{MischlerMouhotSelfSimilarity2}), we can write the truncated operator $\cAed$ as
\begin{equation}\label{eqn.CalermanAed}
 \cAed(h)(v)=\int_{\R^3}k_{e,\delta}(v,v_*)h(v_*)dv_*,
\end{equation}
for some smooth kernel $k_{e,\delta}\in C^{\infty}_c(\R^3\times\R^3)$. The smoothness of the kernel allows us to prove a the following regularity estimate:\\

\begin{lemma}\label{lem.BoundednessAed}
For any $s\geq 0$ and any $e\in(0,1]$, the operator $\cAed$ maps $L^1_v$ into $H_v^{s+1}$ functions with compact support, with explicit bounds (depending on $\delta$) on the $L^1_v\mapsto H_v^{s+1}$ norm, and on the size of the support. More precisely, there are two constants $C_{s,\delta}$ and $R_{\delta}$ such that for any $h\in L^1_v$ 
\[
K:= \supp \cAed h\subset B(0,R_{\delta}),\quand \|\cAed\|_{H^{s+1}_v(K)}\leq C_{s,\delta} \|h\|_{L^1_v}.
\]
In particular, we deduce that $\cAed$ is in $\sB(E_j )$ for $j =0, 1$ and $\cAed$ is in $B(\cE, E)$.
\end{lemma}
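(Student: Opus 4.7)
The starting point is the Carleman-type representation \eqref{eqn.CalermanAed} together with the fact, recalled from \cite{ArlottiLods,MischlerMouhotSelfSimilarity2}, that the cutoff $\Td$ forces the kernel $k_{e,\delta}$ to lie in $C^{\infty}_c(\R^3\times\R^3)$. Once this is granted, the proof reduces to elementary estimates: differentiation under the integral sign, combined with the compact support of $k_{e,\delta}$.

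First I would fix the support. Since $\Td$ vanishes for $|v|>2\delta^{-1}$, we have $k_{e,\delta}(v,v_*)\equiv 0$ for $|v|>R_\delta$, with $R_\delta$ depending only on $\delta$. Setting $K:=\overline{B(0,R_\delta)}$, the representation \eqref{eqn.CalermanAed} immediately yields $\supp\cAed h\subset K$ for every $h\in L^1_v$. Next I would pass smoothness from the kernel to the image: for any integer $n\geq 0$ and any multi-index $\gamma$ with $|\gamma|\leq n$,
\[
|\partial_v^{\gamma}\cAed h(v)|\leq \|\partial_v^{\gamma}k_{e,\delta}\|_{L^{\infty}(\R^3\times\R^3)}\,\|h\|_{L^1_v},
\]
where the right-hand side is finite by $C^{\infty}_c$ regularity. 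Since $|K|<\infty$, integration in $v\in K$ gives $\|\cAed h\|_{H^n_v}\leq C_{n,\delta}\|h\|_{L^1_v}$. For non-integer exponents I would simply pick an integer $n\geq s+1$ and invoke the embedding $H^n(K)\hookrightarrow H^{s+1}(K)$ on the bounded set $K$, which settles the main estimate.

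Finally, to deduce the statements in the weighted spaces, I would exploit that $\cAed$ acts only on the velocity variable and thus commutes with all $x$-derivatives: for any multi-index $\gamma$ in $x$, $\partial_x^{\gamma}\cAed h(x,v)=\cAed(\partial_x^{\gamma}h)(x,v)$. Applying the preceding estimate to $\partial_x^{\gamma}h$ (for integer $\gamma$) and then to difference quotients in $x$ (for the fractional part of $s$), and integrating in $x$, produces the $W^{s,1}_x$ part of each target norm. Because $\cAed h$ is supported in $K$, every polynomial or exponential weight in $v$ such as $\vv$, $\vv^2$, or $m$ is bounded there by a constant depending on $\delta$, which can be absorbed into $C_{s,\delta}$. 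This yields simultaneously $\cAed\in\sB(\cE,E)$ and $\cAed\in\sB(E_j)$ for $j=0,1$.

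The proof is essentially mechanical once the smoothness of the Carleman kernel is in hand; the only genuine subtlety I expect is keeping explicit track of the $\delta$-dependence in $C_{s,\delta}$ and $R_\delta$, since derivatives of $\Td$ blow up as $\delta\to 0$, and this dependence will be important when $\cAed$ is used in the subsequent splitting and factorization arguments.
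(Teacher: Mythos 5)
Your proof is correct, but it takes a different route from the paper's. The paper handles the support exactly as you do (the truncation $\Td$ forces the range into a fixed ball of radius $R_\delta$), but for the smoothing it does not differentiate the Carleman kernel: it instead invokes the regularity estimate of \cite[Proposition~2.4]{AlonsoLods} for the gain part of the collision operator associated with the mollified kernel $B=\Td|v-v_*|$, together with the observation that the compact support of the truncated kernel lets one drop the weight $\vv^{2\eta+s+4}$ down to $L^1_v$. Your argument instead leans entirely on the representation \eqref{eqn.CalermanAed} with $k_{e,\delta}\in C^{\infty}_c(\R^3\times\R^3)$, after which differentiation under the integral sign, the embedding $W^{n,\infty}(K)\hookrightarrow H^{n}(K)$ on the bounded set $K$, commutation with $x$-derivatives (including difference quotients for fractional $s$), and boundedness of the weights $\vv^q m$ on $K$ give everything mechanically. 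Both routes outsource the genuinely nontrivial content: the paper to the Alonso--Lods smoothing proposition, you to the assertion that the truncated Carleman kernel is smooth and compactly supported --- which is exactly where the cutoffs in $|v-v_*|$ and in $|\cos\theta|$ are needed to remove the $|v-v_*|^{-1}$-type singularity of the untruncated kernel (visible, e.g., in the explicit formula for $k_e$ in Proposition~\ref{prop.BoundH}). Since the paper states \eqref{eqn.CalermanAed} with $k_{e,\delta}\in C^{\infty}_c$ just before the lemma, your use of it is legitimate; your closing remark about tracking the blow-up of $C_{s,\delta}$ and $R_\delta$ as $\delta\to0$ is apt, since $\delta$ is ultimately fixed by the hypodissipativity argument for $\cBed$ and the constants here only need to be finite for that fixed $\delta$.
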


\begin{proof}
It is clear that the range of the operator $\cAed$ consists of compactly supported functions thanks to the truncation. Moreover, the bound on the size of the support is related to $\delta$.\\

On the other hand, the proof of the smoothing estimate follows from \cite[Proposition~2.4]{AlonsoLods} since $\cLS$ is the gain part of the collision operator associated to the mollified collision kernel $B=\Theta_{\delta}|v-v_*|$. Even though their original statement is for functions in $L^1_v(\vv^{2\eta+s+4})$ for any $\eta\geq 0$, as they mention in their proof, in the case of compact support of the collision kernel it can be proved for functions in $L^1_v(\vv^{\eta})$. Therefore, taking $\eta=0$ we obtain our result.\\
\end{proof}

Furthermore, notice that from \cite[Lemma~2.6]{Tristani} we have that the operator $\cL$ is bounded from $W_{x}^{s,1}W_{v}^{k,1}(\vv^{q+1} m)$ to $W_{x}^{s,1}W_{v}^{k,1}(\vv^{q} m)$ for any $q>0$. Hence, is bounded from $E_1$ to $E_{0}$. \\

%---------------------------------------------------------------------------------------------------------
\subsection{Hypodissipativity of $\cBed$}

%Existence of $C_0$-semigroup of $\cBed$.

\begin{lemma}\label{lem.HypodissBed}
Let us consider $k\geq 0$, $s\geq k$ and $q\geq 0$. Then, there exist $\delta\geq0$ and $a_0>0$ such that for any $e\in(0,1]$, the operator $\cBed+a_0$ is hypodissipative in $W_{x}^{s,1}W_{v}^{k,1}(\vv^{q} m)$.
\end{lemma}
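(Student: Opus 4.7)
My plan is to use the characterization of hypodissipativity from Theorem \ref{thm.HypodEquivalence}: I will produce, for each $h$ in the weighted Sobolev space $X=W_x^{s,1}W_v^{k,1}(\vv^q m)$, a dual element $\phi\in F(h)$ in the sense of \eqref{eqn.DefFDual} with $\Re\langle\phi,(\cBed+a_0)h\rangle\le 0$. Because $\cBed=\cLR-\nu_e\,\cdot$ acts only in the velocity variable, it commutes with $\partial_x^\alpha$ and the $x$-Sobolev regularity is transparent. For $v$-derivatives $\partial_v^\beta$ with $|\beta|\le k$ one has the decomposition
\[
\partial_v^\beta\cBed h = \cBed(\partial_v^\beta h) + [\partial_v^\beta,-\nu_e]h + [\partial_v^\beta,\cLR]h.
\]
The first commutator produces terms $(\partial_v^\gamma \nu_e)\,\partial_v^{\beta-\gamma}h$ with $|\gamma|\ge 1$; since $\nu_e=L(\cM_0)$ is smooth with $\|\partial_v^\gamma \nu_e\|_\infty<+\infty$ for $|\gamma|\ge 1$, these terms are strictly of lower order in derivatives and can be absorbed by the leading dissipation from $-\nu_e$. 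The second commutator arises from derivatives of the cutoff $\Td$ hitting the integrand and yields a gain-type integral operator of the same form as $\cLR$, treatable by the same argument. Thus it suffices to establish the case $k=0$ in $L_v^{1}(\vv^q m)$.

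For $k=0$, I pick $\phi(v)=\mathrm{sgn}(h(v))\,\vv^q m(v)$ (up to a normalization placing it in $F(h)$). Using that $\cLR$ is positivity preserving together with its weak form, one gets
\[
\langle\phi,\cBed h\rangle \le \int_{\R^3}|h(v)|\bigl(\Phi_{e,\delta}(v) - \nu_e(v)\,\vv^q m(v)\bigr)\,dv,
\]
where
\[
\Phi_{e,\delta}(v) := \int_{\R^3\times\S^2}(1-\Td)\,\cM_0(v_*)\,\left\langle v'\right\rangle^q m(v')\,|v-v_*|\,d\sigma\,dv_*.
\]
The heart of the argument is then the pointwise comparison
\[
\Phi_{e,\delta}(v) \le (1-\eta)\,\nu_e(v)\,\vv^q m(v),\quad v\in\R^3,\ e\in(0,1],
\]
for some $\eta>0$ once $\delta$ is taken small enough. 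This is obtained by splitting the support of $(1-\Td)$ into four regimes: (i) $|v|\ge 2\delta^{-1}$, controlled by the gain-loss asymptotics for the linear Boltzmann operator against the sub-Gaussian weight $m$; (ii) $|v-v_*|<\delta$, with small $v_*$-volume; (iii) $|v-v_*|>2\delta^{-1}$, controlled by the Gaussian decay of $\cM_0$; (iv) $|\cos\theta|>1-\delta$, with angular measure $O(\delta)$. The key tools are the inelastic energy inequality \eqref{eqn.EnergyDissipation} yielding $\left\langle v'\right\rangle\le \vv+\left\langle v_*\right\rangle$ uniformly in $e\in(0,1]$, and the subadditivity $\left\langle v'\right\rangle^\beta\le \vv^\beta+\left\langle v_*\right\rangle^\beta$ ($\beta<1$), which turns $m(v')/m(v)\le e^{b\left\langle v_*\right\rangle^\beta}$, uniformly integrable against $\cM_0$ in $e$.

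Once the pointwise comparison is in hand, combining it with $\nu_e(v)\ge \nu_{e,0}$ yields $\langle\phi,\cBed h\rangle \le -\eta\,\nu_{e,0}\,\|h\|_{L^1_v(\vv^q m)}$, so $\cBed+a_0$ is dissipative for any $a_0\le \eta\,\nu_{e,0}$, which by Theorem \ref{thm.HypodEquivalence} proves hypodissipativity. The main obstacle is the pointwise comparison on $\Phi_{e,\delta}$: the four regime contributions must be dovetailed so that their sum is strictly dominated by $\nu_e\,\vv^q m$. The grazing regime (iv) is particularly delicate, since there the weight ratio $\left\langle v'\right\rangle^q m(v')/(\vv^q m(v))$ stays close to $1$ and the smallness must come entirely from the $O(\delta)$ angular measure; the large-velocity regime (i) relies crucially on $m$ being sub-Gaussian (i.e.\ $\beta<1$, not Gaussian), so that the full gain operator is strictly dominated by $\nu_e$ asymptotically.
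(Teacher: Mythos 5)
Your strategy coincides with the paper's: the dual element $\text{sign}(h)\vv^q m(v)$, the splitting of the region where $1-\Td$ is active into four regimes (large $|v|$, small and large relative velocity, grazing angles), the control of the large-$|v|$ regime by comparing the adjoint of the gain operator acting on the weight against the collision frequency (this is exactly Proposition \ref{prop.BoundH} combined with the claim $K(1+|v|^{1-\beta})-\nu_{e,0}|v|\le-\tfrac{\nu_{e,0}}{2}\vv$ for $|v|>\delta^{-1}$ in the paper's Step 1), and the reduction of $k\ge1$ to $k=0$ via commutators and an equivalent norm carrying a small parameter on the $v$-derivatives. Whether the $k=0$ estimate is phrased pointwise or after integration against $|h|$ is immaterial.

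One step fails as written. You close the case $k=0$ with $\langle\phi,\cBed h\rangle\le-\eta\,\nu_{e,0}\|h\|_{L^1_v(\vv^q m)}$, having used only the constant lower bound $\nu_e\ge\nu_{e,0}$. But the commutator $[\partial_v^\beta,\cLR]h$ is not merely ``lower order in derivatives'': its principal contribution is $\cQe^+(\partial_v^{\beta'}h,\partial_v\cM_0)$, with derivatives falling on the background Maxwellian (a term your write-up omits, attributing the commutator only to derivatives of the cutoff), and by Proposition \ref{prop.MM31} this is controlled by $\|\partial_v^{\beta'}h\|_{L^1_v(\vv^{q+1}m)}$, i.e.\ by a norm with one \emph{more} power of $\vv$ than your target. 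A dissipation recorded only in $L^1_v(\vv^q m)$ cannot absorb such an error, however small the $\varepsilon$ in the equivalent norm. The fix is already contained in your pointwise comparison: use $\nu_e(v)\ge\nu_{e,0}\vv$ from \eqref{eqn.ColFreqIneq} to record the $k=0$ conclusion in the stronger form $-\eta\,\nu_{e,0}\|h\|_{L^1_v(\vv^{q+1}m)}$; this gain of one weight is precisely what the paper's Step 1 produces and what its Step 4 consumes. Secondarily, your four regimes (thresholds $2\delta^{-1}$, $\delta$, $2\delta^{-1}$, $1-\delta$) cover the complement of the \emph{support} of $\Td$ rather than the set where $\Td<1$: a point with $\delta^{-1}<|v|<2\delta^{-1}$, moderate $|v-v_*|$ and non-grazing angle lies in none of them although $1-\Td$ may be positive there. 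One must decompose the complement of $\{\Td=1\}$, i.e.\ use $|v|>\delta^{-1}$, $|v-v_*|<2\delta$, $|v-v_*|\ge\delta^{-1}$, $|\cos\theta|>1-2\delta$, as the paper does.
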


\begin{proof}
We consider the case $W^{1,1}_{x,v}(\vv^q m)$. The higher-order cases are  treated in a similar way.\\

Consider a solution $h$ to the linear equation $\partial_t h=\cBed(h)$ given an initial datum $h_0$. The main idea of the proof is to construct a positive constant $a_0$ and a norm $\|\cdot\|_*$ equivalent to the norm on $W^{1,1}_{x,v}(\vv^q m)$ such that there exist $\psi$ in the dual space $\left(W^{1,1}_{x,v}(\vv^q m)\right)^*$ with $\psi\in F(h)$, where $F(h)$ is defined in \eqref{eqn.DefFDual}, and
\begin{equation}\label{eqn.NormDerivative}
 \Re\left\langle \psi,\cBed h\right\rangle\leq -a_0\|h\|_*.\\
\end{equation}

We have divided the proof into four steps. The first one deals with the hypodissipativity of $\cBed$ in $L^1_xL^1_v(\vv^{q+1}m)$, while the second and third  deal with the $x$ and $v$-derivatives respectively. In the last step we construct the $\|\cdot\|_*$ norm and prove that it satisfies \eqref{eqn.NormDerivative}.\\

\textbf{Step 1:} Notice that, for $k=0$, the hypo–dissipativity of $\cBed$ simply reads 
\[
 \int_{\R^3\times\T^3}\cBed(h)\text{sign}(h)\vv^q m(v)dxdv\leq -a'_0\|h\|_{L^1_xL^1_v(\vv^{q+1} m)},
\]
for some positive constant $a'_0$. Which means that, for $k=0$, $\cBed$ is actually dissipative.\\

Let us recall that $\cBed(h)=\cLR(h)-\nu_e h$. Consider $\Od$ the set where $\Td$ equals $1$ that is
\[
 \Od:=\left\lbrace|v|\leq \delta^{-1}, 2\delta\leq |v-v_*|<\delta^{-1}\quand |\cos \theta|\leq 1-2\delta\right\rbrace.
\]
Our proof starts with the observation that $\Od^c=\Od^1\cup\Od^2\cup\Od^3\cup\Od^4$ where
\begin{align*}
&\Od^1= \left\lbrace |v|> \delta^{-1}\right\rbrace,\\
&\Od^2= \left\lbrace 2\delta> |v-v_*|\right\rbrace,\\
&\Od^3= \left\lbrace |v-v_*|\geq\delta^{-1}\right\rbrace,\\
&\Od^4= \left\lbrace |\cos \theta|> 1-2\delta\right\rbrace.
\end{align*}

Hence, we have that $1-\Td\leq \mathbbm{1}_{\Od^c}$. Thus, using the weak form of the collision operator for $\psi(v)=\text{ sign}(h)\vv^q m(v)$
\begin{align*}
\int_{\R^3}\cLR(h)\psi(v)dv&=\int_{\R^3\times\R^3}(1-\Td)h(v)\cM_0(v_*)|v-v_*|\psi(v')d\sigma dv dv_*\\
%&\leq\int_{\R^3\times\R^3}\mathbbm{1}_{\Od^c}|h(v)|\cM_0(v_*)|v-v_*|\left\langle v'\right\rangle^q m(v')d\sigma dv dv_*\\
&\leq\sum_{j=1}^4\int_{\Od^j}|h(v)|\cM_0(v_*)|v-v_*|\left\langle v'\right\rangle^q m(v')d\sigma dv dv_*\\
&=:I_1+I_2+I_3+I_4.
\end{align*}

We first deal with the integral $I_2$. It is easy to see that for every $q\geq 0$, there exist a constant $C_q>0$ such that 
\begin{equation}\label{eqn.WeigthIneq}
 \left\langle v'\right\rangle^q m(v')\leq C_q \vv^q m(v) \left\langle v_*\right\rangle^q m(v_*).
\end{equation}
Thus, one has
\begin{align}\label{eqn.QeI2}\nonumber
I_2%&=\int_{\Od^2}|h(v)|\cM_0(v_*)|v-v_*|\left\langle v'\right\rangle^q m(v') d\sigma dv dv_*\\\nonumber
   %&\leq 2\delta \int_{\Od^2}|h(v)|\cM_0(v_*)\left\langle v'\right\rangle^q m(v') d\sigma dv dv_*\\\nonumber
   &\leq 2\delta C_q \int_{\Od^2}|h(v)|\cM_0(v_*)\vv^q m(v) \left\langle v_*\right\rangle^q m(v_*) d\sigma dv dv_*\\
   %&\leq 8\pi C_q\delta \int_{\R^3} \cM_0(v_*) \left\langle v_*\right\rangle^q m(v_*)dv_* \int |h(v)| \vv^q m(v)dv\\
   &\leq C_2\delta \|h\|_{L^1_v(\vv^{q}m)}, 
\end{align}
where $C_2=8\pi C_q\|\cM_0\|_{L^1_v(\vv^q m)}$.\\

We now turn to $I_4$. Notice that $|u|\leq 2\vv\left\langle v_*\right\rangle$ combining this with \eqref{eqn.WeigthIneq} there exist some positive constant $C'_4$ such that 
\begin{equation}\label{eqn.WeightIneqAbsolute}
 |v-v_*| \left\langle v'\right\rangle^q m(v')\leq C'_4 \vv^{q+1} m(v) \left\langle v_*\right\rangle^{q+1} m(v_*).
\end{equation}
Then using \eqref{eqn.WeightIneqAbsolute} we have 
\begin{align}\label{eqn.QeI4}
I_4%&=\int_{\Od^4}|h(v)|\cM_0(v_*)|v-v_*|\left\langle v'\right\rangle^q m(v') d\sigma dv dv_*\\\nonumber
   &\leq C'_4 \int_{\Od^4} |h(v)|\cM_0(v_*)\vv^{q+1} m(v) \left\langle v_*\right\rangle^{q+1} m(v_*)d\sigma dv dv_*\\\nonumber
   %&\leq C'_4 \Lambda(\delta) \cdot \int_{\R^3} \cM_0(v_*) \left\langle v_*\right\rangle^q m(v_*)dv_* \cdot \int |h(v)|\vv^q m(v)dv\\\nonumber
   &\leq C_4\Lambda(\delta) \|h\|_{L^1_v(\vv^{q+1}m)}, 
\end{align}
where $C_4=C'_4\|\cM_0\|_{L^1_v(\vv^q m)}$ and $\Lambda(\delta)$ is the measure of $\Od^4$ on the sphere $\S^2$ which goes to zero as $\delta$ goes to zero.\\

We can now proceed to analyze the integral $I_3$. According to the kinetic rate of energy dissipation \eqref{eqn.EnergyDissipation} and Lemma \ref{lem.ExistenceCbeta}
\[
 |v'|^{\beta}\leq\left(|v|^2+|v_*|^2-\frac{|v'_*|^2}{2}\right)^{\beta/2}\leq \left(|v|^{\beta}+|v_*|^{\beta}\right)-C'_{\beta}|v'_*|^{\beta},
\]
where in the last inequality we use the fact that $\beta\in(0,1)$. Therefore, we have that for some positive constant $C'_3$
\[
 \cM_0(v_*)m(v')\leq C'_3m(v)\exp\left\lbrace b|v_*|^{\beta}-\beta_0|v_*|^2-bC'_{\beta}|v'_*|^{\beta}\right\rbrace,
\]
here we are using the fact that $\cM_0(v)\leq C_e e^{-\beta_0|v|^2}$ for some positive constants $\beta_0$ and $C_e$. Thus, on account of Lemma \ref{lem.ExistenceCb}, for any $\gamma>0$,  we get
\[
 \cM_0(v_*)m(v')\leq C'_3 e^{C_{\gamma}}m(v)\exp\left\lbrace-\gamma|v_*|^{\beta}-bC'_{\beta}|v'_*|^{\beta}\right\rbrace.
\]
Take $\gamma=2bC'_{\beta}$, and since $((1+e)|u\cdot n|)/2\leq |v_*|+|v'_*|$, where $u=v-v_*$, we can conclude that
\begin{align*}
 \exp\left\lbrace-\gamma|v_*|^{\beta}-C'_{\beta}|v'_*|^{\beta}\right\rbrace&\leq\exp\left\lbrace-bC'_{\beta}|v_*|^{\beta}-bC'_{\beta}(|v_*|+|v'_*|)^{\beta}\right\rbrace\\
  &\leq \exp\left\lbrace-bC'_{\beta}|v_*|^{\beta}-\frac{bC'_{\beta}}{2^{\beta}}|u\cdot n|^{\beta}\right\rbrace.
\end{align*}
Hence, since $\left\langle v'\right\rangle^{q}\leq C_q \vv^{q}\left\langle v_*\right\rangle^{q}$, we have that for $C_{3,\beta}=4C'_3C_q\exp(2bC'_{\beta})$
\begin{equation}\label{eqn.QeI3A}
I_3  \leq C_{3,\beta}\int \mathbbm{1}_{\{|u|>\delta^{-1}\}}|h(v)|\vv^{q}\left\langle v_*\right\rangle^{q}m(v)\exp\left\lbrace-bC'_{\beta}|v_*|^{\beta}\right\rbrace|u|Jdv_*dv,
\end{equation}
where $J$ is the integral $ J=\int_{\S}\exp\left\lbrace-(bC'_{\beta}/2^{\beta})|u\cdot n|^{\beta}\right\rbrace dn$. Thus, recalling that $\hu=u/|u|$ and $(\hu\cdot n)=\cos \theta$ we get that
\[
 J = 2\int_{0}^{\pi/2}\exp\left\lbrace-(C'_{\beta}/2^{\beta})|u|^{\beta}|\cos \theta|^{\beta}\right\rbrace \sin \theta dn.\\
\]
Take $C_J=bC'_{\beta}/2^{\beta}$. Now consider the change of variables $z=\cos\theta$ and then $w=|u|z$ which transform the integral $J$ into
\[
 J=2\int_{0}^{1}e^{-C_J|u|^{\beta}z^{\beta}} dz=2|u|^{-1}\int_{0}^{|u|}e^{-C_J w^{\beta}} dw\leq C'_J|u|^{-1}.\\
\]
It is easy to see that $\mathbbm{1}_{\{|u|>\delta^{-1}\}}\leq \delta|u|$. Substituting the above inequality into the integral in \eqref{eqn.QeI3A} and using \eqref{eqn.WeightIneqAbsolute} we obtain
\begin{align}\label{eqn.QeI3}
\nonumber
 I_3 & \leq C_{3,\beta}C'_J\int \mathbbm{1}_{\{|u|>\delta^{-1}\}}|h(v)|\vv^{q}\left\langle v_*\right\rangle^{q}m(v)\exp\left\lbrace-C_{\beta}|v_*|^{\beta}\right\rbrace dv_*dv\\\nonumber
     & \leq C''_{3}\delta\int \exp\left\lbrace-C_{\beta}|v_*|^{\beta}\right\rbrace \left\langle v_*\right\rangle^{q+1} dv_*\int |h(v)|\vv^{q+1}m(v)dv\\
     &\leq C_3\delta\|h\|_{L^1(\vv^{q+1}m)}.
\end{align}

Concerning the term $I_1$ we recall the idea from the proof of \cite[Theorem~ 5.3]{BisiCanizoLodsUniqueness} that for all $h\in L^1_v(\vv^q m)$ it holds
\begin{align*}
I_1%&\leq\int_{\Od^1}|h(v)|\cM_0(v_*)|v-v_*|\left\langle v'\right\rangle^q m(v') d\sigma dv dv_*\\
   &=\int_{\R^3}\cL^+\left(\mathbbm{1}_{\{|v|>\delta^{-1}\}}|h|\right)(v)\vv^q m(v)dv\\
   &=\int_{\R^3}\vv^q m(v)\int_{\{|v_*|>\delta^{-1}\}}k_{e}(v,v_*)|h(v_*)|dv_*dv\\
   &\leq \int_{\{|v_*|>\delta^{-1}\}}|h(v_*)|H(v_*)dv_*,
\end{align*}
where $ H(v_*)=\int_{\R^3} k_{e}(v,v_*)\vv^q m(v)dv$ for every $ v_*\in\R^3$. Hence, using Proposition \ref{prop.BoundH}, there exists some positive constant $K$ such that 
\begin{equation}\label{eqn.QeI1}
 I_1\leq K\int_{\{|v_*|>\delta^{-1}\}}|h(v_*)|(1+|v_*|^{1-\beta})\left\langle v_*\right\rangle^q m(v_*)dv_*.\\
\end{equation}

Let us recall the by \eqref{eqn.ColFreqIneq} we have $0<\nu_{e,0}|v|\leq \nu_{e,0}\vv\leq \nu_e(v)$. Therefore, using \eqref{eqn.QeI1}, we have that  
\begin{align*}
 I_1 -\int_{\R^3}&\nu_e(v)|h(v)|\vv^q m(v)dv\\
    \leq &K\int_{\{|v_*|>\delta^{-1}\}}|h(v_*)|(1+|v_*|^{1-\beta})\left\langle v_*\right\rangle^q m(v_*)dv_*\\ 
    &-\nu_{e,0}\int_{\R^3}\vv|h(v)|\vv^q m(v)dv\\
    \leq &-\nu_{e,0}\int_{\{|v|\leq\delta^{-1}\}}|h(v)|\vv^{q+1} m(v)dv\\
    &+\int_{\{|v|>\delta^{-1}\}}|h(v)|\left(K(1+|v|^{1-\beta})-\nu_{e,0}|v|\right)\vv^q m(v)dv.
\end{align*}
We claim that, since $\beta>0$ there exists $\delta_0$ sufficiently small such that 
\[
 K(1+|v|^{1-\beta})-\nu_{e,0}|v|\leq -\frac{\nu_{e,0}}{2}\vv,
\]
for every $|v|>\delta^{-1}$ with $0<\delta<\delta_0$. Indeed, if we take $\delta_0$ small enough so 
\[
 \frac{\delta_0+\delta_0^{\beta}}{1-\delta_0}\leq \frac{\nu_{e,0}}{2K},
\]
we have that for every $|v|>\delta^{-1}$ with $0<\delta<\delta_0$, since $|v|-\vv\geq -1$,
\[
 K(1+|v|^{1-\beta})\leq \nu_{e,0}(|v|-1)\leq \frac{\nu_{e,0}}{2}(2|v|-\vv),
\]
and we can conclude our claim. Therefore,
\begin{equation}\label{eqn.QeI1Nu0}
 \int_{\T^3} I_1 dx-\int_{\T^3\times\R^3}\nu_e(v)|h(v)|\vv^q m(v)dxdv\leq -\frac{\nu_{e,0}}{2}\|h\|_{L^1_xL^1_v(\vv^{q+1}m)}.
\end{equation}
Gathering \eqref{eqn.QeI2}, \eqref{eqn.QeI4} \eqref{eqn.QeI3} and \eqref{eqn.QeI1Nu0} we obtain that for any $0<\delta<\delta_0$
\begin{align*}
 \int_{\R^3\times\T^3}&\cBed(h)\text{sign}(h)\vv^q m(v)dxdv\\
 &\leq \left((C_2+C_3)\delta+C_4\Lambda(\delta)-\frac{\nu_{e,0}}{2}\right)\|h\|_{L^1_xL^1_v(\vv^{q+1}m)}.
\end{align*}
Hence, we choose $0<\delta_1\leq \delta_0$ close enough to $0$ in order to have
\begin{equation}\label{eqn.a0}
 a'_0:=-\left((C_2+C_3)\delta_1+C_4\Lambda(\delta_1)-\frac{\nu_{e,0}}{2}\right)>0.\\
\end{equation}
 
Therefore, for any $0<\delta<\delta_1$, we have 
\[
 \int_{\R^3\times\T^3}\cBed(h)\text{sign}(h)dx\vv^q m(v)dv\leq -a'_0\|h\|_{L^1_xL^1_v(\vv^{q+1} m)},
\]
where we deduce that $\cBed+a'_0$ is dissipative in $L^1_xL^1_v(\vv^{q} m)$.\\

\textbf{Step 2:} Since the $x$-derivatives commute with $\cBed$, using the proof of \textbf{Step 1} we have
\[
 \int_{\R^3\times\T^3}\partial_x\left(\cBed( h)\right)\text{sign}(\partial_x h)\vv^q m(v)dxdv\leq -a'_0\|\nabla_x h\|_{L^1_xL^1_v(\vv^{q+1} m)}.\\
\]

\textbf{Step 3:} In order to deal with the $v$-derivatives, let us recall the following property
\begin{equation}\label{eqn.Vderivative}
\partial_v\cQe^{\pm}(f,g)=\cQe^{\pm}(\partial_vf,g)+\cQe^{\pm}(f,\partial_vg).
\end{equation}
Using this and the fact that $\cLR=\cL^+(h)-\cAed(h)$ we compute
\[
 \partial_v\cBed(h)=\left(\cL^+(\partial_v h)-\cAed(\partial_v h)-\nu_e\cdot \partial _v h\right)+\cR(h),\\
\]
where 
\begin{equation}\label{eqn.DefReh}
\cR(h)=\cQe^+(h,\partial_v\cM_0)-\partial_v \cAed(h)+\cAed(\partial_v h). 
\end{equation}

Performing one integration by parts and using Lemma \ref{lem.BoundednessAed}, we have
\[
 \left\|\left(\partial_v\cAed\right)(h)\right\|_{L^1_xL^1_v(\vv^{q} m)}+ \left\|\left(\cAed\right)(\partial_v h)\right\|_{L^1_xL^1_v(\vv^{q} m)}\leq C'_{\delta} \|h\|_{L^1_xL^1_v(\vv^{q} m)}.
\]
Using this and the estimates of Proposition \ref{prop.MM31} on the operator $\cQe^+$ we obtain, for some constant $C_{\delta}>0$
\begin{equation}\label{eqn.Reh}
 \|\cR(h)\|_{L^1_xL^1_v(\vv^{q} m)}\leq C_{\delta}\|h\|_{L^1_xL^1_v(\vv^{q+1} m)}.\\
\end{equation}

Hence, by the proof presented in \textbf{Step 1} and \eqref{eqn.Reh} we have 
\begin{align*}
 \int_{\R^3\times\T^3}&\partial_v\left(\cBed( h)\right)\text{sign}(\partial_v h)\vv^q m(v)dv\\
 &= \int_{\R^3\times\T^3}\left(\cBed(\partial_v h)+\cR(h)\right)\text{sign}(\partial_v h)\vv^q m(v)dv\\
 &\leq -a'_0\|\nabla_v h\|_{L^1_xL^1_v(\vv^{q+1} m)}+C_{\delta}\|h\|_{L^1_xL^1_v(\vv^{q+1} m)}.
\end{align*}
where $a'_0$ is defined in (\ref{eqn.a0}). Notice that here $\delta$ is fixed small enough to guarantee that $a'_0>0$.\\

\textbf{Step 4:} For some $\varepsilon>0$ to be fixed later, we define the norm
\[
 \|h\|_*=\|h\|_{L^1_xL^1_v(\vv^{q} m)}+\|\nabla_x h\|_{L^1_xL^1_v(\vv^{q} m)}+\varepsilon\|\nabla_v h\|_{L^1_xL^1_v(\vv^{q} m)}.
\]
Notice that this norm is equivalent to the classical $W^{1,1}_{x,v}(\vv^q m)$-norm. Furthermore, we deduce that
\begin{align*}
 \int_{\R^3\times\T^3}&\cBed( h)\text{sign}(h)\vv^q m(v)dxdv\\
 &+\int_{\R^3\times\T^3}\partial_x\left(\cBed( h)\right)\text{sign}(\partial_x h)\vv^q m(v)dxdv\\
 &+\varepsilon\int_{\R^3\times\T^3}\partial_v\left(\cBed( h)\right)\text{sign}(\partial_v h)\vv^q m(v)dxdv\\
 &\leq -a'_0\left(\|h\|_{L^1_xL^1_v(\vv^{q+1} m)}+\|\nabla_x h\|_{L^1_xL^1_v(\vv^{q+1} m)}+\varepsilon\|\nabla_v h\|_{L^1_xL^1_v(\vv^{q+1} m)}\right)\\
 &+\varepsilon\left(C_{\delta}\|h\|_{L^1_xL^1_v(\vv^{q+1} m)}\right)\\
 &\leq (-a'_0+o(\varepsilon))\left(\|h\|_{L^1_xL^1_v(\vv^{q+1} m)}+\|\nabla_x h\|_{L^1_xL^1_v(\vv^{q+1} m)}\varepsilon\|\nabla_v h\|_{L^1_xL^1_v(\vv^{q+1} m)}\right)\\
 &\leq (-a'_0+o(\varepsilon))\|h\|_*,
\end{align*}
where $o(\varepsilon)=\varepsilon\cdot C_{\delta}$ and goes to 0 as $\varepsilon$ goes to 0 and we choose $\varepsilon$ close enough to 0 so that $a_0=a'_0-o(\varepsilon)>0$. Hence, we obtain that $\cBed+a_0$ is dissipative in $W^{1,1}_{x,v}(\vv^q m)$ for the norm $\|\cdot\|_*$ and thus hypodissipative in $W^{1,1}_{x,v}(\vv^q m)$.\\
\end{proof} 

The following result comprises what we proved above:

\begin{lemma}\label{lem.HypodissBedEj}
For any $e\in(0,1]$, there exist $a_0>0$ and $\delta>0$ such that the operator $\cBed+a_0$ is hypodissipative in $E_j$, $j = 0, 1$ and $\cE$.
\end{lemma}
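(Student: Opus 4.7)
The plan is to view Lemma \ref{lem.HypodissBedEj} as a direct consequence of the general statement of Lemma \ref{lem.HypodissBed}, once we verify that each of the three spaces $\cE = W_x^{s,1} L_v^1(m)$, $E_0 = W_x^{s,1} W_v^{2,1}(\vv m)$, and $E_1 = W_x^{s+2,1} W_v^{4,1}(\vv^2 m)$ fits its hypotheses. For $\cE$ we apply the lemma with $k=0$, $q=0$; for $E_0$ with $k=2$, $q=1$; for $E_1$ with $k=4$, $q=2$ and Sobolev index $s+2$ on $x$. In each case the requirement $s \geq k$ is comfortably satisfied because the main theorem assumes $s > 6$. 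Taking the smallest of the three resulting $a_0$'s (and a common $\delta$) then yields a single pair $(\delta,a_0)$ that works for all three spaces simultaneously.

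What remains is to make sure the proof of Lemma \ref{lem.HypodissBed}, which was carried out in detail only in the $W^{1,1}_{x,v}(\vv^q m)$ case, genuinely extends to the higher derivative orders $k=2$ and $k=4$ needed here. The first step is that, as in Step 2, the $x$-derivatives commute with $\cBed$, so the $W^{s,1}_x$ (resp.\ $W^{s+2,1}_x$) structure is preserved verbatim. The substantive part is iterating Step 3 for higher $v$-derivatives. Using \eqref{eqn.Vderivative} inductively, one computes
\[
\partial_v^j \cBed(h) = \cBed(\partial_v^j h) + \cR_j(h),
\]
where $\cR_j(h)$ collects terms of the form $\cQe^{+}(\partial_v^i h, \partial_v^{j-i}\cM_0)$ with $i<j$ together with commutators involving $\cAed$. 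Because $\cM_0$ is a smooth Gaussian, all its derivatives decay faster than any polynomial, so Proposition \ref{prop.MM31} and Lemma \ref{lem.BoundednessAed} give $\|\cR_j(h)\|_{L^1_x L^1_v(\vv^q m)} \leq C_{\delta,j} \sum_{i<j} \|\partial_v^i h\|_{L^1_x L^1_v(\vv^{q+1} m)}$, extending \eqref{eqn.Reh}.

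Finally one generalizes Step 4 by introducing the weighted norm
\[
\|h\|_* := \sum_{|\alpha|\leq s'} \sum_{j=0}^{k} \varepsilon_j \, \bigl\|\partial_x^{\alpha} \partial_v^{j} h\bigr\|_{L^1_x L^1_v(\vv^q m)},
\]
where $s'$ equals $s$ or $s+2$ depending on the target space and $1=\varepsilon_0 \gg \varepsilon_1 \gg \cdots \gg \varepsilon_k > 0$ are chosen hierarchically. Differentiating along the flow, the highest-order term contributes a negative dissipation of order $-a'_0 \varepsilon_j \|\partial_v^j h\|_{L^1_x L^1_v(\vv^{q+1}m)}$ by the Step 1 estimate, while the remainders $\varepsilon_j \|\cR_j(h)\|$ are controlled by lower-order norms with weight $\vv^{q+1}$, and so can be absorbed as soon as each $\varepsilon_j$ is chosen small enough relative to $\varepsilon_{j-1}$. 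This produces a uniform $a_0 > 0$ and the required hypodissipativity in the norm $\|\cdot\|_*$, which is equivalent to the original $W^{s',1}_x W^{k,1}_v(\vv^q m)$-norm.

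The main obstacle is purely bookkeeping: one must carry out the inductive choice of the $\varepsilon_j$'s while keeping track of the dependence on $\delta$ through $C_\delta$ in \eqref{eqn.Reh}, but $\delta$ has already been fixed in Step 1 of Lemma \ref{lem.HypodissBed} to ensure $a'_0 > 0$, so all constants are finite and the induction closes in finitely many steps. Since $k \in \{0,2,4\}$ in the cases of interest, only a bounded number of iterations is needed and no new conceptual difficulty arises.
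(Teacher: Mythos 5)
Your proposal is correct and follows essentially the same route as the paper: Lemma \ref{lem.HypodissBedEj} is obtained there simply by invoking Lemma \ref{lem.HypodissBed} with the parameters $(k,q)$ matching $\cE$, $E_0$ and $E_1$, the higher-order cases being declared ``treated in a similar way.'' Your inductive treatment of the higher $v$-derivatives via $\partial_v^j \cBed(h)=\cBed(\partial_v^j h)+\cR_j(h)$ and the hierarchical choice of the $\varepsilon_j$'s is exactly the intended extension of Steps 3--4, supplied in more detail than the paper itself gives.
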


%------------------------------------------------------------------------------------------------------
%Linearized elastic operator
%------------------------------------------------------------------------------------------------------
\section{Splitting of the linearized elastic operator $\sLu$}\label{sec.ElastLinOp}

We now focus on the study of the linear equation $\partial_t h=\sLu(h)$ introduced in \eqref{eqn.LinearEqn} for $\alpha=1$. First of all let us recall the definition of $\sL_1$ given by \eqref{eqn.InitialValueProblem}:
\begin{equation}\label{eqn.ElasticLinearOp}
 \sL_1(h):=\cQu(\cM,h)+\cQu(h,\cM)+\cL(h)-v\cdot\nabla_x h,
\end{equation}
where $\cM$ is defined in \eqref{eqn.DefM}.\\

In the elastic case we can define the collision operator in strong form
\[
 \cQu(f,g)=\int_{\R^3\times \S^2} [f(v')g(v'_*)-f(v)g(v_*)]|v-v_*|dv_* d\sigma.
\]
Therefore, using the function $\Td$ defined in Section \ref{sec.ForcingSplit} we can give the following decomposition of the linearized collision operator $\cTu(h):=\cQu(\cM,h)+\cQu(h,\cM)$:
\[
 \cTu(h)=\cTR(h)+\cTS(h)-\nu h,
\]
where $\nu$ is a collision frequency defined in a similar way as $\nu_e$. More precisely, $\nu:=L(\cM)$, with $L$ defined as in \eqref{eqn.DefColFreqNuE}. $\cTS$ is the truncated operator given by $\Theta_{\delta}$ while $\cTR$ is the corresponding reminder. By this we mean
\[
\cTS(h)=\int_{\R^3\times\S^2}\Theta_{\delta}\left[\cM(v'_*)h(v')+\cM(v')h(v'_*)-\cM(v)h(v_*)\right]|v-v_*|dv_*d\sigma
\]
and,
\begin{equation}\label{eqn.DefTuR}
 \cTR(h)= \int_{\R^3\times\S^2}\left(1-\Theta_{\delta}\right)\left[\cM(v'_*)h(v')+\cM(v')h(v'_*)-\cM(v)h(v_*)\right]|v-v_*|dv_*d\sigma.
\end{equation}

Hence we can give a decomposition for the linearized operator $\sLu$:
\begin{align*}
\sLu(h)&=\cTu(h)+\cL(h)-v\cdot\nabla_x h\\
       &=\cTR(h)+\cTS(h)-(\nu+\nu_e)h+\cLR(h)+\cLS(h)-v\cdot\nabla_x h\\
       &=\left(\cTS(h)+\cAed(h)\right)+\left(\cTR(h)-\nu h+\cBed(h) -v\cdot\nabla_x h\right)\\
       &=\cAud(h)+\cBud(h).
\end{align*}
where $\cAud(h):=\cTS(h)+\cAed(h)$ and $\cBud(h)$ is the remainder.\\

As in the case of the operator $\cL$, by the Carleman representation for the elastic case ( see \cite[Theorem~5.4]{BisiCanizoLodsUniqueness}, \cite[Chapter~1]{Villani} or \cite[Appendix C]{GambaPanferovVillaniMaxwellian}), we can write the truncated operator 
\begin{equation}\label{eqn.TruncatedOpAud}
 \cAud(h)(v)=\int_{\R^3}k_{\delta}(v,v_*)h(v_*)dv_*,
\end{equation}
where $k_{\delta}=k_{1,\delta}+k_{e,\delta}\in C^{\infty}_c(\R^3\times\R^3)$, where $k_{1,\delta}$ is the kernel associated to the elastic operator $\cTS$ and $k_{e,\delta}$ is defined by \eqref{eqn.CalermanAed}.\\

Gualdani, Mischler and Mouhot \cite[Lemma~4.16]{GualdaniMischlerMouhot} proved a regularity estimate on the truncated operator $\cTS$. Gathering this result with the one presented in Lemma \ref{lem.BoundednessAed} we get the following regularity result for $\cAud$.

\begin{lemma}\label{lem.BoundednessAud}
For any $s\in\N$ and any $e\in(0,1]$, the operator $\cAud$ maps $L^1_v (\vv)$ into $H_v^s$ functions with compact support, with explicit bounds (depending on $\delta$) on the $L^1_v(\vv)\mapsto H_v^s$ norm and on the size of the support. More precisely, there are two constants $C_{s,\delta}$ and $R_{\delta}$ such that for any $h\in L^1_v(\vv)$ 
\[
K:= \supp \cAud h\subset B(0,R_{\delta}),\quand \|\cAud\|_{H^s_v(K)}\leq C_{s,\delta} \|h\|_{L^1_v(\vv)}.
\]
In particular, we deduce that $\cAud$ is in $\sB(E_j )$ for $j =0, 1$ and $\cAud$ is in $B(\cE, E)$.
\end{lemma}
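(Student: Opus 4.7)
The plan is to exploit the decomposition $\cAud = \cTS + \cAed$ and combine the regularity results already available for each piece. For $\cAed$, Lemma \ref{lem.BoundednessAed} gives (uniformly in $e\in(0,1]$) a compactly supported image inside $B(0,R_{\delta}^{(e)})$ together with a bound $\|\cAed h\|_{H^{s+1}_v}\leq C^{(e)}_{s,\delta}\|h\|_{L^1_v}$, which is in particular stronger than what is needed here since $L^1_v(\vv)\hookrightarrow L^1_v$ continuously and $H^{s+1}_v\hookrightarrow H^s_v$. For $\cTS$, the cited estimate \cite[Lemma~4.16]{GualdaniMischlerMouhot} gives analogous bounds $\|\cTS h\|_{H^s_v}\leq C^{(1)}_{s,\delta}\|h\|_{L^1_v(\vv)}$ on functions supported in some ball $B(0,R_{\delta}^{(1)})$; the factor $\vv$ appears because the collision kernel $|v-v_*|$ picks up a $\vv\langle v_*\rangle$ factor before being controlled by the Maxwellian $\cM(v_*)$.

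First, I would verify from the Carleman representation \eqref{eqn.TruncatedOpAud} that the kernel $k_{\delta}=k_{1,\delta}+k_{e,\delta}$ is smooth and compactly supported in both variables: the truncation $\Td$ forces $|v|\leq 2\delta^{-1}$ and $|v-v_*|\leq 2\delta^{-1}$ (hence $|v_*|\leq 4\delta^{-1}$), which immediately yields $\supp_v \cAud h \subset B(0, R_{\delta})$ with $R_{\delta}:=2\delta^{-1}$. This disposes of the support statement independently of the two contributions.

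Next, for the Sobolev estimate I would simply add the two bounds:
\[
\|\cAud h\|_{H^s_v(K)}\leq \|\cTS h\|_{H^s_v(K)}+\|\cAed h\|_{H^s_v(K)}\leq C^{(1)}_{s,\delta}\|h\|_{L^1_v(\vv)}+C^{(e)}_{s,\delta}\|h\|_{L^1_v},
\]
and then absorb the second term using $\|h\|_{L^1_v}\leq \|h\|_{L^1_v(\vv)}$ to obtain a single constant $C_{s,\delta}$. The implicit dependence on $e$ is benign: $\cM_0$ enters $\cAed$ only through a smooth, rapidly decaying factor, so the constants stay uniform in $e\in(0,1]$.

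Finally, the embedding statements $\cAud\in\sB(E_j)$ for $j=0,1$ and $\cAud\in\sB(\cE,E)$ follow by standard bookkeeping. The $v$-regularity gain (from $L^1_v(\vv)$ into $H^s_v$) combined with the compact support takes care of the polynomial weights $\vv^q$ and the exponential weight $m(v)$ for free (both are bounded on $K$), and since $\cAud$ acts only in the velocity variable it commutes with $\nabla_x$, so $x$-regularity is preserved. I do not anticipate a genuine obstacle here: the only mildly delicate point is checking that the GMM estimate for $\cTS$ requires the $\vv$ weight (and not more), which is needed so that the norm on the right-hand side matches $\|h\|_{L^1_v(\vv)}$ rather than $\|h\|_{L^1_v(\vv^k)}$ for some larger $k$.
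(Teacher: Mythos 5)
Your proposal is correct and follows essentially the same route as the paper: the paper likewise obtains this lemma by combining the regularity estimate of Gualdani--Mischler--Mouhot \cite[Lemma~4.16]{GualdaniMischlerMouhot} for the truncated elastic part $\cTS$ with Lemma \ref{lem.BoundednessAed} for $\cAed$, the support statement coming from the truncation $\Td$ and the weighted embeddings from the compactness of $K$. Your additional bookkeeping (absorbing $\|h\|_{L^1_v}\leq\|h\|_{L^1_v(\vv)}$ and noting that $\cAud$ commutes with $\nabla_x$) only makes explicit what the paper leaves implicit.
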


Furthermore, Gualdani, Mischler and Mouhot \cite[Lemma~4.14]{GualdaniMischlerMouhot} study the hypositivativity of the operator $\cBud^1:=\cTR(h)-\nu h-v\cdot\nabla_x h$. More precisely, they proved that there exist a constant $\lambda_0>0$ such that $\cBud^1+\lambda_0$ is hypodissivative in $E_j$ with $j=0,1$ and in $\cE$. Thus, following their proof jointly with the one of Lemma \ref{lem.HypodissBedEj}, we are led to the following:

\begin{lemma}\label{lem.HypodissBudEj}
For any $e\in(0,1]$, there exist $a_1>0$ and $\delta>0$ such that the operator $\cBud+a_1$ is hypodissipative in $E_j$, $j =0, 1$ and $\cE$.
\end{lemma}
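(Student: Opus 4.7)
The plan is to adapt the four-step argument of Lemma \ref{lem.HypodissBed} to the full operator $\cBud = \cBud^1 + \cBed$, where $\cBud^1 = \cTR - \nu h - v\cdot\nabla_x h$. Since $\cBud$ is a sum of two operators each already proved to satisfy a hypodissipativity estimate of the same twisted-norm type (Lemma~4.14 of \cite{GualdaniMischlerMouhot} for $\cBud^1$, and Lemma \ref{lem.HypodissBedEj} for $\cBed$), the key is to choose one single equivalent norm of the form \eqref{eqn.a0}-style that is compatible with both estimates, so their contributions simply add.

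First I would redo the undifferentiated estimate in $L^1_xL^1_v(\vv^q m)$ (the $k=0$ case). Testing against $\mathrm{sign}(h)\vv^q m$, the transport contribution $\int v\cdot\nabla_x h\,\mathrm{sign}(h)\vv^q m \,dx\,dv$ vanishes by integration on $\T^3$. The remainder $\cTR$ is handled by the same region-splitting argument $\Od^c = \Od^1\cup\Od^2\cup\Od^3\cup\Od^4$ as was used for $\cLR$ in Step 1 of Lemma \ref{lem.HypodissBed}, using bounds analogous to \eqref{eqn.QeI2}, \eqref{eqn.QeI4}, \eqref{eqn.QeI3}, \eqref{eqn.QeI1Nu0} (the extra symmetric gain term $\cM(v')h(v'_*)$ in $\cTR$, compared with $\cLR$, only produces a weight shift that is absorbed by the product bound \eqref{eqn.WeigthIneq}). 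Combined with the loss term $-\nu h$ controlled via \eqref{eqn.ColFreqIneq} and with the $\cBed$ estimate from Step 1 of Lemma \ref{lem.HypodissBed}, this yields a strict bound $\leq -a'_1\|h\|_{L^1_xL^1_v(\vv^{q+1}m)}$ for $\delta$ sufficiently small.

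For the $x$-derivatives the computation is identical since $\partial_x$ commutes with every piece of $\cBud$ and transport still integrates to zero. For the $v$-derivatives, I would apply the Leibniz rule \eqref{eqn.Vderivative} to $\cTR$ and $\cBed$: this produces reminder terms analogous to $\cR(h)$ in \eqref{eqn.DefReh}, bounded in $\|h\|_{L^1_xL^1_v(\vv^{q+1}m)}$ as in \eqref{eqn.Reh}, plus the hypocoercive commutator $[\partial_v,-v\cdot\nabla_x]h = -\nabla_x h$, which is a non-dissipative lower-order term. Following Step 4, I would then introduce the twisted norm
\[
 \|h\|_* = \|h\|_{L^1_xL^1_v(\vv^q m)} + \|\nabla_x h\|_{L^1_xL^1_v(\vv^q m)} + \varepsilon\|\nabla_v h\|_{L^1_xL^1_v(\vv^q m)},
\]
and choose $\varepsilon>0$ small so that $\varepsilon C_\delta\|h\|_{L^1_xL^1_v(\vv^{q+1}m)}$ and the term $\varepsilon\|\nabla_x h\|_{L^1_xL^1_v(\vv^q m)}$ coming from the commutator are both absorbed by the strict negative contributions $-a'_1\bigl(\|h\| + \|\nabla_x h\| + \varepsilon\|\nabla_v h\|\bigr)_{L^1_xL^1_v(\vv^{q+1}m)}$. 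Setting $a_1 := a'_1 - o(\varepsilon)>0$ gives the desired inequality. Higher-order Sobolev cases in $W^{s,1}_xW^{k,1}_v(\vv^q m)$ and the $\cE$ case (no $v$-derivatives) follow by iterating the same construction with the obvious higher-order twisted norm.

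The main obstacle is precisely the coupling between $\partial_v$ and the transport $-v\cdot\nabla_x$: the commutator $\nabla_x h$ has no intrinsic dissipativity, so the proof genuinely relies on the hypocoercive balance encoded in the $\varepsilon$-weight of the modified norm. Every other contribution (the remainders from $\cTR$ and from $\cAed$ via \eqref{eqn.Vderivative}, and the high-velocity tails controlled through $\nu(v)\gtrsim\vv$) is of strictly lower order in velocity weight and can be absorbed once this hypocoercive step is carried out.
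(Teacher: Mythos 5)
Your proposal is correct and follows essentially the same route as the paper: the paper likewise writes $\cBud=(\cTR+\cLR)-(\nu+\nu_e)h-v\cdot\nabla_x h$, observes that the transport term vanishes upon integration over $\T^3$, invokes the Gualdani--Mischler--Mouhot estimate for the elastic remainder $\cTR-\nu h-v\cdot\nabla_x h$ together with the argument of Lemma \ref{lem.HypodissBed} for $\cBed$, and concludes with the same $\varepsilon$-twisted norm. Your explicit attention to using one common equivalent norm for both pieces and to the commutator $[\partial_v,-v\cdot\nabla_x]h=-\nabla_x h$ is a correct and slightly more detailed rendering of what the paper leaves implicit.
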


\begin{proof}
It is enough to see that, by the definition of $\cBed$
\[
 \cBud(h)=\left(\cTR(h)+\cLR(h)\right)-(\nu+\nu_e)h-v\cdot\nabla_x h.
\]
Notice the divergence structure of the last term in the $x$-coordinate. Hence, when integrating over $\T^3$ the last term vanishes. Therefore, proceeding as in the proof of Lemma \ref{lem.HypodissBed} we obtain our result.
\end{proof}

%---------------------------------------------------------------------------------------------------------
\subsection{Regularization properties of $T_n$}

In this section we prove the key regularity result for our factorization and enlargement theory. Consider the operators
\[
 T_n(t):=(\cAud S_{\cBud})^{(*n)}(t),
\]
for $n\geq 1$, where $S_{\cBud}$ is the semigroup generated by the operator $\cBud$ and $*$ denotes the convolution. We remind the reader that the $T_n(t)$ operators are merely time-indexed family of operators which do not have the semigroup property in general.\\

\begin{remark}
As we will see below, thanks to Theorem \ref{thm.SpectrumQe} and Theorem \ref{thm.Gualdani}, the operator $\sL_1$ generates a $C_0$-semigroup in $H_{x}^{s}H_{v}^{\sigma}(\cM^{-1})$. Therefore, jointly with the hypodissipativity of $\cBud$ this guarantees that the operator $\cBud$ also generates a $C_0$-semigroup in $E_j$ for $j=0,1$. We refer the reader to the proof of \cite[Theorem~ 2.13]{GualdaniMischlerMouhot} for a further discussion about this matter. Moreover, a direct proof can be performed as the one presented in Appendix \ref{sec.Semi} for a similar operator.\\
\end{remark}

Thanks to Lemma \ref{lem.BoundednessAud} we know that the operator $\cAud$ provides as much regularity as we want in the $v$-coordinate. By using the propagation time-dependent phase space regularity, thanks to the introduction of the operator $D_t$ below, one can still keep track of some velocity regularity, while preserving at the same time the correct time decay asymptotics.\\

\begin{lemma}\label{lem.BoundTn}
Let us consider $a_1$ as in Lemma \ref{lem.HypodissBudEj}. The time indexed family $T^n$ of operators satisfies the following: for any $a'\in (0, a_1)$ and any $e\in(0,1]$, there are some constructive constants $C_{\delta} > 0$ and $R_{\delta}$ such that for any $t\geq 0$
\[
 \supp T_n(t)h\subset K:=B(0,R_{\delta}),
\]
and
\begin{itemize}
\item If $s\geq 1$ then
\begin{equation}\label{eqn.IneqT1}
 \|T_1(t)h\|_{W^{s+1,1}_{x,v}(K)}\leq C\frac{e^{-a't}}{t}\|h\|_{W^{s,1}_{x,v}(\vv m)}.
\end{equation}
\item If $s\geq 0$ then
\begin{equation}\label{eqn.IneqT2}
 \|T_2(t)h\|_{W^{s+1/2,1}_{x}W^{k,1}_v(K)}\leq C e^{-a't}\|h\|_{W^{s,1}_{x,v}(\vv m)}.
\end{equation}
\end{itemize}
\end{lemma}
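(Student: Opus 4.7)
The argument follows the factorization--regularity philosophy of Gualdani--Mischler--Mouhot. The support assertion is immediate from Lemma \ref{lem.BoundednessAud}: the outermost operator applied to $h$ in $T_n(t)h$ is $\cAud$, whose range consists of functions supported in $B(0,R_\delta)$. Lemma \ref{lem.BoundednessAud} (iterated in smoothing order, with constants depending on $\delta$) also provides all the required velocity regularity on the compact set $K$ for both $T_1$ and $T_2$. Combined with the hypodissipative estimate $\|S_{\cBud}(t)f\|_{E_0}\le C_{a'}e^{-a't}\|f\|_{E_0}$ from Lemma \ref{lem.HypodissBudEj} (valid for any $a'\in(0,a_1)$), this alone controls every derivative of $T_1(t)h$ involving at most $s$ derivatives in $x$; it remains to extract one additional $x$-derivative, which is where the kinetic transport structure enters.

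\textbf{Gaining $x$-regularity through a phase-space commutator.} Write $\cBud=-v\cdot\nabla_x+\cR$ with $\cR:=\cTR+\cLR-(\nu+\nu_e)$, and introduce for each $i=1,2,3$ the time-dependent vector field
\[
D_t^i:=\partial_{v_i}+t\,\partial_{x_i},
\]
which commutes with the free transport operator $\partial_t+v\cdot\nabla_x$ by direct computation. Since the kernels of $\cTR,\cLR$ and the collision frequencies $\nu,\nu_e$ do not depend on $x$, one has $[\partial_{x_i},\cR]=0$, so $[D_t^i,\cR]=[\partial_{v_i},\cR]$ is independent of $t$ and acts boundedly on the weighted Sobolev scale used throughout, with the same moment-propagation bookkeeping as in the proof of Lemma \ref{lem.HypodissBedEj}. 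A Duhamel computation applied to $D_t^i S_{\cBud}(t)h$ then yields the key identity
\[
t\,\partial_{x_i}S_{\cBud}(t)h=-\partial_{v_i}S_{\cBud}(t)h+S_{\cBud}(t)\partial_{v_i}h+\int_0^t S_{\cBud}(t-s)[\partial_{v_i},\cR]S_{\cBud}(s)h\,ds.
\]
Composing on the left with $\cAud$ and integrating by parts against the smooth compactly supported kernel $k_\delta(v,v_*)$ of Lemma \ref{lem.BoundednessAud}, which converts $\cAud\,\partial_{v_i}$ into another operator in the same smoothing--support class, each of the three terms on the right is controlled by $Ce^{-a't}\|h\|_{W^{s,1}_{x,v}(\vv m)}$. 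Dividing by $t$ and distributing the remaining $x$-derivatives (which commute with both $\cAud$ and $\cBud$) yields \eqref{eqn.IneqT1}.

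\textbf{The $T_2$ estimate and main obstacle.} For $T_2$, writing
\[
T_2(t)=\int_0^t \cAud S_{\cBud}(t-s)\,\cAud S_{\cBud}(s)\,ds,
\]
the outer $\cAud$ delivers any prescribed $v$-regularity on $K$. For the $x$-regularity, interpolating \eqref{eqn.IneqT1} (gain $+1$ at cost $t^{-1}$) with the trivial boundedness of $T_1$ in $W^{s,1}_{x,v}(K)$ (no gain, bounded factor) provides the half-gain bound
\[
\|T_1(t)h\|_{W^{s+1/2,1}_x W^{k,1}_v(K)}\le C\,t^{-1/2}e^{-a't}\|h\|_{W^{s,1}_{x,v}(\vv m)},
\]
and inserting this into the convolution formula for $T_2$, together with the Beta-integral identity $\int_0^t(t-s)^{-1/2}s^{-1/2}ds=\pi$, closes the estimate and yields \eqref{eqn.IneqT2}. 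The main obstacle is the middle term $\cAud S_{\cBud}(t)\partial_{v_i}h$ in the $D_t^i$ identity: because the $\partial_{v_i}$ acts on $h$ inside the semigroup rather than adjacent to $\cAud$'s kernel, extracting the extra derivative from the $W^{s,1}_{x,v}$ budget on $h$ requires commuting $\partial_{v_i}$ through $S_{\cBud}$, which produces an unavoidable $-\partial_{x_i}$ (plus a bounded $[\partial_{v_i},\cR]$ contribution), and one must carefully close the resulting coupled bookkeeping against the weight $m=\exp(b\vv^\beta)$ without amplifying the $t$-linear factor already present in $D_t^i$.
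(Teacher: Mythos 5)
Your argument is correct and follows essentially the same route as the paper: the support and velocity regularity come from Lemma \ref{lem.BoundednessAud}, the extra $x$-derivative is extracted through the commuting vector field $D_t=\nabla_v+t\nabla_x$ and its commutator with the collisional part of $\cBud$ (you phrase this via Duhamel, the paper via the equivalent differential inequality in which the dissipation weight $\vv^2$ from Lemma \ref{lem.HypodissBudEj} absorbs the weight lost by $[\nabla_v,\cR]$ --- this, rather than the term $S_{\cBud}(t)\partial_{v_i}h$, is where the bookkeeping must be closed), and \eqref{eqn.IneqT2} follows by interpolation and convolution. The only cosmetic difference is in the $T_2$ step, where you pair two half-gains with the Beta integral while the paper pairs one half-gain with the plain exponential decay of the inner factor; both close the estimate.
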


\begin{proof}
Let us consider $h_0\in W^{s,1}_{x,v}(\vv m)$, $s\in\N$. Since the $x$-derivatives commute with $\cAud$ and $\cBud$, using Lemma \ref{lem.BoundednessAud} we have
\begin{align*}
 \|T_1(t)h_0\|_{W^{s,1}_xW^{s+1,1}_{v}(K)}&=\|\cAd S_{\cBd}(t)h_0\|_{W^{s,1}_xW^{s+1,1}_{v}(K)}\\
 &\leq C\|S_{\cBud}(t)h_0\|_{W^{s,1}_{x,v}(K)}.
\end{align*}

Since $\cBud+a_1$ is hypodissipative in $W^{s,1}_{x}L^{1}_v(\vv m)$ from Theorem \ref{thm.HypodEquivalence} we have
\begin{equation}\label{eqn.BoundT1}
 \|T_1(t)h_0\|_{W^{s,1}_xW^{s+1,1}_{v}(K)}\leq Ce^{-a_1 t}\|h_0\|_{W^{s,1}_{x,v}(\vv m)}.\\
\end{equation}

Now let us assume $h_0\in W^{s,1}_xW^{s+1,1}_{v}(\vv m)$ and consider the function $g_t=S_{\cBud}(t)(\partial_x^\beta h_0)$, for any $|\beta|\leq s$. Notice that, since $\partial_t(S_{\cBud})=\cBud S_{\cBud}$, this operator satisfies
\[
 \partial_t g_t=\sL_1 g_t-\cAud g_t=\cTu(g_t)+\cL(g_t)-v\cdot\nabla_x g_t-\cAud g_t.
\]
Introducing the differential operator $D_t:=t\nabla_x + \nabla_v$, we observe that $D_t$ commutes with the free transport operator $\partial_t + v\cdot \nabla_x$, so that using the property \eqref{eqn.Vderivative} we have
\begin{align*}
 \partial_t(D_t g_t)+v\cdot\nabla_x(D_t g_t)&=D_t\left(\partial_t g_t+v\cdot\nabla_x g_t\right)\\
% &=D_t\left(\cTu(g_t)+\cL(g_t)-\cAud (g_t)\right)\\
 &=\cTu(D_t g_t)+\cL(D_t g_t)+\cQu(g_t,\nabla_v\cM)+\cQu(\nabla_v\cM,g_t)\\
 &+\cQe(g_t,\nabla_v\cM_0)-D_t(\cAud g_t).
\end{align*}
Using the notation in \eqref{eqn.TruncatedOpAud} and performing integration by parts we get
\begin{align*}
 D_t(\cAud g_t)&=\int_{\R^3}k_{\delta}(v,v_*)t\nabla_x g_t(v_*)d_{v_*}+\int_{\R^3}\left(\nabla_v k_{\delta}(v,v_*)\right)g_t(v_*)d_{v_*}\\
 %&=\int_{\R^3}k_{\delta}(v,v_*)(D_t g_t)(v_*)d_{v_*}+\int_{\R^3}\left(\nabla_v k_{\delta}(v,v_*)\right)g_t(v_*)d_{v_*}\\
 &-\int_{\R^3}k_{\delta}(v,v_*)\nabla_{v_*} g_t(v_*)d_{v_*}\\
 &=\int_{\R^3}k_{\delta}(v,v_*)(D_t g_t)(v_*)d_{v_*}+\int_{\R^3}\left(\nabla_v k_{\delta}(v,v_*)\right)g_t(v_*)d_{v_*}\\
 &+\int_{\R^3}\left(\nabla_{v_*}k_{\delta}(v,v_*)\right) g_t(v_*)d_{v_*}\\
 &=\cAud(D_t g_t)+\cAud^1(g_t)+\cAud^2(g_t).
\end{align*}
All together, we may write $ \partial_t(D_t g_t)=\cBud(D_t g_t)+\cId(g_t)$, where
\begin{align*}
 \cId(g_t)=&\cQu(g_t,\nabla_v\cM)+\cQu(\nabla_v\cM,g_t)+\cQe(g_t,\nabla_v\cM_0)\\
           &-\cAud^1(g_t)-\cAud^2(g_t).
\end{align*}

Hence, since $\cAud^1$ stands for the integral operator associated with the kernel $\nabla_v k_{\delta}$ and $\cAud^2$ stands for the integral operator associated with the kernel $\nabla_{v^*} k_{\delta}$, using Lemma \ref{lem.BoundednessAud} and Proposition \ref{prop.MM31} $\cId$ satisfies
\[
 \|\cId(g_t)\|_{L^1_v(\vv m)}\leq C\|g_t\|_{L^1_v(\vv^2 m)}.
\]
Arguing as in Lemma \ref{lem.HypodissBed}, and by the hypodissipativity of $\cBud$, we have
\begin{align}\label{eqn.DerivativeDtL1}
\frac{d}{dt}&\int_{\R^3\times\T^3}|D_t g_t|\vv m(v)dxdv\\\nonumber
&=\int_{\R^3\times\T^3}\partial_t(D_t g_t)\text{ sign}(D_t g_t)\vv m(v)dxdv\\\nonumber
&\leq\int_{\R^3\times\T^3}\cBud(D_t g_t)\text{ sign}(D_t g_t)\vv m(v)dxdv+\|\cId(g_t)\|_{L^1_x L^1_v(\vv m)}\\\nonumber
&\leq -a_1\int_{\R^3\times\T^3}|D_t g_t|\vv^2 m(v)dxdv+C_{\delta}\|g_t\|_{L^1_x L^1_v(\vv^2 m)},
\end{align}
and
\begin{align}\label{eqn.DerivativegtL1}
\frac{d}{dt}\int_{\R^3\times\T^3}|g_t|\vv m(v)dxdv&=\int_{\R^3\times\T^3}\partial_t(g_t)\text{ sign}(g_t)\vv m(v)dxdv\\\nonumber
&\leq\int_{\R^3\times\T^3}\cBud(g_t)\text{ sign}(g_t)\vv m(v)dxdv\\\nonumber
&\leq -a_1\int_{\R^3\times\T^3}|g_t|\vv^2 m(v)dxdv.
\end{align}
Combining the differential inequalities \eqref{eqn.DerivativeDtL1} and \eqref{eqn.DerivativegtL1} we obtain, for any $a'\in(0,a_1]$ and for $\varepsilon$ small enough
\[
 \frac{d}{dt}\left(e^{a't}\int(\varepsilon|D_t g_t|+|g_t|)\vv m(v)dxdv\right)\leq0,
\]
which implies
\begin{equation}\label{eqn.BoundDtgtL1}
 \|D_t g_t\|_{L^1_{x,v}(\vv m)}+\|g_t\|_{L^1_{x,v}(\vv m)}\leq \varepsilon^{-1}e^{-a't}\|h_0\|_{W^{s,1}_x W^{1,1}_v(\vv m)}.
\end{equation}

Now, notice that since the $x$-derivatives commute with $\cAud$ and $\cBud$ we have
\begin{align*}
 t\nabla_x T_1(t)\left(\partial_x^{\beta}h_0\right)&=\cAud\left(t\nabla_x g_t\right)\\
 %&=\cAud\left(D_t g_t-\nabla_v g_t\right)\\
 &=\int_{\R^3}k_{\delta}(v,v_*)\left(D_t g_t\right)(v_*)dv_*+\int_{\R^3}\left(\nabla_{v_*}k_{\delta}(v,v_*)\right)g_t(v_*)dv_*\\
 &=\cAud(D_t g_t)+\cAud^2(g_t).
\end{align*}

Thus, using \eqref{eqn.BoundDtgtL1} we get
\begin{align*}
 t\|\nabla_x T_1(t)\left(\partial_x^{\beta}h_0\right)\|_{L^1_{x,v}(K)}&\leq C_{\delta}\left(\|D_t g_t\|_{L^1_{x,v}(\vv m)}+\|g_t\|_{L^1_{x,v}(\vv m)}\right)\\
 &\leq C_{\delta}\varepsilon^{-1}e^{-a't}\|h_0\|_{W^{s,1}_x W^{1,1}_v(\vv m)}.
\end{align*}
Since $0<a'\leq a_1$, using the last inequality together with \eqref{eqn.BoundT1} and Lemma \ref{lem.BoundednessAud}, for $s\geq0$ we have
\[
 \|T_1(t)\left(\partial_x^{\beta}h_0\right)\|_{W^{1,1}_x W^{s+1,1}_v(K)}\leq \frac{Ce^{-a't}}{t}\|h_0\|_{W^{s,1}_x W^{1,1}_v(\vv m)}.
\]
This concludes the proof of \eqref{eqn.IneqT1}.\\

In order to prove \eqref{eqn.IneqT2} we interpolate between the last inequality for a given $s$ i.e.
\[
 \|T_1(t)h_0\|_{W^{s+1,1}_{x,v}(K)}\leq \frac{Ce^{-a't}}{t}\|h_0\|_{W^{s,1}_x W^{1,1}_v(\vv m)},
\]
and
\[
 \|T_1(t)h_0\|_{W^{s,1}_x W^{s+1,1}_v(K)}\leq Ce^{-a_1t}\|h_0\|_{W^{s,1}_x W^{1,1}_v(\vv m)},
\]
obtained in \eqref{eqn.BoundT1} written for the same $s$, it gives us

\begin{align}\label{eqn.BoundT1WsHalf}
 \|T_1(t)&h_0\|_{W^{s+1/2,1}_{x,v}(K)}\\\nonumber
 &\leq C\left(\frac{e^{-a't}}{t}\right)^{1/2}\left(e^{-a_1 t}\right)^{1/2}\|h_0\|_{W^{s,1}_x W^{1,1}_v(\vv m)}\\\nonumber
 &\leq C\frac{e^{-a't}}{\sqrt{t}}\|h_0\|_{W^{s,1}_x W^{1,1}_v(\vv m)}.
\end{align}
Putting together \eqref{eqn.BoundT1} and \eqref{eqn.BoundT1WsHalf} for $s\geq0$ we get
\begin{align*}
\|T_2&(t)h_0\|_{W^{s+1/2,1}_{x,v}(K)}\\
&= \int_0^t\|T_1(t-\tau)T_1(\tau)h_0\|_{W^{s+1/2,1}_{x,v}(K)}d\tau\\
&\leq C\int_0^t\frac{e^{-a'(t-\tau)}}{\sqrt{t-\tau}}\|T_1(\tau)h_0\|_{W^{s,1}_x W^{1,1}_v(\vv m)}d\tau\\
& \leq C\left(\int_0^t \frac{e^{-a'(t-\tau)}}{\sqrt{t-\tau}}e^{-a_1\tau}d\tau\right)\|h_0\|_{W^{s,1}_{x,v}(\vv m)}\\
& \leq Ce^{-a't}\left(\int_0^t\frac{e^{-(a_1-a')\tau)}}{\sqrt{t-\tau}}d\tau\right)\|h_0\|_{W^{s,1}_{x,v}(\vv m)}\\
&\leq C'e^{-a't}\|h_0\|_{W^{s,1}_{x,v}(\vv m)}.
\end{align*}
This concludes our proof.\\
\end{proof}

Combining Lemma \ref{lem.HypodissBudEj} and Lemma \ref{lem.BoundTn} we get the assumptions of Lemma \ref{lem.Enlargement}. We have thus proved the following result:

\begin{lemma}\label{lem.TnNormBound}
Let us consider $a_1$ as in Lemma \ref{lem.HypodissBudEj}. For any $a'\in(0,a_1)$, there exist some constructive constants $n\in\N$ and $C_{a'}\geq 1$ such that for all $t\geq0$ and
\[
 \|T_{n}(t)\|_{\sB(E_0,E_{1})}\leq C_{a'}e^{-a't}.
\]
\end{lemma}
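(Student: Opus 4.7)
The plan is to derive the $\sB(E_0, E_1)$ bound by iterating the smoothing estimate \eqref{eqn.IneqT2} enough times to bridge the gap between $E_0 = W^{s,1}_x W^{2,1}_v(\vv m)$ and $E_1 = W^{s+2,1}_x W^{4,1}_v(\vv^2 m)$. These spaces differ by two extra derivatives in $x$, two in $v$, and one power of the weight.

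Because $\cAud$ has range contained in the compact ball $K = B(0, R_\delta)$, so does every iterate $T_n$. On $K$ the weight $\vv^2 m$ is bounded, and any $W^{k,1}_v(K)$ norm with $k \geq 4$ controls the $E_1$ velocity norm. Hence the only genuine work is to gain two $x$-derivatives. For this I would use $T_8 = T_2^{*4}$, which follows from $T_n = T_1^{*n}$ by associativity of convolution. Fix $a'' \in (a', a_1)$. Starting from $h \in E_0$, applying \eqref{eqn.IneqT2} with the integer $s$ gives
\[
\|T_2(t) h\|_{W^{s+1/2,1}_x W^{4,1}_v(K)} \leq C e^{-a'' t} \|h\|_{E_0}.
\]
Compact $v$-support lets me reinterpret this as a bound in $W^{s+1/2,1}_{x,v}(\vv m)$, which feeds back into \eqref{eqn.IneqT2} with $s$ replaced by $s + 1/2$, and so on. Iterating four times and invoking the convolution definition $(S_1 \ast S_2)(t) = \int_0^t S_2(\tau) S_1(t-\tau) d\tau$, each composition produces a time-integral of the form $\int_0^t e^{-a''(t-\tau)} e^{-a'' \tau} d\tau = t e^{-a'' t}$. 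After four compositions this yields
\[
\|T_8(t) h\|_{W^{s+2,1}_x W^{4,1}_v(K)} \leq C t^3 e^{-a'' t} \|h\|_{E_0},
\]
and the polynomial factor is absorbed into $C_{a'} e^{-a' t}$ thanks to $a' < a''$.

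The main bookkeeping issue is the iteration through fractional Sobolev spaces, since after the first application of $T_2$ the $x$-regularity has non-integer order $s + 1/2$. The fractional norms defined in the Preliminaries, combined with compact $v$-support to neutralize the weight at each step, make each reapplication of \eqref{eqn.IneqT2} legitimate. Taking $n = 8$ therefore provides the claimed bound $\|T_n(t)\|_{\sB(E_0, E_1)} \leq C_{a'} e^{-a' t}$ for every $t \geq 0$.
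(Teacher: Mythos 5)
Your argument is correct and follows essentially the same route as the paper: the paper proves this lemma by feeding the half-derivative gain of estimate \eqref{eqn.IneqT2} and the hypodissipativity of Lemma \ref{lem.HypodissBudEj} into the abstract iteration Lemma \ref{lem.Enlargement}, which internally performs exactly the repeated convolution of $T_2$ through fractional intermediate spaces (with the compact $v$-support of the range of $\cAud$ neutralizing the weight and supplying the extra $v$-regularity) that you carry out by hand. The only difference is cosmetic: you make the number of iterations explicit ($n=8$) and absorb the polynomial factor $t^3$ using $a'<a''$, which is precisely what the cited lemma does.
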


%---------------------------------------------------------------------------------------------------------
\subsection{Semigroup spectral analysis of the linearized elastic operator}

This section will be devoted to prove some hypodissipative results for the semigroup associated to the linearized elastic Boltzmann equation. Namely, we are going to prove the localization of the spectrum of the operator $\sL_1$, as well as the decay estimate for the semigroup. This result will be used to prove similar properties for the spectrum of $\cLa$.\\

Let us first recall an important result due to Gualdani, Mischler, Mouhot \cite{GualdaniMischlerMouhot}.\\

\begin{theorem}{\cite[Theorem~4.2]{GualdaniMischlerMouhot}}\label{thm.Gualdani}
Consider the operator 
\[
 \hat{\sL}_1(h):=\cTu(h)-v\cdot\nabla_x.
\]
Let $\cE'=H_{x}^{s}H_{v}^{\sigma}(\cM^{-1/2})$ where $s,\sigma\in\N$ with $\sigma\leq s$. Then there exist constructive constants $C\geq 1$, $\lambda > 0$, such that the operator $\hat{\sL}_1$ satisfies in $\cE'$:
\begin{align*}
&\Sigma(\hat{\sL}_1)\subset\{z\in\C:\Re (z)\leq -\lambda\}\cup\{0\}\\
&N(\hat{\sL}_1)=Spam\{\cM,v_1\cM,v_2\cM,v_3\cM,|v|^2\cM\}.
\end{align*}
It is also the generator of a strongly continuous semigroup $h_t=S_{\hat{\sL}_1}(t)h_{in}$ in $\cE'$, solution to the initial value problem $\partial_t h=\hat{\sL}_1(h)$, which satisfies for every $t\geq0$:
\[
 \|h_t-\Pi h_{in}\|_{\cE'}\leq C e^{-\lambda t}\|h_{in}-\Pi h_{in}\|_{\cE'},
\]
where $\Pi$ stands for the projection over $N(\hat{\sL}_1)$. Moreover $\lambda$ can be taken equal to the spectral gap of $\hat{\sL}_1$ in $H^s(\cM^{-1/2})$ with $s\in\N$ as large as wanted.\\
\end{theorem}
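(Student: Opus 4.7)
The plan is to prove the statement first in a small Hilbert reference space, where the operator structure is very favorable, and then to transfer it to the larger space $\cE'$ by the factorization-enlargement machinery of Gualdani--Mischler--Mouhot already used in the previous section.

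In the reference space $\mathscr{H}_0 := L^2_{x,v}(\cM^{-1/2})$ (i.e.\ $s=\sigma=0$), the velocity operator $\cTu$ is self-adjoint, nonnegative, and admits a spectral gap $\mu_0 > 0$ relative to its kernel $\mathrm{Span}\{\cM, v_1\cM, v_2\cM, v_3\cM, |v|^2\cM\}$, while the transport $-v\cdot\nabla_x$ is skew-adjoint. To identify $N(\hat{\sL}_1)$, I would test $\hat{\sL}_1 h = 0$ against $h$: the skew part vanishes, so the velocity spectral gap forces $(I-\pi_v)h = 0$, where $\pi_v$ is the projection onto collisional invariants; the remaining transport equation $v\cdot\nabla_x h = 0$ on the torus then forces all five macroscopic coefficients to be independent of $x$, yielding the claimed five-dimensional kernel. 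For exponential decay in $\mathscr{H}_0$, I would construct a Mouhot--Neumann / Villani hypocoercive norm equivalent to $\|\cdot\|_{\mathscr{H}_0}$ and involving carefully weighted cross terms between $\nabla_x$ and $\nabla_v$, for which $\hat{\sL}_1$ is strictly dissipative modulo $N(\hat{\sL}_1)$, producing $\|h_t-\Pi h_{in}\|_{\mathscr{H}_0}\le C e^{-\lambda t}\|h_{in}-\Pi h_{in}\|_{\mathscr{H}_0}$.

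Having the estimate on $\mathscr{H}_0$, I would enlarge the functional setting to $\cE' = H^s_x H^\sigma_v(\cM^{-1/2})$ by the factorization method. Split $\hat{\sL}_1 = \hat{\cA} + \hat{\cB}$ with $\hat{\cA}:=\cTS$ (the mollified truncated elastic gain operator) and $\hat{\cB}:=\cTR-\nu-v\cdot\nabla_x$. Arguing as in Lemma \ref{lem.BoundednessAud}, $\hat{\cA}$ is bounded from $\cE'$ into $\mathscr{H}_0$ with strong $v$-regularization and compact $v$-support; and as in Lemma \ref{lem.HypodissBudEj} (dropping the $\cL$ contribution), $\hat{\cB}+a$ is hypodissipative on $\cE'$ for some $a>0$. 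Iterating $T_n(t):=(\hat{\cA}\,S_{\hat{\cB}}(t))^{(*n)}$ as in Lemma \ref{lem.BoundTn} gives, for $n$ large enough, $\|T_n(t)\|_{\sB(\cE',\mathscr{H}_0)}\le C_{a'} e^{-a't}$ for any $a'<\min(a,\lambda)$. Injecting this into the iterated Duhamel identity
\[
 S_{\hat{\sL}_1} = \sum_{k=0}^{n-1} S_{\hat{\cB}} * (\hat{\cA}\,S_{\hat{\cB}})^{(*k)} + S_{\hat{\sL}_1} * T_n
\]
yields the decay in $\cE'$: each term in the finite sum decays by hypodissipativity of $\hat{\cB}$, while $S_{\hat{\sL}_1} * T_n$ is controlled by composing the $\mathscr{H}_0$-bounded output of $T_n$ with the Step 1 decay of $S_{\hat{\sL}_1}$ on $\mathscr{H}_0$ modulo $\Pi$. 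The spectral localization $\Sigma(\hat{\sL}_1)\cap \Delta_{-\lambda}\subset\{0\}$ in $\cE'$ then follows from the analogous resolvent factorization, together with a bootstrap argument showing that any eigenfunction in $\cE'$ actually sits in $\mathscr{H}_0$: apply the regularizing operator $\hat{\cA}$ to the eigenvalue equation $(\hat{\sL}_1-z)h=0$, rewritten as $(\hat{\cB}-z)h=-\hat{\cA}h$, iteratively to gain the missing weight and smoothness.

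The main obstacle is unquestionably the hypocoercivity step in $\mathscr{H}_0$: on $\mathbb{T}^d_x\times\R^3_v$ the dissipation of $\cTu$ is degenerate in the five hydrodynamic directions, and one must exploit the mixing provided by the transport through commutator-type cross terms to close the coercivity estimate modulo the conservation laws. By contrast, the enlargement step is, by now, a fairly mechanical application of the splitting already developed in Section \ref{sec.ElastLinOp}, and the spectral identification follows from standard resolvent bootstrap arguments.
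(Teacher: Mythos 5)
First, a point of framing: the paper does not prove this statement at all. It is quoted verbatim as \cite[Theorem~4.2]{GualdaniMischlerMouhot} and introduced with ``Let us first recall an important result due to Gualdani, Mischler, Mouhot''; it is used downstream (in the proof of Theorem \ref{thm.SpectralGapL1}) as a black box. So there is no in-paper proof to compare against, and reconstructing one is strictly out of scope for this paper. That said, your sketch correctly identifies the two genuine ingredients behind the cited result: the kernel identification plus hypocoercivity of $\cTu - v\cdot\nabla_x$ in a Gaussian-weighted Hilbert space (this is the Mouhot--Neumann input), and a factorization $\hat{\sL}_1=\hat{\cA}+\hat{\cB}$ to move the decay between functional spaces (this is the Gualdani--Mischler--Mouhot input). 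Your kernel argument (skew part drops out, the velocity spectral gap forces a local equilibrium, and $v\cdot\nabla_x h=0$ on the torus kills the $x$-dependence of the five macroscopic fields) is the standard and correct one.

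The one genuine gap is the direction of your enlargement step. You propose to establish decay in $\mathscr{H}_0=L^2_{x,v}(\cM^{-1/2})$ and then ``enlarge'' to $\cE'=H^s_xH^{\sigma}_v(\cM^{-1/2})$. But $\cE'\subset\mathscr{H}_0$: you are trying to transfer decay from the \emph{larger} space to the \emph{smaller}, which is the opposite of what Theorem \ref{thm.Enlargement} does (it requires the decay on the small space $E'$ as a hypothesis and delivers it on the big space $\cE'\supset E'$). Concretely, for the factorization to run in your direction you would need $T_n$ to map $L^2_{x,v}$ into $H^s_xH^{\sigma}_v$ with exponential decay; the truncated operator $\cTS$ regularizes in $v$ only, and the gain of $x$-regularity in Lemma \ref{lem.BoundTn} comes from the averaging operator $D_t=t\nabla_x+\nabla_v$ and yields only half a derivative per two iterations, starting from data that already has some $x$-regularity --- your claim that ``$\hat{\cA}$ is bounded from $\cE'$ into $\mathscr{H}_0$'' is true but points the wrong way and gives nothing. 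The standard resolution (and the one underlying the cited theorem) is to run the hypocoercivity estimate directly at the $H^s_xH^{\sigma}_v(\cM^{-1/2})$ level --- the $x$-derivatives commute with $\hat{\sL}_1$ and the $v$-derivatives produce commutators controlled by the collision frequency --- and to reserve the $\cA/\cB$ factorization for the genuine enlargement to the $L^1$-based stretched-exponential spaces, which is exactly what Section \ref{sec.ElastLinOp} of the paper does afterwards. A related minor imprecision: a norm on $L^2_{x,v}$ ``involving cross terms between $\nabla_x$ and $\nabla_v$'' is an $H^1$-type norm, not one equivalent to $\|\cdot\|_{L^2_{x,v}}$; pure $L^2$ hypocoercivity requires a different auxiliary operator (\`a la Dolbeault--Mouhot--Schmeiser) or a shrinkage argument.
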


We can now formulate the main result of this section.\\

\begin{theorem}\label{thm.SpectralGapL1}
For any $e\in(0,1]$, there exist constructive constants $C\geq1$, $a_2>0$ such that the operator $\sL_1$ satisfies in $E_0$ and $E_1$:
\[
 \Sigma(\sL_1)\cap \Delta_{-a_2}=\{0\}\quand N(\sL_1)=Span\{\cM\}.
\]
Moreover, $\sL_1$ is the generator of a strongly continuous semigroup $h(t)=S_{\sL_1}h_{in}$ in $E_0$ and $E_1$, solution to the initial value problem \eqref{eqn.LinearEqn} with $\alpha=1$, which satisfies that for all $t\geq 0$ and $j=0,1$:
\[
 %\|h(t)-\PLzz h_{in}\|_{E_j}\leq Ce^{-a_2t}\|h_{in}-\PLzz h_{in}\|_{E_j}.
 \left\|S_{\sL_1}(t)(\id-\PLzz)\right\|_{\sB(E_j)}\leq Ce^{-a_2t}
\]

\end{theorem}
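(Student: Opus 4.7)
The plan is to follow the factorization and enlargement strategy of Gualdani, Mischler and Mouhot \cite{GualdaniMischlerMouhot}. The starting point is the spectral analysis of $\sLu$ in the small Hilbertian weighted space $\cE' := H^s_x H^\sigma_v(\cM^{-1/2})$ provided by Theorem \ref{thm.Gualdani}, which is then transferred to the larger spaces $E_0$ and $E_1$ via the splitting $\sLu = \cAud + \cBud$ constructed above.

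First I would establish the spectral gap in $\cE'$. By Theorem \ref{thm.Gualdani}, the elastic transport-collision operator $\hat{\sL}_1 := \cTu - v\cdot\nabla_x$ admits a spectral gap $\lambda>0$ in $\cE'$, with a five-dimensional kernel spanned by the collision invariants $\cM,\ v_i\cM\ (i=1,2,3),\ |v|^2\cM$. Writing $\sLu = \hat{\sL}_1 + \cL$ and testing $\sLu h$ against $h\cM^{-1}$ in $\mathscr{H}$, the coercivity of $\hat{\sL}_1$ controls $\|h-\Pi_5 h\|_{\mathscr{H}}^2$, where $\Pi_5$ projects onto $N(\hat{\sL}_1)$; by Theorem \ref{thm.SpectrumQe} the forcing operator $\cL$ is dissipative in $\mathscr{H}$ with spectral gap $\mu_e$ and one-dimensional kernel $\mathrm{Span}\{\cM\}$. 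Since $\cL$ preserves mass but neither momentum nor energy, a direct computation of $\langle \cL\varphi,\varphi\rangle_{\mathscr{H}}$ for $\varphi$ in the four-dimensional subspace spanned by $\{v_i\cM,\,|v|^2\cM\}$ produces a strictly negative contribution, and combining both dissipation estimates I would conclude $N(\sLu)=\mathrm{Span}\{\cM\}$ in $\cE'$ together with a spectral gap $\lambda_1>0$ for $\sLu$.

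Next I would transfer this spectral information from $\cE'$ to $E_j$, $j=0,1$. Lemma \ref{lem.HypodissBudEj} provides that $\cBud+a_1$ is hypodissipative in $E_j$, so $S_{\cBud}(t)$ decays at rate $a_1$ there; Lemma \ref{lem.TnNormBound} guarantees that an iterate $T_n = (\cAud S_{\cBud})^{(\ast n)}$ maps $E_0$ continuously into $E_1$ (and hence, by iterating once more and exploiting the compact support and smoothness gained from $\cAud$, into the Hilbertian reference space $\cE'$) with norm $\le Ce^{-a't}$ for any $a'\in(0,a_1)$. The Duhamel-type iterated identity
\[
 S_{\sLu}(t) = \sum_{k=0}^{n-1}\bigl(S_{\cBud}\ast (\cAud S_{\cBud})^{(\ast k)}\bigr)(t) + \bigl(S_{\sLu}\ast T_n\bigr)(t),
\]
combined with an abstract enlargement theorem as in \cite[Theorem~2.13]{GualdaniMischlerMouhot}, then yields $\Sigma(\sLu)\cap\Delta_{-a_2}=\{0\}$ and the estimate $\|S_{\sLu}(t)(\id-\PLzz)\|_{\sB(E_j)}\leq Ce^{-a_2 t}$, for any $0<a_2<\min(a',\lambda_1)$ with constructive constants.

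The main obstacle is the kernel-reduction step in the small space: although the dissipation of $\hat{\sL}_1$ only controls the orthogonal complement of its five-dimensional kernel, one must ensure that $\cL$ is genuinely coercive on the momentum and energy modes so that the coupled operator $\sLu$ keeps only mass as conservation law. This coercivity must be made quantitative and uniform in $e\in(0,1]$ in order to produce a constructive spectral gap $\lambda_1$ and subsequently enable the enlargement with explicit rates.
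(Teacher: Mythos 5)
Your proposal follows essentially the same route as the paper: the spectral gap of $\hat{\sL}_1$ in $H^{s}_{x}H^{\sigma}_v(\cM^{-1/2})$ from Theorem \ref{thm.Gualdani} combined with the dissipativity of $\cL$ from Theorem \ref{thm.SpectrumQe} to get the one-dimensional kernel in the small space, then the splitting $\sLu=\cAud+\cBud$ together with Lemmas \ref{lem.HypodissBudEj}, \ref{lem.BoundednessAud} and \ref{lem.TnNormBound} to apply the enlargement theorem and transfer the decay to $E_0$ and $E_1$. The only difference is that you spell out the kernel-reduction step (strict dissipativity of $\cL$ on the momentum and energy modes) which the paper states as a direct consequence of Theorems \ref{thm.SpectrumQe} and \ref{thm.Gualdani} without detail; your sketch of that step is correct and consistent with the paper's intent.
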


\begin{proof}
The idea of the proof consist in deducing the spectral properties in $E_j$ from the much easier spectral analysis in $H^{s'}_{x,v}(\cM^{-1/2})$. More precisely, we will see that the assumptions of a more abstract theorem regarding the enlargement of the functional space semigroup decay are satisfied. We state this result in Theorem \ref{thm.Enlargement}.\\

Consider $\cE'=E_j$ and $E'=H^{s'}_{x,v}(\cM^{-1/2})$  with $s'$ large enough so $E'\subset\cE'$. The assumptions in \ref{it.SGGualdani2} in Theorem \ref{thm.Enlargement} are a direct consequence of the Lemmas \ref{lem.HypodissBudEj}, \ref{lem.BoundednessAud} and \ref{lem.TnNormBound}. Indeed, from Lemma \ref{lem.BoundTn} and Lemma \ref{lem.Enlargement} we have for instance
\[
 \|T_n(t)h\|_{H^{s'}_{x,v}}\leq C e^{-a't}\|h\|_{L^{1}_{x,v}(\vv m)},
\]
and so
\[
 \|T_{n+1}\|_{E'}\leq C e^{-a't}\|h\|_{\cE'}.\\
\]

So we are left with the task of verifying that assumption \ref{it.SGGualdani1} in Theorem \ref{thm.Enlargement} is satisfied. However, this is a direct consequence of Theorem \ref{thm.SpectrumQe} and Theorem \ref{thm.Gualdani} taking $a_2\in(0,a_1)$, where $a_1$ is given by Lemma \ref{lem.HypodissBudEj}. Therefore, the result follows by the Enlargement Theorem \ref{thm.Enlargement}.\\ 
\end{proof}

%------------------------------------------------------------------------------------------------------
%Linearized operator
%------------------------------------------------------------------------------------------------------

\section{Properties of the linearized operator}\label{sec.LinOp}

We now focus on the study of the linear equation 
\[
 \partial_t h=\cLa(h)=\cQa(\Fa,h)+\cQa(h,\Fa)+\cL(h)-v\cdot\nabla_x h,
\]
introduced in \eqref{eqn.LinearEqn} for $h=h(t,x,v)$ with $x\in\T^3$ and $v\in\R^3$.\\

First, we want to find a splitting of the linearized operator $\cLa=\cA_{\alpha}+\cB_{\alpha}$ where $\cA_{\alpha}$ is bounded and $\cB_{\alpha}$ hipodissipative in $E_j$ for $j=0,1$. We are able to do this by a perturbative argument around the elastic case. Once we obtain the localization in the spectrum and exponential decay of $\cLa$ in $E=E_0$ we, once again, apply the Enlargement Theorem \ref{thm.Enlargement} to obtain this properties in the larger space $\cE$.\\

%------------------------------------------------------------------------------------------------------
\subsection{The linearized operator and its splitting.}

In this section we give a decomposition of the linear operator $\cLa$. In order to do this, for any $\delta\in(0,1)$ consider the bounded (by one) operator $\Td$ defined in Section \ref{sec.ForcingSplit}. We also need to consider the collision frequency $\Na=L(\Fa)$, where $\Fa$ is given by Theorem \ref{thm.ExistenceUniqSS} and $L$ by \ref{eqn.DefL}.\\

Let us define the operator $\cTa$ by $ \cTa(h)=\cQa(\Fa,h)+\cQa(h,\Fa)$. Therefore, using the weak formulation we have for any test function $\psi$
\begin{align*}
&\int_{\R^3}\cTa(h)\psi dv \\
&=\int_{\R^3\times\R^3\times\S^2}\Fa(v)h(v_*)|v-v_*|\left[\psi(v'_{*})+\psi(v')-\psi(v_*)-\psi(v)\right]d\sigma dv_* dv.
\end{align*}
Now we can give the following decomposition of the linearized collision operator $\cTa(h)$:
\[
 \cTa(h)=\cTaR(h)+\cTaS(h)-\Na h,
\]
where $\cTaS$ is the truncated operator given by $\Theta_{\delta}$ and $\cTaR$ the corresponding reminder. By this we mean:
\[
\cTaS(h)=\cQaS^+(h,\Fa)+\cQaS^+(\Fa,h)-\cQaS^-(\Fa,h),
\]
where $\cQaS^+$ (resp. $\cQaS^-$) is the gain (resp. loss) part of the collision operator associated to the mollified collision kernel $\Theta_{\delta} B$. More precisely, for any test function $\psi$
\begin{equation*}
\left\langle\cQaS^+(h,\Fa),\psi\right\rangle=\int_{\R^3\times\R^3\times\S^2} h(v)\Fa(v_*)\left(\Td\cdot|v-v_*|\right)\psi(v)d\sigma dv_* dv.\\
\end{equation*}
In a similar way
\[
\cTaR(h)=\cQaR^+(h,\Fa)+\cQaR^+(\Fa,h)-\cQaR^-(\Fa,h),
\]
where $\cQaR^+$ (resp. $\cQaR^-$) is the gain (resp. loss) part of the collision operator associated to the mollified collision kernel $(1-\Theta_{\delta})B$. \\

Hence we can give a decomposition for the linearized operator $\cLa$ in the following way:
\begin{align*}
\cLa(h)&=\cTaR(h)+\cTaS(h)-(\Na+\nu_e)h+\cLR(h)+\cLS(h)-v\cdot\nabla_x h\\
       &=\left(\cTaS(h)+\cAed(h)\right)+\left(\cTaR(h)-\Na h+\cBed -v\cdot\nabla_x h\right)\\
       &=\cAad(h)+\cBad(h).
\end{align*}
where $\cAad(h):=\cTaS(h)+\cAed(h)$ and $\cBad(h)$ is the remainder.\\

Moreover, by the Carleman representation for the inelastic case given by Arlotti Lods in \cite[Theorem~ 2.1]{ArlottiLods} we can write the truncated operator as
\begin{equation}\label{eqn.TruncatedOpAad}
 \cAad(h)(v)=\int_{\R^3}k_{\delta}(v,v_*)h(v_*)dv_*,
\end{equation}
where $k_{\delta}=k_{\alpha,\delta}+k_{e,\delta}$ with $k_{e,\delta}$ is defined in \eqref{eqn.CalermanAed}, and $k_{\alpha,\delta}$ is the kernel associated to $\cTaS$. Notice that by Lemma \ref{lem.BoundednessAed} we obtain a regularity estimate on the truncated operator $\cAad$. \\

\begin{lemma}\label{lem.BoundednessAad}
For any $s\in\N$, any $\alpha\in (0,1]$ and any $e\in(0,1]$, the operator $\cAad$ maps $L^1_v (\vv)$ into $H_v^{s+1}$ functions with compact support, with explicit bounds (depending on $\delta$) on the $L^1_v(\vv)\mapsto H_v^{s+1}$ norm and on the size of the support. More precisely, there are two constants $C_{s,\delta}$ and $R_{\delta}$ such that for any $h\in L^1_v(\vv)$ 
\[
K:= \supp \cAad h\subset B(0,R_{\delta}),\quand \|\cAad\|_{H^{s+1}_v(K)}\leq C_{s,\delta} \|h\|_{L^1_v(\vv)}.
\]
In particular, we deduce that $\cAad$ is in $\sB(E_j )$ for $j =0, 1$ and $\cAad$ is in $B(\cE, E)$.
\end{lemma}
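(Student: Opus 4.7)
The plan is to mirror the proof template of the analogous Lemma \ref{lem.BoundednessAed}, reducing the inelastic truncated operator $\cAad$ to a compactly-supported integral operator against a smooth kernel. The starting point is the splitting $\cAad = \cTaS + \cAed$ already written down in the text. The term $\cAed$ is covered word for word by Lemma \ref{lem.BoundednessAed}, which supplies both the compact support in $B(0,R_\delta)$ and the $L^1_v(\vv)\to H^{s+1}_v$ bound. Hence the whole task reduces to proving the same two properties for the truncated bilinear operator $\cTaS$.

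For $\cTaS = \cQaS^+(\cdot,\Fa) + \cQaS^+(\Fa,\cdot) - \cQaS^-(\Fa,\cdot)$, I would invoke the Carleman representation for inelastic collisions (Theorem 2.1 of \cite{ArlottiLods}) piece by piece, rewriting each one as an integral operator of the form $h\mapsto \int k_{\alpha,\delta}(v,v_*)h(v_*)\,dv_*$. The resulting kernel $k_{\alpha,\delta}$ is smooth and compactly supported in $\R^3\times\R^3$ because the truncation $\Theta_{\delta}$ simultaneously (i) forces $|v|\leq 2\delta^{-1}$ and $\delta\leq|v-v_*|\leq 2\delta^{-1}$, thereby removing the collision singularities, and (ii) belongs to $C^\infty$; meanwhile $\Fa\in C^\infty(\R^3)$ by Theorem \ref{thm.ExistenceUniqSS}. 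This already yields the support estimate $\supp \cAad h\subset B(0,R_\delta)$ for some $R_\delta$ depending only on $\delta$.

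With a smooth, compactly-supported kernel in hand, the $L^1_v(\vv)\to H^{s+1}_v$ bound follows from the gain-regularization estimate of Alonso--Lods (Proposition 2.4 of \cite{AlonsoLods}) applied to the mollified collision kernel $\Theta_{\delta}|v-v_*|$, exactly as in the proof of Lemma \ref{lem.BoundednessAed}. The two remaining boundedness assertions $\cAad\in\sB(E_j)$ for $j=0,1$ and $\cAad\in\sB(\cE,E)$ then follow by using the compact support in $v$ to absorb the velocity weights $\vv^k m$, together with the fact that $\cAad$ commutes with $x$-derivatives, so regularity in $x$ propagates for free.

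The main technical point I expect to require care is the uniformity in $\alpha\in(0,1]$ of the constants $C_{s,\delta}$ and $R_\delta$. This is not automatic from the smoothness of $\Fa$ at each fixed $\alpha$, and it must be tracked through Lemma \ref{lem.BoundSobolevNorm}, which bounds $\|\Fa\|_{W^{k,1}_v(\vv^q m)}$ uniformly in $\alpha$. Feeding those uniform Sobolev estimates for $\Fa$ into Alonso--Lods is precisely what renders the final constants independent of $\alpha$.
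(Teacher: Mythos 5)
Your proposal is correct and follows essentially the same route as the paper: split $\cAad=\cTaS+\cAed$, dispose of $\cAed$ via Lemma \ref{lem.BoundednessAed}, obtain compact support from the truncation $\Td$, and get the $L^1_v\to H^{s+1}_v$ smoothing for the gain parts from the Carleman representation together with the Alonso--Lods estimate on the mollified kernel $\Td|v-v_*|$. The only cosmetic difference is that the paper handles the loss term $\cQaS^-(\Fa,\cdot)$ not through Carleman but by writing it directly as $\Td^1(v)\Fa(v)\,(h\ast\zeta_{\delta})(v)$ with $\zeta_{\delta}$ smooth and compactly supported, which is equivalent to your kernel argument; your remark on tracking uniformity in $\alpha$ through Lemma \ref{lem.BoundSobolevNorm} is a correct and welcome addition.
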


\begin{proof}
It is clear that the range of the operator $\cAad$ is included into a compactly supported functions thanks to the truncation. Moreover, the bound on the size of the support is related to $\delta$.\\

Notice that, the proof of the smoothing estimate for the gain terms $\cQaS^+$ in the definition of $\cAad$ follows as in the proof of Lemma \ref{lem.BoundednessAed}. On the other hand, the regularity estimate is trivial for the loss term since we can decompose the truncation as $\Td=\Td^1(v)\Td^2(v-v_*)\Td^3(\cos \theta)$, and we can write
\begin{align*}
\left\langle \cQaS^-(\Fa,h),\psi\right\rangle&=\int_{\R^3\times\R^3\times\S^2}\Fa(v)h(v_*)\psi(v)\left(\Td|v-v_*|\right)d\sigma dv_* dv\\
  &=\int_{\R^3}\Td^1(v)\Fa(v)\left(h\ast\zeta_{\delta}\right)\psi(v)dv,
\end{align*}
where $\zeta_{\delta}:=\Td^2(v-v_*)\Td^3(\cos \theta)|v-v_*|$ which clearly has compact support.\\

Moreover, since the regularity of $\cAed$ is given by Lemma \ref{lem.BoundednessAed} we conclude our proof.
\end{proof}

Notice that from \cite[Lemma~2.6]{Tristani} we have that the operators $\cTa$ and $\cL$ are bounded from $W_{x}^{s,1}W_{v}^{k,1}(\vv^{q+1} m)$ to $W_{x}^{s,1}W_{v}^{k,1}(\vv^{q} m)$. Hence, using the fact that the operator $v\cdot\nabla_x$ is bounded from $E_1$ to $E_{0}$, we can conclude that the operator $\cLa$ is bounded from $E_1$ to $E_{0}$. \\

%---------------------------------------------------------------------------------------------------------
\subsection{Hypodissipativity of $\cBad$}

The aim of this section is to prove the hypodissipativity of $\cBad$. 

\begin{lemma}\label{lem.HypodissBad}
Let us consider $k\geq 0$, $s\geq k$ and $q\geq 0$. Let $\delta>0$ be given by Lemma \ref{lem.HypodissBudEj}. Then, there exist $\alpha_1\in (\alpha_0,1]$ and $a_3>0$ such that for any $\alpha\in[\alpha_1,1]$ and any $e\in(0,1]$, the operator $\cBad+a_3$ is hypodissipative in $W_{x}^{s,1}W_{v}^{k,1}(\vv^{q} m)$.
\end{lemma}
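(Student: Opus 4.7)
The plan is to treat $\cBad$ as a perturbation of $\cBud$, so that the hypodissipativity we already have from Lemma \ref{lem.HypodissBudEj} can be transferred to $\cBad$ once the difference operator is shown to be small. Since $\cBed$ appears identically in both decompositions, the difference reduces to a purely elastic-collision contribution:
\[
\cBad(h) - \cBud(h) = (\cTaR - \cTR)(h) - (\Na - \nu)\,h,
\]
and the goal is to estimate each piece in the $W^{s,1}_x W^{k,1}_v(\vv^{q} m)$-norm by a quantity of the form $\varepsilon(\alpha)\,\|h\|_{W^{s,1}_x W^{k,1}_v(\vv^{q+1} m)}$ with $\varepsilon(\alpha)\to 0$ as $\alpha\to 1$.

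First I would handle the multiplier: $\Na-\nu = L(\Fa-\cM) = 4\pi\,|\cdot|\ast(\Fa-\cM)$, which gives the pointwise bound $|\Na(v)-\nu(v)|\leq C\vv\,\|\Fa-\cM\|_{L^1_v(\vv)}$, and Lemma \ref{lem.DifferenceFaM} turns this into a $\eta(\alpha)$-bound with the correct extra power of $\vv$. Next I would split
\[
\cTaR(h)-\cTR(h) = \bigl[\cQaR(\Fa-\cM,h)+\cQaR(h,\Fa-\cM)\bigr] + \bigl[(\cQaR-\cQuR)(\cM,h)+(\cQaR-\cQuR)(h,\cM)\bigr].
\]
The first bracket is again handled by Lemma \ref{lem.DifferenceFaM} combined with the bilinear estimates of \cite{Tristani} already recalled above (which are uniform in $\alpha\in(\alpha_0,1]$ and map $W^{s,1}_x W^{k,1}_v(\vv^{q+1} m)\to W^{s,1}_x W^{k,1}_v(\vv^{q} m)$). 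The second bracket uses the explicit dependence of the post-collisional velocities on $\alpha$ through the factor $(1+\alpha)/2$, which provides a modulus of continuity $O(1-\alpha)$ for the difference of the inelastic and elastic collision operators against a fixed smooth Maxwellian $\cM$; the derivatives $\nabla_v^{\sigma'}$ can be moved onto $\cM$ via the product rule, and the resulting Maxwellian factors give the weighted bounds.

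With these estimates in hand, I would proceed exactly as in Step 4 of the proof of Lemma \ref{lem.HypodissBed}: take $\delta>0$ fixed by Lemma \ref{lem.HypodissBudEj}, introduce the equivalent norm $\|\cdot\|_{*}$ on $W^{s,1}_x W^{k,1}_v(\vv^{q} m)$ with a small parameter $\varepsilon$ balancing $v$-derivative weights, and compute
\[
\Re\langle\psi,\cBad h\rangle \;=\; \Re\langle\psi,\cBud h\rangle + \Re\langle\psi,(\cBad-\cBud)h\rangle \;\leq\; -a_1 \|h\|_{**} + \bigl(C_1\,\eta(\alpha) + C_2\,(1-\alpha)\bigr)\|h\|_{**},
\]
where $\|\cdot\|_{**}$ denotes the version of $\|\cdot\|_{*}$ with weight $\vv^{q+1}$ and $\psi\in F(h)$ as in \eqref{eqn.DefFDual}. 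Choosing $\alpha_1\in(\alpha_0,1]$ close enough to $1$ so that $C_1\eta(\alpha)+C_2(1-\alpha)\leq a_1/2$ for every $\alpha\in[\alpha_1,1]$ yields the claim with $a_3 := a_1/2$.

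The main obstacle is the second bracket in the splitting of $\cTaR-\cTR$, namely quantifying $(\cQaR-\cQuR)(\cM,h)$ as genuinely $O(1-\alpha)$ in a norm that still preserves the $\vv^{q+1}$-weight needed to absorb it into the dissipative term: one has to exploit that only the normal component of the relative velocity is rescaled by $\alpha$, perform a change of variables on the sphere, and carefully track derivatives falling on $\cM$ so that the remaining $h$-factor carries no more weight than $\vv^{q+1}m$. Once this continuity estimate is established (uniformly in the truncation parameter $\delta$, which is fixed), the rest of the argument is a direct adaptation of the structure of Lemma \ref{lem.HypodissBed}, and extending from $k=0$ to higher $k$ and $s$ follows the same commutation-plus-remainder scheme used there via the identity \eqref{eqn.Vderivative}.
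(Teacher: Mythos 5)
Your proposal is correct and follows essentially the same route as the paper: both treat $\cBad$ as a perturbation of $\cBud$, reduce the difference to $(\cTaR-\cTR)-(\Na-\nu)$, and control it by $\eta(\alpha)\|h\|_{W^{s,1}_xW^{k,1}_v(\vv^{q+1}m)}$ using Lemma \ref{lem.DifferenceFaM} together with the uniform bilinear bounds (Proposition \ref{prop.MM31}) and the $O(1-\alpha)$ continuity of the gain operator (Proposition \ref{prop.MM32}), before absorbing it into the dissipation of $\cBud$ by taking $\alpha$ close to $1$. The only cosmetic difference is which factor of the telescoping carries $\Fa-\cM$ versus the $\alpha$-dependence (you test the $\alpha$-continuity against $\cM$, the paper against $\Fa$ using Lemma \ref{lem.BoundSobolevNorm}); both are valid.
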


\begin{proof} 
As in the proof on Lemma \ref{lem.HypodissBed}, we only consider the case $W^{1,1}_{x,v}(\vv^q m)$. The higher-order cases are  treated in a similar way.\\

We want to construct a positive constant $a_3$ and a norm $\|\cdot\|_*$ equivalent to the norm on $W^{1,1}_{x,v}(\vv^q m)$ such that  
\begin{equation}\label{eqn.NormDerivativeBad}
 \Re\left\langle \psi,\cBed h\right\rangle\leq -a_3\|h\|_*,
\end{equation}
for $\psi(v):=\text{sign }h(v)\vv^{q}m(v)$.\\

We have divided the proof into four steps. The first one deals with the hypodissipativity of $\cBad$ in $L^1_xL^1_v(\vv^{q+1}m)$, while the second and third  deal with the $x$ and $v$-derivatives respectively. In the last step we construct the $\|\cdot\|_*$ norm and prove that it satisfies \eqref{eqn.NormDerivativeBad}.\\

\textbf{Step 1:} The main idea of the proof is to compare $\cBad$ with $\cBud$ defined in Section \ref{sec.ElastLinOp}. In order to do this notice that
\[
 \cBad-\cBud=\left(\cTaR-\cTR\right)-\left(\Na-\nu\right) h.\\
\]
Moreover, it is easy to see that the definition of $\cTR$ given in \eqref{eqn.DefTuR} coincides with the definition of $\cTaR$ with $\alpha=1$. Thus, we can write
\begin{align*}
 \cTaR(h)&-\cTR(h)=\left(\cQaR^+(h,\Fa)-\cQuR^+(h,\Fa)\right)+\left(\cQaR^+(\Fa,h)-\cQuR^+(\Fa,h)\right)\\
 &-\left(\cQaR^-(\Fa,h)-\cQuR^-(\cM,h)\right)+\cQuR^+(h,\Fa-\cM)+\cQuR^+(\Fa-\cM,h).
\end{align*}

Therefore, if we take $\psi(v)=\text{sign }h(v)\vv^{q}m(v)$, we have that
\begin{align}\label{eqn.Eta1}
\nonumber
\int_{\R^3}&\left(\cQaR^+(h,\Fa)-\cQuR^+(h,\Fa)\right)\psi(v)dv\\\nonumber
 %&=\int_{\R^3}\cQaR^+(h,\Fa)\psi(v)dv-\int_{\R^3}\cQuR^+(h,\Fa)\psi(v)d\sigma dv_*dv\\\nonumber
 %&=\int_{\R^3\times\R^3\times\S^2}(1-\Td)h(v)\Fa(v_*)|v-v_*|\left(\psi(v'_{\alpha})-\psi(v'_1)\right)d\sigma dv_*dv\\\nonumber
 %&\leq \int_{\R^3}\left|\cQa^+(h,\Fa)-\cQu^+(h,\Fa)\right|\vv^{q}m(v)dv\\\nonumber
 &\leq \|\cQa^+(h,\Fa)-\cQu^+(h,\Fa)\|_{L^1_v(\vv^{q}m)}\\\nonumber
 &\leq p(\alpha-1)\|\Fa\|_{W^{k,1}_v(\vv^{q+1}m)}\|h\|_{L^1_v(\vv^{q+1} m)}\\
 &\leq \eta_1(\alpha)\|h\|_{L^1_v(\vv^{q+1} m)}.
\end{align}
Here we used Proposition \ref{prop.MM32}, where $p(r)$ is a polynomial going to 0 as $r$ goes to zero. Hence, $\eta_1(\alpha)\to0$ as $\alpha\to1$.\\

In the same manner we can see that there exists a function $\eta_2(\alpha)$ going to 0 as $\alpha$ goes to 1, such that
\begin{equation}\label{eqn.Eta2}
 \int_{\R^3}\left(\cQaR^+(\Fa,h)-\cQuR^+(\Fa,h)\right)\psi(v)dv\leq \eta_2(\alpha)\|h\|_{L^1_v(\vv^{q+1} m)}.\\
\end{equation}

Furthermore, since the loss term does not depend on the restitution coeffient, using Proposition \ref{prop.MM31} with $k=1$, we have that
\begin{align}\label{eqn.Eta3}
\nonumber
\int_{\R^3}&\left(\cQaR^-(\Fa,h)-\cQuR^-(\cM,h)\right)\psi(v)dv\\\nonumber
 %&=\int_{\R^3\times\R^3\times\S^2}(1-\Td)h(v_*)\left(\Fa(v)-\cM\right)|v-v_*|\psi(v)d\sigma dv_*dv\\\nonumber
 %&\leq \int_{\R^3\times\R^3\times\S^2}|h(v_*)|\left|\Fa(v)-\cM\right\|v-v_*|\psi(v)d\sigma dv_*dv\\\nonumber
 %&\leq \int_{\R^3}\cQu^-(|\Fa-\cM|,h)\vv^{q}m(v)dv\\\nonumber
 &\leq \|\cQu^-(|\Fa-\cM|,h)\|_{L^1(\vv^{q}m)}\\\nonumber
 &\leq C_{1,m}\|\Fa-\cM\|_{L^1_v(\vv^{q+1} m)}\|h\|_{L^1_v(\vv^{q+1} m)}\\
 &=\eta_3(\alpha)\|h\|_{L^1_v(\vv^{q+1} m)}.
\end{align}
By Lemma \ref{lem.DifferenceFaM} we know that $\eta_3(\alpha)$ converges to 0 when $\alpha\to1$.\\

Analogously, applying Lemma \ref{lem.DifferenceFaM} we obtain a function $\eta_4(\alpha)$ converging to 0 when $\alpha\to1$ such that
\begin{equation}\label{eqn.Eta4}
\int_{\R^3}\left(\cQuR^+(h,\Fa-\cM)+\cQuR^+(\Fa-\cM,h)\right)\psi(v)dv\leq \eta_4(\alpha)\|h\|_{L^1_v(\vv^{q+1} m)}.\\
\end{equation}
Hence, taking $\eta_5:=\eta_1+\cdots+\eta_4$, gathering \eqref{eqn.Eta1}, \eqref{eqn.Eta2}, \eqref{eqn.Eta3} and \eqref{eqn.Eta4}  we have that
\begin{equation}\label{eqn.Eta5}
 \int_{\R^3}\left(\cTaR(h)-\cTR(h)\right)\psi(v)dv\leq \eta_5(\alpha)\|h\|_{L^1_v(\vv^{q+1} m)}.\\
\end{equation}

We have proved that there exist $\eta_5(\alpha)$ converging to 0 when $\alpha\to1$ such that
\begin{align*}
 \int_{\R^3}&\cBad(h)\text{sign}(h)\vv^q m(v)dv\\
  &\leq \int_{\R^3}\cBud(h)\text{sign}(h)\vv^q m(v)dv+\eta_5(\alpha)\|h\|_{W^{k,1}_v(\vv^{q+1}m)}\\
  &+\int_{\R^3}\left|\Na(v)-\nu(v)\right|\|h\|\vv^{q+1}m(v)dv.
\end{align*}

Finally, since $|v-v_*|\leq\vv\left\langle v_*\right\rangle$ we have
\[
 \left|\Na(v)-\nu(v)\right|\leq \int_{\R^3}|v-v_*\|\Fa(v_*)-\cM(v_*)|dv_*\leq \vv\|\Fa-\cM\|_{L^1(\vv^{q} m)}.
\]
Then, we deduce from Lemma \ref{lem.DifferenceFaM} that
\[
 \int_{\R^3}\left|\Na(v)-\nu(v)\right\|h(v)|\vv^{q}m(v)dv\leq \eta_6(\alpha)\|h(v)\|_{L^1(\vv^{q+1}m)},
\]
with $\lim_{\alpha\to1}\eta_6(\alpha)=0$. To summarize, there exists a function $\eta=\eta_5+\eta_6$ converging to 0 when $\alpha\to1$ such that 
\begin{align*}
\int_{\R^3}&\cBad(h)\text{sign}(h)\vv^q m(v)dv\\
  &\leq \int_{\R^3}\cBud(h)\text{sign}(h)\vv^q m(v)dv+\eta(\alpha)\|h\|_{L^1_v(\vv^{q+1} m)}.
\end{align*}
Thus, fixing $\delta>0$ as in Lemma \ref{lem.HypodissBudEj}, this inequality becomes
\[
 \int_{\R^3\times\T^3}\cBad(h)\text{sign}(h)\vv^q m(v)dxdv\leq (\eta(\alpha)-a_1)\|h\|_{L^1_xL^1_v(\vv^{q+1} m)}.
\]

Taking $\alpha_1$ big enough we can suppose that for any $\alpha\in[\alpha_1,1]$ we have $\eta(\alpha)<a_1$ and therefore
\begin{equation}\label{eqn.a3}
 a'_3:=a_1-\eta(\alpha)>0.
\end{equation}
 
With this we can conclude 
\[
 \int_{\R^3\times\T^3}\cBad(h)\text{sign}(h)\vv^q m(v)dxdv\leq -a'_3\|h\|_{L^1_xL^1_v(\vv^{q+1} m)},
\]
where we deduce that for any $\alpha\in[\alpha_1,1]$, $\cBad+a'_3$ is dissipative in $L^1_xL^1_v(\vv^{q+1} m)$.\\

\textbf{Step 2:} Since the $x$-derivatives commute with $\cBad$, using the proof of \textbf{Step 1} we have
\[
 \int_{\R^3\times\T^3}\partial_x\left(\cBad( h)\right)\text{sign}(\partial_x h)\vv^q m(v)dxdv\leq -a'_3\|\nabla_x h\|_{L^1_xL^1_v(\vv^{q+1} m)}.\\
\]

\textbf{Step 3:} In order to deal with the $v$-derivatives, we proceed analogously to the proof of Lemma \ref{lem.HypodissBed} to see that
\[
 \partial_v\cBad(h)=\cBad(\partial_v h)-\partial_x h+\cR'(h),
\]
where, recalling that $\cR(h)$ is given by \eqref{eqn.DefReh},  
\begin{align}\label{eqn.DefRah}
 \cR'(h)&=\cQa^+(h,\partial_v \Fa)+\cQa^+(\partial_v\Fa,h)-\cQa^-(\partial_v \Fa,h)\\\nonumber
   &-\left(\partial_v\cTaS\right)(h)+\cTaS(\partial_v h)+\cR(h).
\end{align}
Proceeding as in the proof of Lemma \ref{lem.BoundednessAad} and performing one integration by parts, we have
\[
 \left\|\left(\partial_v\cTaS\right)(h)\right\|_{L^1_xL^1_v(\vv^{q} m)}+ \left\|\cTaS(\partial_v h)\right\|_{L^1_xL^1_v(\vv^{q} m)}\leq C_{\delta} \|h\|_{L^1_xL^1_v(\vv^{q} m)}.
\]
Using this, the estimates of Proposition \ref{prop.MM31} on the inelastic operators $\cQa^{\pm}$ and the bound of $\cR(h)$ given in \eqref{eqn.Reh}, we obtain for some constant $C_{\alpha,\delta}>0$
\begin{equation}\label{eqn.Rah}
 \|\cR'(h)\|_{L^1_xL^1_v(\vv^{q} m)}\leq C_{\alpha,\delta}\|h\|_{L^1_xL^1_v(\vv^{q+1} m)}.
\end{equation}

Thus, using the bounds found in \eqref{eqn.Rah} and the proof presented in \textbf{Step 1} we have
\begin{align*}
 \int_{\R^3\times\T^3}&\partial_v\left(\cBad( h)\right)\text{sign}(\partial_v h)\vv^q m(v)dxdv\\
 &\leq -a'_3\|\nabla_v h\|_{L^1_xL^1_v(\vv^{q+1} m)}+C_{\alpha,\delta}\|h\|_{L^1_xL^1_v(\vv^{q+1} m)}+\|\nabla_x h\|_{L^1_xL^1_v(\vv^{q+1} m)}.
\end{align*}
where $a'_3$ is defined in \eqref{eqn.a3}. \\

\textbf{Step 4:} Now, for some $\varepsilon>0$ to be fixed later, we define the norm
\[
 \|h\|_*=\|h\|_{L^1_xL^1_v(\vv^{q} m)}+\|\nabla_x h\|_{L^1_xL^1_v(\vv^{q} m)}+\varepsilon\|\nabla_v h\|_{L^1_xL^1_v(\vv^{q} m)}.
\]
Notice that this norm is equivalent to the classical $W^{1,1}_{x,v}(\vv^q m)$-norm. We deduce that
\begin{align*}
\int_{\R^3\times\T^3}&\cBad( h)\text{sign}(h)\vv^q m(v)dxdv\\
 &+\int_{\R^3\times\T^3}\partial_x\left(\cBad( h)\right)\text{sign}(\partial_x h)\vv^q m(v)dxdv\\
 &+\varepsilon\int_{\R^3\times\T^3}\partial_v\left(\cBad( h)\right)\text{sign}(\partial_v h)\vv^q m(v)dxdv\\
 &\leq -a'_3\left(\|h\|_{L^1_xL^1_v(\vv^{q+1} m)}+\|\nabla_x h\|_{L^1_xL^1_v(\vv^{q+1} m)}+\varepsilon\|\nabla_v h\|_{L^1_xL^1_v(\vv^{q+1} m)}\right)\\
 &+\varepsilon\left(C_{\alpha,\delta}\|h\|_{L^1_xL^1_v(\vv^{q+1} m)}+\|\nabla_x h\|\right)\\
 &\leq (-a'_3+o(\varepsilon))\left(\|h\|_{L^1_xL^1_v(\vv^{q+1} m)}+\|\nabla_x h\|_{L^1_xL^1_v(\vv^{q+1} m)}\right.\\
 &\left.+\varepsilon\|\nabla_v h\|_{L^1_xL^1_v(\vv^{q+1} m)}\right),
\end{align*}
where $o(\varepsilon)\to 0$ as $\varepsilon$ goes to 0. We choose $\varepsilon$ close enough to 0 so that $a_3=a'_3-o(\varepsilon)>0$. Hence, we obtain that $\cBd+a_3$ is dissipative in $W^{1,1}_{x,v}(\vv^q m)$ for the norm $\|\cdot\|_*$ and thus hypodissipative in $W^{1,1}_{x,v}(\vv^q m)$.
\end{proof} 

The following result comprises what we proved above:

\begin{lemma}\label{lem.HypodissBadEj}
There exist $\alpha_1\in (\alpha_0,1]$, $\delta\geq0$ and $a_3>0$ such that for any $\alpha\in[\alpha_1,1]$ and any $e\in(0,1]$, the operator $\cBad+a_3$ is hypodissipative in $E_j$, $j = 0, 1$ and $\cE$.
\end{lemma}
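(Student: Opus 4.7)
The plan is to derive Lemma \ref{lem.HypodissBadEj} as an immediate corollary of Lemma \ref{lem.HypodissBad} by specializing the general parameters $(k,q)$ and the $x$-regularity index to match each of the three target spaces. Recalling the definitions
\[
\cE = W_x^{s,1} L_v^1(m), \qquad E_0 = W_x^{s,1} W_v^{2,1}(\vv m), \qquad E_1 = W_x^{s+2,1} W_v^{4,1}(\vv^2 m),
\]
these correspond respectively to $(k,q)=(0,0)$, $(2,1)$ and $(4,2)$, with $x$-regularity index $s$ for $\cE, E_0$ and $s+2$ for $E_1$. Since $s>6$, in every case the hypotheses $k\geq 0$, $q\geq 0$ and ``$x$-regularity $\geq k$'' required by Lemma \ref{lem.HypodissBad} are satisfied.

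The next step is to apply Lemma \ref{lem.HypodissBad} separately to each of these three settings. This yields, for each space, a threshold $\alpha_1^{(j)}\in(\alpha_0,1]$, a hypodissipativity constant $a_3^{(j)}>0$, and a truncation parameter $\delta^{(j)}$ such that $\cBad+a_3^{(j)}$ is hypodissipative on that space whenever $\alpha\in[\alpha_1^{(j)},1]$ and $e\in(0,1]$. The key observation is that the $\delta$ appearing in Lemma \ref{lem.HypodissBad} is inherited from Lemma \ref{lem.HypodissBudEj} (via the comparison $\cBad-\cBud=(\cTaR-\cTR)-(\Na-\nu)$ performed in Step 1 of the previous proof), and Lemma \ref{lem.HypodissBudEj} already provides a \emph{single} $\delta$ for which $\cBud+a_1$ is hypodissipative simultaneously on $E_0$, $E_1$ and $\cE$. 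Consequently we may fix this common $\delta$ from the start.

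Setting
\[
\alpha_1:=\max\bigl\{\alpha_1^{(\cE)},\alpha_1^{(0)},\alpha_1^{(1)}\bigr\},\qquad
a_3:=\min\bigl\{a_3^{(\cE)},a_3^{(0)},a_3^{(1)}\bigr\},
\]
the conclusion of Lemma \ref{lem.HypodissBadEj} follows: for every $\alpha\in[\alpha_1,1]$ and every $e\in(0,1]$, $\cBad+a_3$ is hypodissipative on each of $\cE$, $E_0$, $E_1$. There is no substantive obstacle beyond this bookkeeping since the analytical work (control of $\cTaR-\cTR$ and of $\Na-\nu$ via Proposition \ref{prop.MM32} and Lemma \ref{lem.DifferenceFaM}, together with the smoothing estimate for $\cTaS$) was already carried out in Lemma \ref{lem.HypodissBad}; the only point worth verifying is that the function $\eta(\alpha)$ can be chosen to beat the common $a_1$ uniformly in the three parameter choices, which is achieved simply by taking $\alpha_1$ large enough for the worst of the three regimes.
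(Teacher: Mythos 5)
Your proposal is correct and follows exactly the route the paper intends: the paper states Lemma \ref{lem.HypodissBadEj} as a direct summary of Lemma \ref{lem.HypodissBad}, obtained by specializing $(k,q)$ and the $x$-regularity index to the three spaces $\cE$, $E_0$, $E_1$ and taking the worst of the finitely many thresholds. Your explicit bookkeeping (common $\delta$ inherited from Lemma \ref{lem.HypodissBudEj}, $\alpha_1$ as a maximum and $a_3$ as a minimum over the three cases) is precisely what the paper leaves implicit.
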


%---------------------------------------------------------------------------------------------------------
\subsection{Regularization properties of $T_n$ and estimates on $\cLa-\sL_1$}

Let us recall the notation 
\[
 T_n(t):=(\cAad S_{\cBad})^{(*n)}(t),
\]
for $n\geq 1$, where $S_{\cBad}$ is the semigroup generated by the operator $\cBad$ and $*$ denotes the convolution. The proof of the fact that $\cBad$ generates a $C_0$-semigroup can be found in Appendix \ref{sec.Semi}.\\

Notice that the proof of Lemma \ref{lem.BoundTn} remains valid in this case. Therefore, combining Lemma \ref{lem.HypodissBadEj} and Lemma \ref{lem.BoundTn} we get the assumptions of Lemma \ref{lem.Enlargement}, so applying it we get the following result:

\begin{lemma}\label{lem.TnNormBoundLa}
Let us consider $\alpha_1$ and $a_3$ as in Lemma \ref{lem.HypodissBadEj} and let $\alpha$ be in $[\alpha_1,1)$. For any $a'\in(0,a_3)$ and for any $e\in(0,1]$, there exist some constructive constants $n\in\N$ and $C_{a'}\geq 1$ such that for all $t\geq0$ 
\[
 \|T_{n}(t)\|_{\sB(E_0,E_{1})}\leq C_{a'}e^{-a't}.\\
\]
\end{lemma}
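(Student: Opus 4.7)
The plan is to reproduce, step by step, the argument of Lemma~\ref{lem.BoundTn} in the inelastic setting, and then feed the resulting short-time regularization/long-time decay estimates into the abstract iteration mechanism of Lemma~\ref{lem.Enlargement}. The two structural ingredients we have already secured are exactly the analogues of those used in the elastic proof: the regularization estimate of Lemma~\ref{lem.BoundednessAad} (which states that $\cAad:L^1_v(\vv)\to H^{s+1}_v(K)$ with a compact support $K=B(0,R_\delta)$), and the exponential decay of $S_{\cBad}$ coming from Lemma~\ref{lem.HypodissBadEj} (hypodissipativity of $\cBad+a_3$ on $E_j$ for $j=0,1$ and on $\cE$). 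Because the $x$-derivatives commute with both $\cAad$ and $\cBad$, gaining regularity in $v$ is the only non-trivial issue; regularity in $x$ will then be produced by the time-shifted operator $D_t:=t\nabla_x+\nabla_v$, exactly as in the elastic case.

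I would first establish the $T_1$-estimate \eqref{eqn.IneqT1}. By Lemma~\ref{lem.BoundednessAad} followed by Lemma~\ref{lem.HypodissBadEj} applied to $S_{\cBad}$ on $W^{s,1}_xL^1_v(\vv m)$, one immediately obtains
\[
\|T_1(t)h_0\|_{W^{s,1}_xW^{s+1,1}_v(K)}\leq Ce^{-a_3 t}\|h_0\|_{W^{s,1}_{x,v}(\vv m)}.
\]
To recover one $x$-derivative I would then introduce $g_t=S_{\cBad}(t)(\partial_x^{\beta}h_0)$ and apply $D_t$ to the equation $\partial_t g_t=\cLa g_t-\cAad g_t$. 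Since $D_t$ commutes with the free transport operator $\partial_t+v\cdot\nabla_x$ and, by the Leibniz rule (property \eqref{eqn.Vderivative}), differentiating the collision operators $\cQa(\Fa,\cdot)$, $\cQa(\cdot,\Fa)$ and $\cL$ produces terms involving $\nabla_v\Fa$ and $\nabla_v\cM_0$, while differentiating the truncated operator $\cAad$ produces integral operators with kernels $\nabla_v k_\delta$ and $\nabla_{v_*}k_\delta$, we end up with an evolution of the form $\partial_t(D_tg_t)=\cBad(D_tg_t)+\cI_\delta'(g_t)$, where $\cI_\delta'(g_t)$ is controlled in $L^1_{x,v}(\vv m)$ by $\|g_t\|_{L^1_{x,v}(\vv^2 m)}$. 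Crucially, the bounds on $\nabla_v\Fa$ needed here are supplied by Lemma~\ref{lem.BoundSobolevNorm}, so the inelastic analogue of the remainder bound \eqref{eqn.Reh} holds. Combining the two differential inequalities for $\|D_tg_t\|$ and $\|g_t\|$ with the hypodissipativity of $\cBad$ yields a decay of $\|D_tg_t\|_{L^1_{x,v}(\vv m)}$ at rate $e^{-a't}$ for any $a'\in(0,a_3)$. Writing $t\nabla_x T_1(t)\partial_x^\beta h_0=\cAad(D_tg_t)+\cAad^2(g_t)$ then gives \eqref{eqn.IneqT1} with $a_1$ replaced by $a_3$.

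Once \eqref{eqn.IneqT1} is in hand, the interpolation between it and the pure decay estimate gives a half-derivative gain at rate $e^{-a't}/\sqrt t$ for $T_1$, and convolving this with \eqref{eqn.IneqT1} produces the $T_2$-estimate \eqref{eqn.IneqT2} without any singularity in time, by the same elementary $\int_0^t(t-\tau)^{-1/2}e^{-(a_3-a')\tau}d\tau$ computation as in the elastic case. Iterating, after $n$ convolutions with $\cAad S_{\cBad}$ one gains enough regularity in both $x$ and $v$ (together with compact support in $v$) to map $E_0=W^{s,1}_xW^{2,1}_v(\vv m)$ into $E_1=W^{s+2,1}_xW^{4,1}_v(\vv^2 m)$, still at exponential rate. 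This is exactly the hypothesis of the abstract shrinkage/enlargement Lemma~\ref{lem.Enlargement}, whose conclusion is precisely the stated bound $\|T_n(t)\|_{\sB(E_0,E_1)}\leq C_{a'}e^{-a't}$.

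The only genuinely new point relative to the elastic proof is the treatment of the remainder $\cI_\delta'(g_t)$ generated by the $D_t$ commutation: one must verify that the inelastic gain and loss terms $\cQa^{\pm}(\cdot,\nabla_v\Fa)$, $\cQa^{\pm}(\nabla_v\Fa,\cdot)$ (together with the kernel-derivative pieces coming from $\cTaS$) satisfy an $L^1_v(\vv m)\to L^1_v(\vv^2 m)$ bound uniform in $\alpha\in[\alpha_1,1]$. This is where the $\alpha$-uniform moment and Sobolev estimates on $\Fa$ from Lemma~\ref{lem.BoundSobolevNorm}, together with the operator bounds of Proposition~\ref{prop.MM31}, are indispensable. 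Apart from this bookkeeping, the proof is word-for-word that of Lemma~\ref{lem.BoundTn} with $\cM\leadsto\Fa$, $\cAud\leadsto\cAad$, $\cBud\leadsto\cBad$ and $a_1\leadsto a_3$.
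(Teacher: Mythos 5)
Your proposal is correct and follows essentially the same route as the paper: the paper simply observes that the proof of Lemma~\ref{lem.BoundTn} carries over verbatim with $\cM\leadsto\Fa$, $\cAud\leadsto\cAad$, $\cBud\leadsto\cBad$, $a_1\leadsto a_3$ (the new remainder terms being controlled exactly as you indicate, via Lemma~\ref{lem.BoundSobolevNorm} and Proposition~\ref{prop.MM31}), and then concludes by combining Lemma~\ref{lem.HypodissBadEj} with Lemma~\ref{lem.Enlargement}. You have merely spelled out the details the paper leaves implicit.
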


%---------------------------------------------------------------------------------------------------------
%\section{Estimates on $\cLa-\sL_1$}

Moreover, using estimates from the proof of Lemma \ref{lem.HypodissBad}, we can prove the following result:

\begin{lemma}\label{lem.BoundLaL1}
There exists a function $\eta(\alpha)$ that tends to 0 as $\alpha$ tends to 1 such that the difference
$\cLa-\sL_1$ satisfies for any $e\in(0,1]$
\[
 \|\cLa-\sL_1\|_{\sB(E_1,E_{0})}\leq \eta(\alpha).
\]
\end{lemma}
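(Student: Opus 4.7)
The plan is to mimic the computation already carried out in \textbf{Step 1} of the proof of Lemma \ref{lem.HypodissBad}, but now measuring everything in the operator norm $\sB(E_1,E_0)$ rather than just as a dissipativity estimate. Writing
\[
 \cLa(h)-\sLu(h)=\bigl[\cQa(\Fa,h)-\cQu(\cM,h)\bigr]+\bigl[\cQa(h,\Fa)-\cQu(h,\cM)\bigr],
\]
I would add and subtract $\cQu(\Fa,h)$ and $\cQu(h,\Fa)$ to further split each bracket as
\[
 \cQa(\Fa,h)-\cQu(\cM,h)=\bigl[\cQa(\Fa,h)-\cQu(\Fa,h)\bigr]+\cQu(\Fa-\cM,h),
\]
and analogously for the second bracket. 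Thus $\cLa-\sLu$ is a sum of four terms of two qualitatively different types.

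For the two \emph{inelastic--minus--elastic} terms, Proposition \ref{prop.MM32} yields a bound of the form $p(1-\alpha)\|\Fa\|_{W^{k,1}_v(\vv^{q+1}m)}\|h\|_{W^{k,1}_v(\vv^{q+1}m)}$ with $p(r)\to 0$ as $r\to 0$. Combining this with the uniform-in-$\alpha$ bound $\|\Fa\|_{W^{k,1}_v(\vv^{q+1}m)}\leq C$ from Lemma \ref{lem.BoundSobolevNorm} produces the desired small factor. For the two terms where the elastic operator $\cQu$ is applied with $\Fa-\cM$ in one slot, Proposition \ref{prop.MM31} gives a bilinear estimate in weighted Sobolev norms, and Lemma \ref{lem.DifferenceFaM} supplies a quantitative $\eta(\alpha)\to 0$ control on $\|\Fa-\cM\|_{W^{k,1}_v(\vv^{q+1}m)}$. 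Summing the four contributions yields $\|\cLa-\sLu\|_{\sB(E_1,E_0)}\leq\eta(\alpha)\to 0$.

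To handle the full Sobolev structure of $E_0=W^{s,1}_xW^{2,1}_v(\vv m)$ and $E_1=W^{s+2,1}_xW^{4,1}_v(\vv^2 m)$, I would use two standard facts: $x$-derivatives commute with every collision operator, so the $s$ extra derivatives in $x$ simply transfer onto $h$; and the Leibniz-type identity \eqref{eqn.Vderivative} distributes $v$-derivatives across $\cQa^\pm$ and $\cQu^\pm$, so each of the up to two velocity derivatives lands either on $h$ (absorbing into the $E_1$-norm) or on the steady state $\Fa$, $\cM$, $\Fa-\cM$. In the latter case the derivative's cost is controlled either uniformly via Lemma \ref{lem.BoundSobolevNorm} or quantitatively with a gain in $\alpha$ via Lemma \ref{lem.DifferenceFaM}, and the extra weight $\vv$ present in $E_1$ exactly matches the one-power loss in the collision-operator estimates of Proposition \ref{prop.MM31}.

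The only real difficulty is a bookkeeping one: keeping consistent the moment and regularity indices required so that every application of Proposition \ref{prop.MM31} or Proposition \ref{prop.MM32} falls within its range of validity, and making sure all the Sobolev norms of $\Fa$ that appear are of the finite order controlled by Lemma \ref{lem.BoundSobolevNorm} (uniformly in $\alpha$) and all the norms of $\Fa-\cM$ are of the kind controlled by Lemma \ref{lem.DifferenceFaM} (with the $\eta(\alpha)$ vanishing factor). Once this accounting is set up, the final $\eta(\alpha)$ is obtained as a finite sum of such vanishing factors, which is again a function tending to $0$ as $\alpha\to 1$.
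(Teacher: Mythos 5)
Your proposal is correct and follows essentially the same route as the paper: the same add-and-subtract decomposition into inelastic-minus-elastic terms at fixed $\Fa$ (handled by Proposition \ref{prop.MM32} together with the uniform bound of Lemma \ref{lem.BoundSobolevNorm}) plus elastic terms with $\Fa-\cM$ in one slot (handled by Proposition \ref{prop.MM31} and Lemma \ref{lem.DifferenceFaM}), with derivatives distributed via commutation in $x$ and the Leibniz identity \eqref{eqn.Vderivative} in $v$. The only detail left implicit, in both your write-up and the paper's, is that the loss parts cancel exactly in the inelastic-minus-elastic brackets since $\cQ^-$ is independent of the restitution coefficient, which is why Proposition \ref{prop.MM32} (stated for gain terms only) suffices there.
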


\begin{proof}
First of all notice that 
\begin{align*}
  \cLa-\sL_1&=\cTa-\cTu\\
    %&=\left(\cQa^+(h,\Fa)-\cQu^+(h,\Fa)\right)+\left(\cQa^+(\Fa,h)-\cQu^+(\Fa,h)\right)\\
    %&+\cQu(h,\Fa-\cM)+\cQu(\Fa-\cM,h)\\
    &=\left(\cTa^+(h)-\cTu^+(h)\right)+\cQu(h,\Fa-\cM)+\cQu(\Fa-\cM,h).
\end{align*}

Therefore, by Proposition \ref{prop.MM32} we have $\eta_1(\alpha)$ 
\begin{equation}\label{eqn.GainTaTu}
 \|\cTa^+(h)-\cTu^+(h)\|_{L^1_v(\vv^q m)}\leq\eta_1(\alpha)\|h\|_{L^1_v(\vv^{q+1} m)},
\end{equation}
with $\eta_1(\alpha)\to0$ when $\alpha\to1$. Moreover, by Proposition \ref{prop.MM31} and Lemma \ref{lem.DifferenceFaM} there exists $\eta_2(\alpha)$ such that
\[
 \|\cQu(h,\Fa-\cM)\|_{L^1_v(\vv^q m)}+\|\cQu(\Fa-\cM,h)\|_{L^1_v(\vv^q m)}\leq\eta_2(\alpha)\|h\|_{L^{1}_v(\vv^{q+1} m)},
\]
with $\eta_2(\alpha)\to0$ when $\alpha\to1$. Thus, taking $\eta'=\eta_1+\eta_2$ we have
\[
 \|\cTa-\cTu\|_{L^1_v(\vv^q m)}\leq \eta'(\alpha)\|h\|_{L^1_v(\vv^{q+1} m)}.\\
\]

Furthermore, using \eqref{eqn.Vderivative} we have that 
\begin{align*}
 \partial_v\left(\cLa^+(h)-\sL_1^+(h)\right) &=\partial_v\left(\cTa^+(h)-\cTu^+(h)\right)\\
  &=\cTa^+(\partial_v h)-\cTu^+(\partial_v h)+\left(\cQa^+(h,\partial_v\Fa)-\cQu^+(h,\partial_v\Fa)\right)\\
  &+\left(\cQa^+(\partial_v\Fa,h)-\cQu^+(\partial_v\Fa,h)\right)+\cQu(\partial_v h,\Fa-\cM)\\
  &+\cQu(\Fa-\cM,\partial_v h).
\end{align*}
Thus, by \eqref{eqn.GainTaTu} we have that
\[
 \|\cTa^+(\partial_v h)-\cTu^+(\partial_v h)\|_{L^1_v(\vv^q m)}\leq\eta_1(\alpha)\|\partial_v h\|_{L^1_v(\vv^{q+1} m)}.
\]

Proceeding as before by Proposition \ref{prop.MM32} we have that there exist $\eta_3$ converging to 0 as $\alpha$ goes to 1 such that
\begin{align*}
 \|\cQa^+(h,\partial_v\Fa)-\cQu^+(h,\partial_v\Fa)\|_{L^1_v(\vv^q m)}&+\|\cQa^+(\partial_v\Fa,h)-\cQu^+(\partial_v\Fa,h)\|_{L^1_v(\vv^q m)}\\
  &\leq \eta_3(\alpha)\|h\|_{L^1_v(\vv^{q+1} m)}.
\end{align*}
And using again Proposition \ref{prop.MM31} and Lemma \ref{lem.DifferenceFaM} there exist $\eta_4(\alpha)$ such that
\begin{align*}
 \|\cQu(h,\partial_v(\Fa-\cM))&+\cQu(\partial_v h,\Fa-\cM)\|_{L^1_v(\vv^q m)}\\
  &\leq \eta_4(\alpha)\left(\|h\|_{L^1_v(\vv^{q+1} m)}+\|\partial_v h\|_{L^1_v(\vv^{q+1} m)}\right).\\
\end{align*}

For higher-order derivatives we proceed in the same way and we can conclude that there exists some $\eta$ such that it tends to 0 as $\alpha$ tends to 1, and satisfies
\[
 \|\cLa-\sL_1\|_{E_0}\leq\eta(\alpha)\|h\|_{E_1}.
\]
In a similar way we obtain
\[
 \|\cLa-\sL_1\|_{E_{-1}}\leq\eta(\alpha)\|h\|_{E_0}.
\]
\end{proof}

For now on we fix $\delta$ as in Lemma \ref{lem.HypodissBadEj} and we write $\cA=\cAad$ and $\cB=\cBad$.

%---------------------------------------------------------------------------------------------------------
\subsection{Semigroup spectral analysis of the linearized operator}

This section is dedicated to present some results regarding the geometry of the spectrum of the linearized inelastic collision operator for a parameter close to 1. \\

\begin{proposition}\label{prop.SpectralGap}
There exists $\alpha_2\in [0,1)$ such that for any $\alpha\in[\alpha_2,1)$ and any $e\in(0,1]$, $\cLa$ satisfies the following properties in $E=W^{s,1}_xW^{2,1}_v(\vv m)$, $s\in\N^*$:
\begin{enumerate}[(a)]
\item \label{it.ThmSpectGap1}  $\Sigma(\cLa)\cap \Delta_{-a_2}=\{0\}$ where $a_2$ is given by Theorem \ref{thm.SpectralGapL1}. Moreover, 0 simple eigenvalue of $\cLa$ and $N(\cLa)=Span\{\Fa\}.$
\item \label{it.ThmSpectGap3} For any $a\in(0,\min(a_2,a_3))$, where $a_3$ is given by Lemma \ref{lem.HypodissBadEj}, the semigroup generated by $\cLa$ has the following decay property 
      \begin{equation}\label{eqn.SemigroupDecay}
       \|\SLa(t)(I-\PLaz)\|_{\sB(E)}\leq Ce^{-at},
      \end{equation}
      for all $t\geq0$ and for some $C>0$.
\end{enumerate}
\end{proposition}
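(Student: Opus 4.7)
The strategy is a perturbative argument around $\sL_1$ (Theorem~\ref{thm.SpectralGapL1}), exploiting the splitting $\cLa=\cAad+\cBad$ together with the size estimate $\|\cLa-\sL_1\|_{\sB(E_1,E_0)}\le\eta(\alpha)\to 0$ of Lemma~\ref{lem.BoundLaL1}.

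\textbf{Step 1: discreteness of $\Sigma(\cLa)$ in $\Delta_{-a_3}$.} By Lemma~\ref{lem.HypodissBadEj} we have $\Sigma(\cBad)\cap\Delta_{-a_3}=\emptyset$ with uniform bounds on the resolvent $R_{\cBad}(z)$. The algebraic factorization $\cLa-z=(I+\cAad R_{\cBad}(z))(\cBad-z)$ gives
\[
R_\cLa(z)=R_{\cBad}(z)\bigl(I+\cAad R_{\cBad}(z)\bigr)^{-1}
\]
wherever the inverse exists. By Lemma~\ref{lem.BoundednessAad} and compact Sobolev embedding for compactly supported functions, $\cAad$ is compact on $E$, hence so is $\cAad R_{\cBad}(z)$. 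The analytic Fredholm alternative then implies that $\Sigma(\cLa)\cap\Delta_{-a}$ consists of at most isolated eigenvalues of finite algebraic multiplicity for any $a<a_3$, with $R_\cLa(z)$ meromorphic there.

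\textbf{Step 2: comparison with $\sL_1$ and spectral counting.} To locate these eigenvalues we use the resolvent identity $R_\cLa(z)-R_{\sL_1}(z)=R_\cLa(z)(\sL_1-\cLa)R_{\sL_1}(z)$. Since $\sL_1-\cLa$ loses one velocity derivative, we iterate the splitting identity
\[
R_{\sL_1}(z)=\sum_{k=0}^{N-1}\bigl(R_{\cBud}(z)\cAud\bigr)^k R_{\cBud}(z)+\bigl(R_{\cBud}(z)\cAud\bigr)^N R_{\sL_1}(z),
\]
and the analogous identity for $R_\cLa$. Taking Laplace transforms of Lemma~\ref{lem.TnNormBoundLa} shows that for $N$ large enough, $(R_{\cBud}(z)\cAud)^N$ maps $E_0$ into $E_1$, uniformly on compact subsets of $\Delta_{-a_2}\setminus\{0\}$. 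Substituting these expansions yields a bound $\|R_\cLa(z)-R_{\sL_1}(z)\|_{\sB(E)}\le C(z)\,\eta(\alpha)$ on any such compact subset. Integrating on a small circle around $0$, continuity of the rank of a finite-rank spectral projector forces $\dim R(\PLaz)=\dim R(\PLzz)=1$ for $\alpha$ close enough to $1$; a similar contour argument around the (empty) set $\Sigma(\sL_1)\cap\Delta_{-a_2}\setminus\{0\}$ rules out any other eigenvalue of $\cLa$ in $\Delta_{-a_2}$. Mass conservation $\int\cLa(h)\,dv\,dx=0$ forces this single eigenvalue to be $0$, proving~(a).

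\textbf{Step 3: semigroup decay and main obstacle.} Part (b) follows via inverse Laplace transform,
\[
\SLa(t)(I-\PLaz)=\frac{1}{2\pi i}\int_{\Re z=-a}e^{zt}R_\cLa(z)(I-\PLaz)\,dz,
\]
for $a\in(0,\min(a_2,a_3))$, where the integrand is controlled by combining the factorization of Step~1 with the decay of $(R_{\cBad}(z)\cAad)^N$ along the vertical line, in the spirit of the spectral-mapping theorem of Gualdani--Mischler--Mouhot. The principal difficulty is Step~2: a naive perturbation via $R_{\sL_1}(\cLa-\sL_1)R_\cLa$ cannot be closed in $\sB(E)$ because $\cLa-\sL_1$ loses one velocity derivative. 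The factorization together with Lemma~\ref{lem.TnNormBoundLa} is precisely what provides the compensating derivative gain.
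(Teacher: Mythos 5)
Your overall architecture (perturbation around $\sL_1$ using Lemma \ref{lem.BoundLaL1}, the splitting $\cLa=\cAad+\cBad$, iterated regularization to compensate the derivative loss, and an inverse-Laplace/spectral-mapping argument for the decay) is the same as the paper's, which implements Steps 2--3 by invoking the corresponding lemmas of Tristani together with Lemmas \ref{lem.HypodissBadEj}, \ref{lem.BoundednessAad}, \ref{lem.TnNormBoundLa} and Theorem \ref{thm.SpectralMappingThm}. However, Step 1 of your argument contains a genuine gap: the claim that $\cAad$ is compact on $E=W^{s,1}_xW^{2,1}_v(\vv m)$ is false. Lemma \ref{lem.BoundednessAad} only says that $\cAad$ regularizes and localizes in the \emph{velocity} variable; it acts pointwise in $x$, so on $E$ it has the structure of (identity in $x$) tensor (compact in $v$). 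Taking $h_n(x,v)=g_n(x)\phi(v)$ with $(g_n)$ bounded in $W^{s,1}_x(\T^3)$ and without convergent subsequence, and $\cAad\phi\neq0$, shows $\cAad$ is not compact, hence neither is $\cAad R_{\cBad}(z)$, and the analytic Fredholm alternative cannot be applied. This is precisely the known obstruction in the spatially inhomogeneous setting, and it is the reason the paper never argues via compactness on $E$: instead it gains $x$-regularity through the commutator operator $D_t=t\nabla_x+\nabla_v$ in Lemma \ref{lem.BoundTn}, obtains the localization of $\Sigma(\sL_1)$ by enlargement from $H^{s'}_{x,v}(\cM^{-1/2})$ (Theorem \ref{thm.SpectralGapL1}), and then shows directly that $\cLa-z$ is invertible on $\Delta_{-a_2}\setminus B(0,\eta'(\alpha))$ by a Neumann-series perturbation of $R_{\sL_1}(z)$ -- essentially the content of your Step 2. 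So the discreteness you want is recovered without Fredholm theory, and your Step 1 should be deleted or replaced by that perturbative invertibility statement.

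Two smaller points. First, in Step 2 the assertion that ``mass conservation forces the single eigenvalue to be $0$'' is incomplete as stated: integrating $\cLa\varphi_\alpha=\xi_\alpha\varphi_\alpha$ only gives $\xi_\alpha\int\varphi_\alpha=0$, so you must also rule out $\int\varphi_\alpha\,dv=0$; the paper does this by noting that then $\int\Pi_{\cLa,\xi_\alpha}h\,dv=0$ for all $h$ while $\int\PLzz h\,dv$ can be nonzero, contradicting $\|\PLaa-\PLza\|\le\eta''(\alpha)<1$. Second, ``a contour argument around the empty set'' is not the mechanism that excludes other eigenvalues; what excludes them is the uniform invertibility of $\cLa-z$ on $\Delta_{-a_2}$ away from a small ball around $0$, obtained from the resolvent bound on $\sL_1$ there plus the $O(\eta(\alpha))$ smallness after the derivative-compensating factorization.
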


The proof of the proposition stated above is a straightforward adaptation of one presented in \cite[Proposition~2.14]{Tristani}. We shall only mention the main steps of the proof and we emphasize the few points which differs here (due to the replacement of the diffusive term by a linear scattering operator). \\

\begin{proof}
\noindent\textit{\textbf{Step 1: Localization of the spectrum of $\cLa$ and dimension of eigenspaces.}} \\

Notice that, by the result \cite[Lemma~ 2.16]{Tristani} (which we get due to Lemmas \ref{lem.HypodissBadEj}, \ref{lem.BoundLaL1}, \ref{lem.TnNormBoundLa} and Theorem \ref{thm.SpectralGapL1}) we know that there exist $\alpha'>\alpha_1$ such that $\cLa-z$ is invertible for any $z\in\Oa=\Delta_{-a_2}\backslash\{0\}$ and any $\alpha\geq \alpha'$. Moreover, we have that 
\[
 \Sigma(\cLa)\cap\Delta_{-a_2}\subset B(0,\eta'(\alpha)),
\]
where $\eta'$ goes to $0$ as $\alpha$ goes to 1. Furthermore, by \cite[Lemma~ 2.17]{Tristani} which remains true in our context as a result of Lemmas \ref{lem.HypodissBadEj} and \ref{lem.BoundLaL1}, there exist a function $\eta''(\alpha)$ such that
\begin{equation}\label{eqn.BoundProjectionsDif}
 \|\PLaa-\PLza\|_{\mathfrak{B}(E_0)}\leq\eta''(\alpha),
\end{equation}
with $\eta''(\alpha)\to 0 $ as $\alpha\to 0$. Hence, by Theorem \ref{thm.SpectralGapL1}, it implies that for $\alpha$ close to 1, we have
\[
 \dim R(\PLaa)=\dim R(\PLza)=1.\\
\]

Therefore, there exist $\alpha_2>\alpha'$ such that $\eta''(\alpha)<1$ for every $\alpha\in(\alpha_2,1]$. Also there exist $\xi_{\alpha}\in\C$ such that 
\[
 \Sigma(\cLa)\cap\Delta_{-a_2}=\{\xi_{\alpha}\}.
\]
Let us prove that $\xi_{\alpha}=0$. We argue by contradiction and assume that for $\alpha$ close to 1 we have $\xi_{\alpha}\neq 0$. Let $\varphi_{\alpha}$ be some normalized eigenfunction of $\cLa$ associated to $\xi_{\alpha}$, i.e. satisfies $\cLa\varphi_{\alpha}=\xi_{\alpha}\varphi_{\alpha}$. Integrating over $\R^3$ we get that
\[
 \int_{\R^3}\varphi_{\alpha}(v)dv=0.
\] 
For any $h\in E_0$ there exist $\rho=\rho(\alpha,h)$ and $\rho'=\rho'(h)$ such that $\Pi_{\cLa,\xi_{\alpha}} h=\rho\varphi_{\alpha}$ while $\Pi_{\sL_1,0}h=\rho'\cM$. Hence, we have 
\[
 \int_{\R^3}\Pi_{\cLa,\xi_{\alpha}} hdv=0\quand \int_{\R^3}\Pi_{\sL_1,0}h dv=\rho',
\]
which contradicts \eqref{eqn.BoundProjectionsDif}. Therefore, $\xi_{\alpha}=0$. Furthermore, $0$ is a simple eigenvalue of $\cLa$ since $\Fa$ is the unique steady state of $\cLa$ satisfying $\int_{\R^3}\Fa dv=1$.\\

\noindent\textit{\textbf{Step 2: Semigroup decay.}} \\

In order to prove the estimate on the semigroup decay (\ref{eqn.SemigroupDecay}) we apply the Spectral Mapping Theorem \ref{thm.SpectralMappingThm} with $a=\max\{-a_2,-a_3\}<0$. First of all notice that $E_1\subset D(\cLa^2)\subset E_0$. Moreover, by the results presented in Lemmas \ref{lem.HypodissBadEj}, \ref{lem.BoundednessAad} and \ref{lem.TnNormBoundLa} the assumptions \ref{it.SpectralMapThmI}, \ref{it.SpectralMapThmII}, and \ref{it.SpectralMapThmIII} of Theorem \ref{thm.SpectralMappingThm} are satisfied.\\ 

Furthermore, the condition \ref{it.SpectralMapThm1} in Theorem \ref{thm.SpectralMappingThm} is also satisfied by \textbf{Step 1}. Thus we have the decay result \eqref{eqn.SemigroupDecay} for any $a'\in(0,\min\{a_2,a_3\})$. This concludes the proof of Proposition \ref{prop.SpectralGap}.\\
\end{proof}

Combining the results of Lemmas \ref{lem.HypodissBadEj}, \ref{lem.BoundednessAad} and Proposition \ref{prop.SpectralGap} we fulfilled the assumptions of Theorem \ref{thm.Enlargement}. Therefore, we have a localized spectrum and exponential decay of the semigroup of $\cLa$ in a larger space:\\

\begin{theorem}\label{thm.ExponentialDecay}
There exist $\alpha_2\in (0,1]$ such that for any $\alpha\in[\alpha_2,1)$ and any $e\in(0,1]$, $\cLa$ satisfies the following properties in $\cE=W^{s,1}_x L^1_v(m)$, $s\geq2$:
\begin{enumerate}
\item The spectrum $\Sigma(\cLa)$ satisfies the separation property: $\Sigma(\cLa)\cap \Delta_{-a_2}=\{0\}$ where $a_2$ is given by Theorem \ref{thm.SpectralGapL1} and $N(\cLa)=\{\Fa\}$.
\item For any $a\in(0,\min\{a_2,a_3\})$, where $a_3$ is provided by Lemma \ref{lem.HypodissBadEj}, the semigroup generated by $\cLa$ has the following decay property for every $t\geq0$
\begin{equation}\label{eqn.DecayEstimate}
 \left\|\SLa(t)(\id-\PLaz)\right\|_{\sB(\cE)}\leq Ce^{-at},
\end{equation}
for some $C>0$.
\end{enumerate}
\end{theorem}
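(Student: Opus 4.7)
The plan is to deduce the statement by applying the abstract enlargement theorem (Theorem \ref{thm.Enlargement}) with reference space $E = W^{s,1}_x W^{2,1}_v(\vv m)$ and target space $\cE = W^{s,1}_x L^1_v(m)$, using exactly the same splitting $\cLa = \cA + \cB = \cAad + \cBad$ as in the previous section. The role of Proposition \ref{prop.SpectralGap} is to provide the spectral/semigroup input in the smaller space $E$; the enlargement theorem then transfers it to $\cE$ provided three structural conditions on $\cA$, $\cB$ and the iterated convolutions $T_n := (\cA S_{\cB})^{(*n)}$ are in place.

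First I would verify the hypodissipativity input: by Lemma \ref{lem.HypodissBadEj}, $\cBad + a_3$ is hypodissipative simultaneously in $E$ and in $\cE$ for the same $\delta$ and $a_3 > 0$, and it generates a $C_0$-semigroup in both (using the construction sketched in Appendix \ref{sec.Semi}). This immediately yields $\|S_{\cBad}(t)\|_{\sB(\cE)} + \|S_{\cBad}(t)\|_{\sB(E)} \leq C e^{-a_3 t}$. Second, by Lemma \ref{lem.BoundednessAad}, $\cAad$ is bounded from $\cE$ into $E$ (in fact into $H^{s+1}_v$ with compact support, so into $W^{s,1}_x W^{2,1}_v(\vv m)$ after combining with the trivial $x$-regularity), with an explicit $\delta$-dependent bound. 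Third, the iterated convolution estimates of Lemma \ref{lem.TnNormBoundLa} already supply, for $n$ large enough and any $a' \in (0,a_3)$, the bound $\|T_n(t)\|_{\sB(E_0, E_1)} \leq C_{a'} e^{-a' t}$. Composing once more with $\cA S_{\cB}$, whose $\cE \to E$ action is controlled by the previous two items, gives
\[
\|T_{n+1}(t)\|_{\sB(\cE, E)} \leq C'_{a'} e^{-a' t}
\]
by the standard convolution argument (splitting $[0,t]$ into $[0,t/2]$ and $[t/2,t]$ and using the decay of $S_{\cBad}$ on $\cE$ together with the $E_0 \to E_1 \subset E$ decay of $T_n$).

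With these three ingredients, the enlargement theorem (Theorem \ref{thm.Enlargement}) applies: its hypothesis on the localization of $\Sigma(\cLa) \cap \Delta_{-a_2}$ and on the projected semigroup decay in $E$ is exactly what Proposition \ref{prop.SpectralGap} guarantees (taking $\alpha \in [\alpha_2, 1)$, $a \in (0, \min\{a_2, a_3\})$). The conclusion of the theorem then transports these two facts to the larger space $\cE$: the spectrum of $\cLa$ in $\cE$ still satisfies $\Sigma(\cLa) \cap \Delta_{-a_2} = \{0\}$, the null space is still $\mathrm{Span}\{\Fa\}$ (which lies in $\cE$), and the projected semigroup obeys the decay estimate~\eqref{eqn.DecayEstimate}.

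The only nontrivial verification I anticipate is the $\cE \to E$ bound for $T_{n+1}(t)$: one needs to combine the single-step regularization $\cA: \cE \to E$ from Lemma \ref{lem.BoundednessAad} with the $n$-fold regularization $T_n: E_0 \to E_1$ from Lemma \ref{lem.TnNormBoundLa}, while paying a time-integrable singularity arising from the constant-in-time bound on $\cA S_{\cB}$ on $\cE$. This is routine once the splitting $\int_0^t = \int_0^{t/2} + \int_{t/2}^t$ is set up, but it is the only place where the particular weights and Sobolev orders in the definitions of $\cE$, $E$, $E_1$ are used in a nontrivial way. Once this convolution estimate is in hand, the rest of the argument is a direct citation of Theorems \ref{thm.SpectralGapL1}, \ref{thm.Enlargement} and Proposition \ref{prop.SpectralGap}.
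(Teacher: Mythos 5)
Your proposal is correct and follows essentially the same route as the paper, which proves this theorem by citing Lemmas \ref{lem.HypodissBadEj} and \ref{lem.BoundednessAad} together with Proposition \ref{prop.SpectralGap} as the hypotheses of the Enlargement Theorem \ref{thm.Enlargement}; your extra composition step upgrading the $\sB(E_0,E_1)$ bound of Lemma \ref{lem.TnNormBoundLa} to a $\sB(\cE,E)$ bound is exactly the verification the paper leaves implicit (and performs explicitly in the proof of Theorem \ref{thm.SpectralGapL1}). The only quibble is that no time singularity actually arises in that convolution, since $\cA S_{\cB}(t)$ is bounded $\cE\to E$ uniformly in $t$ with exponential decay, so the splitting of $[0,t]$ is unnecessary.
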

%------------------------------------------------------------------------------------------------------
%Nonlinear operator
%------------------------------------------------------------------------------------------------------
\section{The nonlinear Boltzmann equation}\label{sec.Nonlinear}

Let us fix the integer $s>6$. Consider the Banach spaces 
\begin{align*}
&\cE_1:=W^{s,1}_{x}L^1_v(\vv m),\\
&\cE:=W_{x}^{s,1}L_{v}^{1}(m).
\end{align*}

Consider the following norm in $\cE$
\begin{equation}\label{eqn.BanachNorm}
 \vertiii{h}_{\cE}:=\eta\|h\|_{\cE}+\int_0^{+\infty}\|\SLa(\tau)(I-\PLaz)h\|_{\cE} d\tau,
\end{equation}
for $\eta>0$. This norm is well-defined thanks to estimate \eqref{eqn.DecayEstimate} for $\alpha$ close to 1. Furthermore, we define $\vertiii{\cdot}_{\cE_1}$ as in \eqref{eqn.BanachNorm} for the space $\cE_1$.\\

For this Banach norm the semigroup is not only dissipative, it also has a stronger dissipativity property: the damping term in the energy estimate controls the norm of the graph of the collision operator. More precisely:\\

\begin{proposition}\label{prop.NormDerivative}
Consider $\alpha\in[\alpha_2,1)$. There exist $\eta>0$ and $K>0$ such that for any initial datum $h_{in}\in\cE$ satisfying $\PLaz h_{in}=0$, the solution $h_t:=\SLa(t)h_{in}$ to the initial value problem \eqref{eqn.LinearEqn} satisfies for every $t\geq 0$
\[
 \frac{d}{dt}\vertiii{h_t}_{\cE}\leq -K\vertiii{h_t}_{\cE_1}.
\] 
\end{proposition}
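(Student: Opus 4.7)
The plan is to compute $\tfrac{d}{dt}\vertiii{h_t}_{\cE}$ directly from the definition in \eqref{eqn.BanachNorm}, using the splitting $\cLa=\cA+\cB$ provided by Lemma \ref{lem.BoundednessAad} and Lemma \ref{lem.HypodissBadEj} together with the exponential decay estimate of Theorem \ref{thm.ExponentialDecay}. Since $\PLaz h_{in}=0$ and $\PLaz$ commutes with the semigroup, we have $\PLaz h_t=0$ for every $t\geq0$, so $(I-\PLaz)h_t=h_t$. Using the semigroup identity $\SLa(\tau)h_t=h_{t+\tau}$, the integral term in \eqref{eqn.BanachNorm} becomes
\[
\int_0^{+\infty}\|\SLa(\tau)(I-\PLaz)h_t\|_{\cE}\,d\tau=\int_t^{+\infty}\|h_s\|_{\cE}\,ds,
\]
whose derivative in $t$ is simply $-\|h_t\|_{\cE}$.

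For the first term $\eta\|h_t\|_{\cE}$, I would differentiate along the flow using $\partial_t h_t=\cA h_t+\cB h_t$. The key observation (which is already implicit in the proofs of Lemmas \ref{lem.HypodissBed}--\ref{lem.HypodissBad}) is that the hypodissipativity estimate for $\cB=\cBad$ does not merely give $-a_3\|h_t\|_{\cE}$: in fact the inequality produced in Step~1 of those proofs bounds the energy dissipation by the \emph{stronger} norm with weight $\vv^{q+1}$, that is
\[
\int\cB(h_t)\,\mathrm{sign}(h_t)\,m\,dx\,dv\leq -a_3\|h_t\|_{\cE_1},
\]
and the higher-order analogues carry the same strengthening. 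Combined with the boundedness $\|\cA h_t\|_{\cE}\leq C_{\cA}\|h_t\|_{\cE}$ from Lemma \ref{lem.BoundednessAad}, this yields
\[
\frac{d}{dt}\vertiii{h_t}_{\cE}\leq -\eta a_3\|h_t\|_{\cE_1}+(\eta C_{\cA}-1)\|h_t\|_{\cE}.
\]
Choosing $\eta\leq 1/(2C_{\cA})$, the right-hand side is dominated by $-\eta a_3\|h_t\|_{\cE_1}-\tfrac12\|h_t\|_{\cE}$.

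To convert this into a bound by $-K\vertiii{h_t}_{\cE_1}$, I need to control $\int_t^{+\infty}\|h_s\|_{\cE_1}\,ds$ by a quantity that already appears on the right. Integrating the energy inequality $a_3\|h_s\|_{\cE_1}\leq C_{\cA}\|h_s\|_{\cE}-\tfrac{d}{ds}\|h_s\|_{\cE}$ from $t$ to $+\infty$ and using $\|h_s\|_{\cE}\to 0$ (from Theorem \ref{thm.ExponentialDecay}) together with $\int_t^{+\infty}\|h_s\|_{\cE}\,ds\leq (C_0/a)\|h_t\|_{\cE}$ gives
\[
\int_t^{+\infty}\|h_s\|_{\cE_1}\,ds\leq C_1\|h_t\|_{\cE}
\]
for some explicit $C_1>0$, and hence $\vertiii{h_t}_{\cE_1}\leq \eta\|h_t\|_{\cE_1}+C_1\|h_t\|_{\cE}$. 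Picking $K=\min(a_3,\,1/(2C_1))$ then yields $\tfrac{d}{dt}\vertiii{h_t}_{\cE}\leq -K\vertiii{h_t}_{\cE_1}$, as desired.

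The step I expect to be most delicate is the extraction of $-a_3\|h_t\|_{\cE_1}$ (rather than merely $-a_3\|h_t\|_{\cE}$) from the hypodissipativity of $\cB$: one must revisit the sign-computation of Lemma \ref{lem.HypodissBad} and verify that in the space $\cE=W^{s,1}_xL^1_v(m)$ (with no velocity derivatives and weight $m$ of subexponential growth) the boundary terms $I_1,\dots,I_4$ and the collision-frequency contribution all combine to produce dissipation in the $\vv m$-weighted norm. A secondary technical point is ensuring that the exponential-decay bound $\|\SLa(t)(I-\PLaz)\|\leq Ce^{-at}$ of Theorem \ref{thm.ExponentialDecay} also holds in $\cE_1$ (needed only implicitly, to justify $\|h_T\|_{\cE}\to 0$ and the finiteness of $\int_t^{+\infty}\|h_s\|_{\cE_1}\,ds$), which follows from the same enlargement argument applied with the weight $\vv m$ in place of $m$.
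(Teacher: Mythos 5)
Your proposal is correct and is essentially the argument the paper invokes: the paper's proof simply defers to \cite[Proposition~2.23]{Tristani} after noting the two ingredients you use (dissipativity of $\cB$ with gain of the $\vv$-weight from Lemma \ref{lem.HypodissBad}, boundedness of $\cA$ from Lemma \ref{lem.BoundednessAad}) and the key observation $\PLaz h_t=\SLa(t)\PLaz h_{in}=0$, which you also identify. Your reconstruction — differentiating the integral term to produce $-\|h_t\|_{\cE}$, extracting $-a_3\|h_t\|_{\cE_1}$ from the sign-computation (valid in $\cE$ since only $x$-derivatives occur and they commute with $\cB$), and closing via the integrated energy inequality to get $\int_t^{\infty}\|h_s\|_{\cE_1}\,ds\lesssim\|h_t\|_{\cE}$ — matches Tristani's proof step for step.
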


Due to the dissipativity of $\cB$ proved in Lemma \ref{lem.HypodissBad} and the bounds on $\cA$ in Lemma \ref{lem.BoundednessAad},  the proof of this result follows as in \cite[Proposition~2.23]{Tristani}. A fundamental observation in our case is that 
\[
 \PLaz h_t=\PLaz \SLa h_{in}=\SLa \PLaz h_{in}=0.
\]
Therefore, Tristani's argument remains valid even though we no longer have conservation of the momentum.\\

We can now proceed to prove our main result. Namely, the existence of solutions of \eqref{eqn.BoltzmannEqn} in the close-to-equilibrium regime:

\begin{theorem}\label{thm.Main}
Consider constant restitution coefficient $\alpha\in[\alpha_0,1]$, where $\alpha_0$ is given by Theorem \ref{thm.ExponentialDecay} and any $e\in(0,1]$. There exist constructive constant $\varepsilon>0$ such that for any initial datum $f_{in}\in\cE$ satisfying
\[
 \|f_{in}-\Fa\|_{\cE}\leq\varepsilon,
\]
and $f_{in}$ has the same global mass as the equilibrium $\Fa$, there exist a unique global solution $f\in L^{\infty}_t(\cE)\cap L^1_t(\cE_1)$ to \eqref{eqn.BoltzmannEqn}.\\

Moreover, consider $a\in(0,\min\{a_2,a_3\})$, where $a_2$ is given by Theorem \ref{thm.SpectralGapL1} and $a_3$ by Lemma \ref{lem.HypodissBadEj}. This solution satisfies that for some constructive constant $C\geq 1$ and for every $t\geq 0$
\[
 \|f-\Fa\|_{\cE}\leq Ce^{-at}\|f_{in}-\Fa\|_{\cE}.
\]
\end{theorem}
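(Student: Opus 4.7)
The plan is to realize Theorem~\ref{thm.Main} as a perturbation of the linearized theorem already in hand. Writing $f = \Fa + h$, the equation \eqref{eqn.BoltzmannEqn} becomes
\[
 \partial_t h = \cLa(h) + \cQa(h,h), \qquad h_{\mathrm{in}} := f_{\mathrm{in}} - \Fa,
\]
and the smallness assumption reads $\|h_{\mathrm{in}}\|_{\cE} \le \varepsilon$. The first observation is that the mass of $h$ is conserved: both $\cQa(h,h)$ and $\cL(h)$ integrate to $0$ in $v$, and the transport term integrates to $0$ in $x$ on $\T^3$. Since $f_{\mathrm{in}}$ has the same global mass as $\Fa$, we have $\int h_{\mathrm{in}}\, dx\, dv = 0$, hence $\int h_t\, dx\, dv = 0$ for all $t \ge 0$. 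Because $N(\cLa) = \mathrm{Span}\{\Fa\}$ by Theorem~\ref{thm.ExponentialDecay}, the associated spectral projector is $\PLaz h = \Fa \int h$, so $\PLaz h_t = 0$ throughout the evolution. This is the structural fact that makes the triple norm of \eqref{eqn.BanachNorm} finite along solutions and that lets Proposition~\ref{prop.NormDerivative} apply to $h_t$.

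The core of the argument is a closed energy estimate on $\vertiii{\cdot}_{\cE}$. Apply the linearized dissipation of Proposition~\ref{prop.NormDerivative} to the linear part and add the Duhamel contribution of the quadratic term to obtain, heuristically,
\begin{equation*}
 \frac{d}{dt} \vertiii{h_t}_{\cE} \le -K \vertiii{h_t}_{\cE_1} + \vertiii{\cQa(h_t,h_t)}_{\cE}.
\end{equation*}
The input I would use here is Tristani's bilinear estimate for the inelastic collision operator, which yields a bound of the shape
\[
 \vertiii{\cQa(h,h)}_{\cE} \le C_{\alpha} \, \vertiii{h}_{\cE}\, \vertiii{h}_{\cE_1},
\]
with $s > 6$ providing the algebra/Sobolev embedding on the $x$-side. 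Combining the two inequalities,
\begin{equation*}
 \frac{d}{dt} \vertiii{h_t}_{\cE} \le \bigl( -K + C_{\alpha} \vertiii{h_t}_{\cE} \bigr) \vertiii{h_t}_{\cE_1}.
\end{equation*}
Choose $\varepsilon$ so that the initial triple norm $\vertiii{h_{\mathrm{in}}}_{\cE}$ is strictly less than $K/(2C_\alpha)$. A continuity argument then shows that $\vertiii{h_t}_{\cE}$ stays below this threshold on its maximal interval of existence, and consequently $\frac{d}{dt}\vertiii{h_t}_{\cE} \le -\tfrac{K}{2}\vertiii{h_t}_{\cE_1} \le 0$, which provides the uniform-in-time a priori bound needed to rule out blow-up.

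Existence and uniqueness I would obtain by the standard Picard iteration in the Duhamel formulation
\begin{equation*}
 h^{(n+1)}_t = \SLa(t) h_{\mathrm{in}} + \int_0^t \SLa(t-s)\, \cQa(h^{(n)}_s, h^{(n)}_s)\, ds,
\end{equation*}
starting from $h^{(0)} \equiv 0$. Theorem~\ref{thm.ExponentialDecay} gives the necessary bounds on $\SLa$ in $\cE$, the projection $\PLaz$ remains zero along the scheme because $\cQa(\cdot,\cdot)$ and $\cLa$ preserve mass, and the same bilinear estimate used above shows that the iteration is a contraction on the ball of radius $2\vertiii{h_{\mathrm{in}}}_{\cE}$ in $L^{\infty}_t(\cE) \cap L^1_t(\cE_1)$ (with triple norms), provided $\varepsilon$ is taken small enough. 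This yields the unique global solution in the required space. For the exponential rate, since the weight of $\cE_1$ dominates that of $\cE$, one has $\vertiii{h}_{\cE} \lesssim \vertiii{h}_{\cE_1}$, so the inequality $\frac{d}{dt}\vertiii{h_t}_{\cE} \le -\tfrac{K}{2}\vertiii{h_t}_{\cE_1}$ upgrades to $\frac{d}{dt}\vertiii{h_t}_{\cE} \le -a \vertiii{h_t}_{\cE}$ for some $a \in (0,\min\{a_2,a_3\})$, and Grönwall yields the claimed bound $\|f-\Fa\|_{\cE} \le C e^{-at}\|f_{\mathrm{in}}-\Fa\|_{\cE}$ after comparing $\vertiii{\cdot}_{\cE}$ with $\|\cdot\|_{\cE}$.

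The main obstacle is the bilinear estimate on $\cQa$ in the triple norm on $\cE$: one must control the quadratic inelastic collision operator with the weight $m(v) = e^{b\vv^{\beta}}$ for $\beta \in (0,1)$, pay carefully for the $\vv$-loss by the $\cE_1$ factor, and absorb the $x$-regularity through the algebra property of $W^{s,1}_x$ for $s > 6$. This is where the strong version of Proposition~\ref{prop.NormDerivative} (with the graph norm $\vertiii{\cdot}_{\cE_1}$ on the right-hand side, rather than merely $\vertiii{\cdot}_{\cE}$) is essential, since otherwise the dissipation cannot control the weight loss coming from the Boltzmann collision. The remaining technical care concerns checking that $\PLaz h_t \equiv 0$ along the iterative scheme—which reduces, thanks to uniqueness of the steady state $\Fa$ with unit mass and to the one-dimensionality of $N(\cLa)$, simply to conservation of mass along each Picard iterate.
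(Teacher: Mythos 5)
Your proposal is correct and follows essentially the same route as the paper, whose proof simply defers to the iterative scheme of Tristani's Theorem 3.2 built on Proposition \ref{prop.NormDerivative} and the bilinear estimate of Tristani's Lemma 3.1; your write-up supplies the details (mass conservation forcing $\PLaz h_t=0$, the closed differential inequality in the norm $\vertiii{\cdot}_{\cE}$, and the Duhamel/contraction step) that the paper leaves implicit. No gaps.
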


\begin{proof}
Due to the study of the semigroup in Theorem \ref{thm.ExponentialDecay} one can build a solution by the use of an iterative scheme whose convergence is ensured due to a priori estimates coming from estimates of the semigroup of the linearized operator estabilshed in Proposition \ref{prop.NormDerivative}. This is a standar procedure and follows exactly as in the proof of \cite[Theorem~3.2]{Tristani}.\\
\end{proof}

\begin{remark}
The assumption $s>6$ is a technical condition that guarantees the continuous embedding of $W_x^{s/2,1}\subset L_x^{\infty}(\T^3)$ in order to have bilinear estimate \cite[Lemma~ 3.1]{Tristani}.\\
\end{remark}
%------------------------------------------------------------------------------------------------------
%Acknowledgement
%------------------------------------------------------------------------------------------------------
\section*{Acknowledgement}

We thank R. Alonso and B. Lods for helpful discussions on the topic and the suggestion of the problem. The author was partially founded by CAPES/PROEX, PUC-Rio and Petrobras. 

%------------------------------------------------------------------------------------------------------
%Appendix
%------------------------------------------------------------------------------------------------------

\appendix

%------------------------------------------------------------------------------------------------------
%Appendix A
%------------------------------------------------------------------------------------------------------
\section{Interpolation Inequalities and norm bounds}\label{sec.Inequalities}

In this section we are going to present several inequalities and norm bounds that we need along this work. Let us begin with some useful inequalities.\\

\begin{lemma}\label{lem.ExistenceCbeta}
 Let $\C_{\beta}=2^{\beta/2}-1$, then if $x\leq a/2$ we have
\[
 (a-x)^{\beta/2}\leq a^{\beta/2}-C_{\beta}x^{\beta/2}.
\]
\end{lemma}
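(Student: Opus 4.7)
The plan is to reduce the inequality to a one-variable statement by homogeneity and then verify it by direct calculus. Since both sides of $(a-x)^{\beta/2}\le a^{\beta/2}-C_\beta x^{\beta/2}$ are positively homogeneous of degree $\beta/2$ in the pair $(a,x)$, I would divide through by $a^{\beta/2}$ and set $t=x/a\in[0,1/2]$. The claim then becomes
\[
\phi(t):=(1-t)^{\beta/2}+C_\beta\, t^{\beta/2}\le 1\qquad\text{for all }t\in[0,1/2],
\]
where $C_\beta=2^{\beta/2}-1$.

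Next I would verify the boundary values. At $t=0$ one has $\phi(0)=1$ trivially, and at the other endpoint
\[
\phi(1/2)=2^{-\beta/2}+(2^{\beta/2}-1)\,2^{-\beta/2}=2^{-\beta/2}\bigl(1+2^{\beta/2}-1\bigr)=1.
\]
So equality already holds at both endpoints — and this is precisely the condition that pins down the explicit constant $C_\beta=2^{\beta/2}-1$. Consequently the entire content of the lemma is that $\phi$ does not exceed $1$ in the interior of $[0,1/2]$.

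To rule out an interior violation, the natural move is to compute
\[
\phi''(t)=\tfrac{\beta}{2}\bigl(\tfrac{\beta}{2}-1\bigr)\Bigl[(1-t)^{\beta/2-2}+C_\beta\, t^{\beta/2-2}\Bigr],
\]
which has a definite sign on $(0,1/2)$ — namely, the sign of $\beta/2-1$. Once one obtains convexity of $\phi$ on the open interval, the inequality follows immediately: a convex function on a closed interval is bounded above by the maximum of its endpoint values, and here both endpoints give $1$, so $\phi(t)\le 1$ throughout $[0,1/2]$, which is the required inequality. The sign analysis of $\phi''$ — in particular checking that the relevant range of $\beta$ really produces convexity so that the endpoints of $[0,1/2]$ are maximizers and not minimizers of $\phi$ — is the delicate step I expect to need the most care; the rescaling and the two endpoint identities are routine by comparison.
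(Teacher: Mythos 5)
Your reduction by homogeneity and the two endpoint identities are correct, but the crucial step — the sign of $\phi''$ — goes the wrong way, and in a manner that cannot be repaired. You correctly compute
\[
\phi''(t)=\tfrac{\beta}{2}\Bigl(\tfrac{\beta}{2}-1\Bigr)\Bigl[(1-t)^{\beta/2-2}+C_{\beta}\,t^{\beta/2-2}\Bigr],
\]
and since the paper takes $\beta\in(0,1)$ we have $\tfrac{\beta}{2}-1<0$ while the bracket is positive, so $\phi''<0$ on $(0,1/2)$: the function $\phi$ is strictly \emph{concave}, not convex. A concave function lies above the chord joining its endpoint values, so from $\phi(0)=\phi(1/2)=1$ your own computation yields $\phi(t)>1$ for all $t\in(0,1/2)$ — the reverse of what the lemma asserts. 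This is not a defect of your method alone: as $x\to0^+$ one has $a^{\beta/2}-(a-x)^{\beta/2}=O(x)$ while $C x^{\beta/2}\gg x$ for any $C>0$, so no positive constant can make the stated inequality hold uniformly on $0<x\le a/2$. Concretely, for $\beta=1/2$, $a=1$, $x=10^{-2}$ one finds $(a-x)^{1/4}\approx0.997$ while $a^{1/4}-(2^{1/4}-1)x^{1/4}\approx0.940$.

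For comparison, the paper's own proof substitutes $y=a/x\ge2$ and tries to show $(y-1)^{p}\le y^{p}-2^{p}+1$ with $p=\beta/2$, but the chain it uses contains the step $|y-2|^{p}\le y^{p}-2^{p}$, which is backwards: for $0<p<1$ subadditivity gives $y^{p}\le(y-2)^{p}+2^{p}$, i.e.\ $y^{p}-2^{p}\le(y-2)^{p}$. So the paper does not establish the lemma either, and your second-derivative analysis in fact explains the true state of affairs: since $\phi$ is concave with $\phi'(1/2)<0$, the inequality holds on the complementary range $x\ge a/2$ (where $\phi$ is decreasing) but fails on $x\le a/2$, which is the range both the lemma and its application in Lemma \ref{lem.HypodissBed} require. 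The statement needs to be reformulated (for instance, retaining the $C_{\beta}$-gain only when $x$ is bounded below by a fixed fraction of $a$, and using the trivial bound $(a-x)^{\beta/2}\le a^{\beta/2}$ otherwise) rather than patched within either your scheme or the paper's.
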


\begin{proof}
Since $x\leq a/2$ then $2\leq y=a/x$ and
\[
 (a-x)^{\beta/2}=x^{\beta/2}(y-1)^{\beta/2}.
\]
Recall that if $0<p<1$ then for every $A,B>0$ we have that $|A^p-B^p|\leq |A-B|^p$. Thus
\[
 (A-B)^p \leq -|A^p-B^p| \leq -B^p+A^p.
\]
Hence, taking $A=y\geq 2=B$ we get
\[
 |y-1|^p-1\leq |(y-1)^p-1^p|\leq |y-2|^p\leq y^p-2^p.
\]
Therefore, if $p=\beta/2$
\[
 (a-x)^{\beta/2}\leq x^{\beta/2}(y^{\beta/2}-2^p+1),
\]
taking $C_{\beta}=2^{\beta/2}-1$ we conclude our result.\\

\end{proof}

\begin{lemma}\label{lem.ExistenceCb}
Given any $\gamma>0$ there exist a positive constant $C_{\gamma}>0$ such that
\[
 b|v_*|^{\beta}-\beta_0|v_*|^2\leq C_{\gamma}-\gamma|v_*|^{\beta}.
\]
\end{lemma}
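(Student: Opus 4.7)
The plan is to reduce the inequality to the statement that a single real-valued function of $r = |v_*| \geq 0$ is bounded above, and then observe that this function tends to $-\infty$ at infinity, so its supremum is finite.

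First I would rearrange the desired estimate as
\[
 (b+\gamma)|v_*|^{\beta} - \beta_0 |v_*|^2 \leq C_{\gamma},
\]
and set $r = |v_*|$, so the task becomes showing $\sup_{r \geq 0} f(r) < \infty$ for
\[
 f(r) := (b+\gamma) r^{\beta} - \beta_0 r^{2}.
\]
Since $\beta \in (0,1)$ and $\beta_0 > 0$, the quadratic term dominates at infinity and $f(r) \to -\infty$ as $r \to \infty$. Combined with $f(0) = 0$ and continuity of $f$ on $[0,\infty)$, this forces $f$ to attain a finite maximum on $[0,\infty)$.

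To make the constant explicit, I would differentiate: $f'(r) = (b+\gamma)\beta r^{\beta-1} - 2\beta_0 r$, which vanishes at the unique positive critical point
\[
 r_{*} = \left(\frac{(b+\gamma)\beta}{2\beta_0}\right)^{\frac{1}{2-\beta}}.
\]
Since $f''(r_{*}) < 0$ (the second derivative is $-(b+\gamma)\beta(1-\beta) r^{\beta-2} - 2\beta_0 < 0$), this is the global maximum on $(0,\infty)$. Taking $C_{\gamma} := \max\{0, f(r_{*})\}$ yields the claim. There is no real obstacle here: the argument is completely elementary, the only thing to be careful about is keeping track that $b, \beta, \beta_0$ are fixed positive constants independent of $v_*$, so that $C_\gamma$ genuinely depends only on $\gamma$ (and on the fixed parameters $b, \beta, \beta_0$) and not on $v_*$.
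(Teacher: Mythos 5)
Your proposal is correct and follows essentially the same route as the paper: both rearrange the inequality to $(b+\gamma)|v_*|^{\beta}-\beta_0|v_*|^2\leq C_{\gamma}$, study the function $f(r)=(b+\gamma)r^{\beta}-\beta_0 r^{2}$, and take $C_{\gamma}$ to be its maximum, attained at the critical point determined by $r^{2-\beta}=\beta(b+\gamma)/(2\beta_0)$. Your version is slightly more careful in verifying that this critical point is a global maximum and that the constant depends only on the fixed parameters, but there is no substantive difference.
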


\begin{proof}
Given $\gamma>0$ for $x\geq 0$ consider the function 
\[
 f(x)=(b+\gamma)x^{\beta}-\beta_0 x^2.
\]
It is easy to see that this function attains its maximum when $x^{2-\beta}=\beta(b+\gamma)/(2\beta_0)$. Therefore, it is enough to take $C_{\gamma}$ as this maximum.
\end{proof}

Let's recall an interpolation inequality given by \cite[Lemma~B.1]{MischlerMouhotSelfSimilarity} which can be easily extended to other weights of type $\vv^q m$.

\begin{lemma}\label{lem.Interpolation}
For any $k,q\in\N$, there exists $C>0$ such that for any $h\in H^{k'}\cap L^1(m^{12})$ with $k'=8k+7(1+3/2)$ 
\[
 \|h\|_{W^{k,1}_v(\vv^q m)}\leq C\|h\|^{1/8}_{H_v^{k'}}\|h\|^{1/8}_{L^1_v(m^{12})}\|h\|^{3/4}_{L^1_v(m)}.
\]
\end{lemma}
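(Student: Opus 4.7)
The plan is to upgrade the unweighted Gagliardo--Nirenberg--type interpolation of Mischler--Mouhot (Lemma B.1 of [MischlerMouhotSelfSimilarity]), which has the rough form $\|h\|_{W^{k,1}_v} \leq C \|h\|_{H^{k'}_v}^{1/8}\|h\|_{L^1_v}^{7/8}$, so that the weight $\vv^q m$ appears on the left and is split into the three weight patterns on the right. The target exponents $1/8+1/8+3/4=1$ indicate that the $L^1(m^{12})$ factor is used to purchase most of the $m$ weight, while $L^1(m)$ carries out the remaining $L^1$ interpolation, and $H^{k'}_v$ is responsible purely for the derivatives. The choice $k'=8k+7(1+3/2)$ reflects (i) the gap between $H^{k'}_v$ and $W^{k,\infty}_v$ across dimension $3$ in a Sobolev embedding, and (ii) the additional order $7$ that has to be paid to bring the exponent of $H^{k'}_v$ down to $1/8$ in a Gagliardo--Nirenberg redistribution.

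My first step would be to observe that the polynomial weight $\vv^q$ is free in this inequality: because $m(v)=e^{b\vv^{\beta}}$ grows superpolynomially, one has $\vv^{q}\leq C_{q,\varepsilon}\,m^{\varepsilon}$ for every $\varepsilon>0$. Choosing $\varepsilon$ small, it is therefore enough to prove the inequality with $\vv^q m$ replaced by $m^{1+\varepsilon}$ on the left and absorb the constant $C_{q,\varepsilon}$ into $C$. In particular the slack $12\cdot\tfrac{1}{8}+1\cdot\tfrac{3}{4}=\tfrac{9}{4}\geq 1+\varepsilon$ is wide enough to accommodate the polynomial loss.

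Next, to distribute the exponential weight, I would apply Hölder directly at the level of each derivative $\partial_v^{\beta}h$, $|\beta|\leq k$: writing $|\partial_v^{\beta}h|\,m^{1+\varepsilon} = |\partial_v^{\beta}h|^{1/8}\cdot|\partial_v^{\beta}h|^{1/8}m^{12\cdot 1/8}\cdot|\partial_v^{\beta}h|^{3/4}m^{1\cdot 3/4}$ and applying the three-factor Hölder with exponents $8,8,4/3$ yields, after integration in $v$, a product of three weighted $L^1$ norms of $\partial_v^{\beta}h$ raised to the correct powers. This reduces the claim to proving a Gagliardo--Nirenberg-type inequality of the form
\[
 \|h\|_{W^{k,1}_v(m^{12})}^{1/8}\;\|h\|_{W^{k,1}_v(m)}^{3/4}\;(\text{a }H^{k'}\text{-controlled factor})\;\leq\; C\|h\|_{H^{k'}_v}^{1/8}\|h\|_{L^1_v(m^{12})}^{1/8}\|h\|_{L^1_v(m)}^{3/4},
\]
that is, to remove the derivatives from the two weighted $L^1$ factors while paying only $H^{k'}_v$-regularity on the one unweighted factor. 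This last step is exactly the content of the unweighted Mischler--Mouhot lemma applied to $hm^{6}$ and $hm^{1/2}$ respectively (whose derivatives produce a Leibniz tail that is bounded by the same norms with harmlessly shifted exponents, again absorbed by the superpolynomial growth of $m$).

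The main obstacle is bookkeeping: one must carefully choose how the derivatives act on $h$ versus on powers of the smooth weight $m$, so that each Leibniz tail is of strictly lower order and can be reabsorbed into the RHS. A secondary subtlety is verifying that $k'=8k+7(1+3/2)$ is sharp enough to supply both the Sobolev embedding constant and the full derivative order in the two weighted pieces after Leibniz expansion; this is where the explicit formula $k'=8k+7(1+3/2)$ comes from, and matches the convention of the cited Mischler--Mouhot lemma which the authors explicitly claim extends to weights of the form $\vv^q m$.
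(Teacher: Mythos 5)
The paper does not actually prove this lemma: it is quoted from Mischler--Mouhot (Lemma B.1 of the cited self-similarity paper) with the one-line remark that the extension from the weight $m$ to $\vv^q m$ is immediate, so there is no internal proof to compare against. Your first step --- absorbing $\vv^q$ into a small power of $m$ and noting the weight slack $12\cdot\tfrac18+1\cdot\tfrac34=\tfrac94>1$ --- is a correct way to justify that extension, and your three-factor H\"older decomposition (with exponents $8,8,\tfrac43$, and using $m\geq 1$ to replace $m^{1+\varepsilon}$ by $m^{9/4}$) correctly explains where the exponents $\tfrac18,\tfrac18,\tfrac34$ and the weights $m^{12},m$ come from. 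Up to that point the argument is sound and yields
\[
\|h\|_{W^{k,1}_v(\vv^q m)}\;\leq\;C\sum_{|\beta|\leq k}\|\partial_v^{\beta}h\|_{L^1_v}^{1/8}\,\|\partial_v^{\beta}h\|_{L^1_v(m^{12})}^{1/8}\,\|\partial_v^{\beta}h\|_{L^1_v(m)}^{3/4}.
\]

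The gap is in the last step, where you must strip the $k$ derivatives off the two weighted $L^1$ factors while charging only the unweighted $H^{k'}_v$ norm. The tool you invoke, the ``unweighted'' inequality $\|h\|_{W^{k,1}_v}\leq C\|h\|_{H^{k'}_v}^{1/8}\|h\|_{L^1_v}^{7/8}$, is false: testing it on $h(\cdot/\lambda)$ gives a left-hand side of order $\lambda^{3}$ against a right-hand side of order $\lambda^{3/16}\cdot\lambda^{21/8}=\lambda^{45/16}$, so it fails as $\lambda\to\infty$; the strongly weighted factor $\|h\|_{L^1(m^{12})}$ in the true statement is exactly what excludes such spreading, and there is no purely unweighted version to fall back on. The proposed patch of applying the lemma to $hm^{6}$ or $hm^{1/2}$ does not close the argument either: the resulting norm $\|hm^{6}\|_{H^{k'}_v}$ carries the unbounded weight $m^{12}$ inside an $L^2$ integral and is controlled by none of the three norms appearing in the statement, and in any case this step is circular, since it reduces the weighted interpolation inequality to instances of essentially the same inequality. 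A correct proof has to interlace weight and regularity rather than decouple them: one passes from weighted $L^1$ to weighted $L^2$ by Cauchy--Schwarz, interpolates Sobolev regularity on the Fourier side using $\|h\|_{H^{-s}_v}\leq C\|h\|_{L^1_v}$ for $s>3/2$ (which, with interpolation parameter $\tfrac18$, is precisely where $k'=8k+7(1+3/2)$ comes from), and only then splits the exponential weights by H\"older. As written, your reduction does not constitute a proof of the lemma.
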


Now we present an useful result given by Mischler Mouhot in \cite[Proposition~3.1]{MischlerMouhotSelfSimilarity}. Although, they proved it for a different type of weights it can we extended to our case.

\begin{proposition}\label{prop.MM31}
For any $k,q\in\N$ there exist $C>0$ such that for any smooth functions $f,g$ (say $f,g\in\mathcal{S}(\R^N)$) and any $\alpha\in[0,1]$ it holds
\[
 \|\cQa^{\pm}(f,g)\|_{W^{k,1}_v(\vv^q m)}\leq C_{k,m}\|f\|_{W^{k,1}_v(\vv^{q+1} m)}\|g\|_{W^{k,1}_v(\vv^{q+1} m)}.
\]
\end{proposition}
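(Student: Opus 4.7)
My plan is to proceed by induction on $k$, with the induction step reducing all cases to the base case $k=0$ via the derivative identity \eqref{eqn.Vderivative} (which I will verify holds uniformly in $\alpha\in[0,1]$, not only in the elastic case). Thus the whole proof rests on two ingredients: a uniform-in-$\alpha$ weight inequality for the post-collisional velocity $v'$, and the translation invariance of $\cQa^{\pm}$ in velocity.

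\textbf{Base case $k=0$.} For the loss part I use the identity $\cQa^{-}(f,g)(v)=4\pi f(v)(|\cdot|\ast g)(v)$, which is independent of $\alpha$, and bound $|v-v_*|\leq \vv\langle v_*\rangle$ to get
\[
\|\cQa^{-}(f,g)\|_{L^{1}_v(\vv^q m)}\leq 4\pi\|f\|_{L^{1}_v(\vv^{q+1}m)}\|g\|_{L^{1}_v(\vv m)}.
\]
For the gain part I use $|\cQa^{+}(f,g)|\leq \cQa^{+}(|f|,|g|)$ together with the weak form with test function $\phi=\vv^q m$:
\[
\int \cQa^{+}(|f|,|g|)\,\vv^q m\,dv
  =\int |f(v)||g(v_*)|\,|v-v_*|\,\langle v'\rangle^q m(v')\,dv\,dv_*\,d\sigma.
\]
The crucial step is the weight bound
\[
\langle v'\rangle^{q}m(v')\leq \vv^{q} m(v)\,\langle v_*\rangle^{q} m(v_*),\qquad \alpha\in[0,1],
\]
which I will verify as follows. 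Energy dissipation \eqref{eqn.EnergyDissipation} gives $|v'|^2\leq |v|^2+|v_*|^2$, hence $\langle v'\rangle^2\leq\langle v\rangle^2\langle v_*\rangle^2$ and so $\langle v'\rangle^q\leq \vv^q\langle v_*\rangle^q$ for $q\geq 0$. Since $\beta/2\in(0,1/2)$, subadditivity of $x\mapsto x^{\beta/2}$ applied to $a=\langle v\rangle^2$ and $b=|v_*|^2$ gives $\langle v'\rangle^{\beta}\leq\vv^{\beta}+\langle v_*\rangle^{\beta}$, hence $m(v')\leq m(v)\,m(v_*)$. Multiplying and combining with $|v-v_*|\leq\vv\langle v_*\rangle$ and $\int_{\S^2}d\sigma=4\pi$ yields
\[
\|\cQa^{+}(f,g)\|_{L^{1}_v(\vv^q m)}\leq 4\pi\,\|f\|_{L^{1}_v(\vv^{q+1}m)}\|g\|_{L^{1}_v(\vv^{q+1}m)}.
\]

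\textbf{Inductive step.} I next check that the derivative identity
\[
\partial_{v_i}\cQa^{\pm}(f,g)=\cQa^{\pm}(\partial_{v_i}f,g)+\cQa^{\pm}(f,\partial_{v_i}g)
\]
holds for every $\alpha\in[0,1]$. This follows from translation invariance: the map $(v,v_*)\mapsto(v',v'_*)$ commutes with simultaneous translations (both $v'$ and $v'_*$ shift by $h$ when $v,v_*$ do), so a change of variable in the weak form gives $\tau_h\cQa^{\pm}(f,g)=\cQa^{\pm}(\tau_h f,\tau_h g)$, and differentiating at $h=0$ produces the product rule. Iterating it and invoking Leibniz gives
\[
\nabla_v^{k}\cQa^{\pm}(f,g)=\sum_{|a|+|b|=k}\binom{k}{a,b}\,\cQa^{\pm}(\nabla_v^{a}f,\nabla_v^{b}g),
\]
so applying the $k=0$ estimate to each term and summing concludes the induction.

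\textbf{Main obstacle.} The only substantive point is the uniform-in-$\alpha$ weight bound $\langle v'\rangle^{q}m(v')\leq \vv^q m(v)\langle v_*\rangle^q m(v_*)$. The polynomial factor is standard and follows from energy dissipation, but the exponential factor $m(v')\leq m(v)m(v_*)$ relies critically on $\beta<1$ (so that $\beta/2<1$ and subadditivity applies); this is where the hypothesis $\beta\in(0,1)$ of the weight used throughout the paper enters. A minor technical point is that the derivative identity must be established for $\cQa$ itself, not only for the elastic $\cQe$ where it was stated, but translation invariance makes this routine.
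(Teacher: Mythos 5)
Your proof is correct, and its skeleton (loss term via the convolution identity $\cQa^-(f,g)=fL(g)$, a uniform-in-$\alpha$ weight inequality for $\langle v'\rangle^q m(v')$ derived from the energy dissipation \eqref{eqn.EnergyDissipation}, then the derivative identity plus Leibniz to reduce $k\geq 1$ to $k=0$) is the same as the paper's. The one place where you genuinely diverge is the gain term: the paper estimates $\|\cQa^+(f,g)\|_{L^1_v(\vv^q m)}$ by duality, writing it as $\sup_{\psi}I(\psi)$ and then splitting the spherical integral $S(\psi)=S_+(\psi)+S_-(\psi)$ and invoking the boundedness of the operators $S_\pm$ from Gamba--Panferov--Villani, whereas you use positivity ($|\cQa^+(f,g)|\leq\cQa^+(|f|,|g|)$) and test directly against $\vv^q m$ in the weak form, so the spherical integral contributes only the factor $4\pi$. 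For the $L^1$-based norms at stake your route is more elementary and self-contained (the $S_\pm$ machinery is really only needed for $L^p$, $p>1$); both give the same bound. Two further remarks in your favor: you prove the weight inequality \eqref{eqn.WeigthIneq} (with constant $1$) rather than citing it, correctly isolating the role of $\beta\in(0,1)$ via subadditivity of $x\mapsto x^{\beta/2}$; and you explicitly justify that the product rule \eqref{eqn.Vderivative} holds for $\cQa^{\pm}$ for every $\alpha\in[0,1]$ via translation invariance of the collision map, a point the paper uses repeatedly but only states for $\cQe$.
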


\begin{proof}
Let us begin by considering $\cQa^-$. Recall that 
\begin{equation*}
\left\langle\cQa^-(f,g),\psi\right\rangle=\int_{\R^3\times\R^3\times\S^2}g(v_*)f(v)\psi(v)|v-v_*|d\sigma dv_* dv=\left\langle fL(g),\psi\right\rangle,
\end{equation*}
where $L(g)(v)=4\pi(|\cdot|\ast g)(v)$. Hence, since 
\[
 |L(g)|\leq 4\pi\vv\|g\|_{L^|_v(\vv)},
\]
we have that
\[
 \|\cQa^-(f,g)\|_{L^1_v(\vv^q m)}\leq  4\pi\|g\|_{L^1_v(\vv^{q+1} m)}\|f\|_{L^1_v(\vv^{q+1} m)}.
\]
Moreover, using \eqref{eqn.Vderivative} we obtain a constant $C>0$ such that
\[
 \|\partial_v\cQa^-(f,g)'\|_{L^1_v(\vv^q m)}\leq C \|g\|_{W^{1,1}_v(\vv^{q+1} m)}\|f\|_{W^{1,1}_v(\vv^{q+1} m)}.
\]
We continue in this fashion obtaining our result.\\

We proceed to estimate the gain term. By duality we have
\begin{align*}
 \|\cQa^+(f,g)&\|_{L^1_v(\vv^q m)}=\sup_{\|\psi\|_{L^{p'}}=1}\int_{\R^3}\cQa^+(f,g)\psi(v)\vv^q m(v)dv\\
 &=\sup_{\|\phi\|_{L^{p'}}=1}\int_{\R^3\times\R^3}f(v)g(v_*)|v-v_*|\int_{\S^2}\psi(v')\left\langle v'\right\rangle^q m(v')d\sigma dv_*dv\\
 &=\sup_{\|\phi\|_{L^{p'}}=1}I(\psi).
\end{align*}
Hence, using \eqref{eqn.WeightIneqAbsolute} we can assert that for some positive constant $C_1$
\[
 I(\psi)\leq C_1\int_{\R^3\times\R^3}f(v)g(v_*)\vv^{q+1}m(v) \left\langle v_* \right\rangle^q m(v_*)\int_{\S^2}\psi(v')d\sigma dv_*dv.\\
\]

Let  us call $S(\psi)=\int_{\S^2}\psi(v')d\sigma$. In fact, we split $S(\psi)$ into two parts $S_{+}(\psi)$ and $S_{-}(\psi)$ where
\[
 S_{\pm}(\psi)=\int_{\pm u\cdot\sigma>0}\psi(v')d\sigma.
\]
By \cite[Proposition~ 4.2]{GambaPanferovVillani} we have that the operators
\begin{align*}
&S_+:L^r(\R^3)\to L^{\infty}(\R^3_v,L^r(\R^3_{v_*})),\\
&S_-:L^r(\R^3)\to L^{\infty}(\R^3_{v_*},L^r(\R^3_{v}))
\end{align*}
are bounded for every $1\leq r\leq \infty$. Therefore, we conclude
\begin{align*}
 &\|\cQa^+(f,g)\|_{L^1_v(\vv^q m)}\\
 &\leq \sup_{\|\psi\|_{L^{p'}}=1}C_1\int_{\R^3\times\R^3}f(v)g(v_*)\vv^{q+1}m(v)\left\langle v_*\right\rangle^q m(v_*)(S_{+}(\psi)+S_{-}(\psi)) dv_*dv\\
 &\leq C_2\|f\|_{L^1_v(\vv^{q+1}m)}\|g\|_{L^1_v(\vv^{q+1}m)}.
\end{align*}
For the derivatives we proceed analogously to the proof for the loss term.
\end{proof}

The following result may be proved in much the same way as \cite[Proposition~3.2]{MischlerMouhotSelfSimilarity}.\\

\begin{proposition}\label{prop.MM32}
For any $\alpha,\alpha'\in(0,1]$, and any $g\in L^1_v(\vv^{q+1} m)$, $f\in W^{1,1}_v(\vv^{q+1} m)$ it holds
\[
 \|\cQa^{+}(f,g)-\mathcal{Q}_{\alpha'}^+(f,g)\|_{L^1_v(\vv^q m)}\leq p(\alpha-\alpha')\|f\|_{W^{1,1}_v(\vv^{q+1}m)}\|g\|_{L^1_v(\vv^{q+1} m)},
\]
and
\[
 \|\cQa^{+}(g,f)-\mathcal{Q}_{\alpha'}^+(g,f)\|_{L^1_v(\vv^q m)}\leq p(\alpha-\alpha')\|f\|_{W^{1,1}_v(\vv^{q+1}m)}\|g\|_{L^1_v(\vv^{q+1} m)}.
\]
Where $p(r)$ is an explicit polynomial converging to 0 if $r$ goes to 0.
\end{proposition}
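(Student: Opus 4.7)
The key observation is that the post-collisional velocities depend linearly, hence smoothly, on the restitution coefficient: directly from their defining formulas
\[
v'_\alpha - v'_{\alpha'} = \tfrac{\alpha' - \alpha}{4}(u - |u|\sigma), \qquad v'_{*,\alpha} - v'_{*,\alpha'} = \tfrac{\alpha - \alpha'}{4}(u - |u|\sigma),
\]
so that $|v'_\alpha - v'_{\alpha'}|+|v'_{*,\alpha}-v'_{*,\alpha'}| \leq \tfrac{1}{2}|\alpha - \alpha'|\,|u|$. Since the natural dual of the target space $L^1_v(\vv^q m)$ is of $L^\infty$-type and thus carries no regularity, this smoothness can only be exploited through the $W^{1,1}_v$-hypothesis on one of the arguments, which is exactly the structural asymmetry present in the statement.

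The plan is to regard $s \mapsto \mathcal{Q}_s^+(f,g)$ as a smooth curve of bilinear operators on any closed subinterval of $(0,1]$ and to write
\[
 \cQa^+(f,g)-\mathcal{Q}_{\alpha'}^+(f,g) = \int_{\alpha'}^{\alpha}\partial_s\mathcal{Q}_s^+(f,g)\,ds.
\]
Using the Carleman-type representation of the inelastic gain operator from \cite[Theorem~1.4]{ArlottiLods} and \cite[Proposition~1.5]{MischlerMouhotSelfSimilarity2}, a differentiation under the integral sign shows that $\partial_s\mathcal{Q}_s^+(f,g)$ is a bilinear expression with the same Carleman-type structure as $\mathcal{Q}_s^+$, but with an additional prefactor bounded by $|u|/4$ (coming from $|\partial_s v'_s|$) and a $v$-gradient placed by the chain rule on one of the arguments. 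For the first inequality we arrange the computation so that the $s$-derivative falls on $f$, which is exactly the argument endowed with $W^{1,1}$-regularity; for the second inequality the symmetric Carleman parametrization (or equivalently a single integration by parts against the $v_*$-slot) transfers the derivative onto the second argument, which is again the one assumed in $W^{1,1}$.

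Applying Proposition~\ref{prop.MM31} to this derivative operator, and noting that the extra $|u|$ factor costs exactly one additional power of $\vv\left\langle v_*\right\rangle$ via the weight inequality~\eqref{eqn.WeightIneqAbsolute}, produces a bound uniform in $s$ of the form
\[
 \|\partial_s\mathcal{Q}_s^+(f,g)\|_{L^1_v(\vv^q m)} \leq C\,\|f\|_{W^{1,1}_v(\vv^{q+1}m)}\,\|g\|_{L^1_v(\vv^{q+1}m)}.
\]
Integrating over $s \in [\alpha', \alpha]$ yields the claimed estimate with the explicit polynomial $p(r)=Cr$; sharper polynomial rates would be available by iterating the differentiation, at the price of additional regularity assumptions on $f$.

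\textbf{Main obstacle.} The delicate step is the rigorous computation of $\partial_s \mathcal{Q}_s^+$ and the verification that the derivative operator retains enough structure to fall under the scope of Proposition~\ref{prop.MM31}. Concretely, one must track how the Carleman integration manifold, its Jacobian, and the pre-collisional parametrization all depend on $s$, and check that the resulting Carleman kernel is dominated pointwise by the original kernel times $|u|$. The behavior near grazing configurations $\hat{u}\cdot\sigma\to 1$, where the direction vector $u-|u|\sigma$ degenerates, is a secondary concern but causes no trouble since it only provides a vanishing prefactor.
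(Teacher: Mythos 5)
Your plan reconstructs exactly the mechanism behind the paper's proof, which simply defers to Proposition~3.2 of Mischler--Mouhot (the reference \cite{MischlerMouhotSelfSimilarity} cited in the proof): exploit the affine dependence $v'_{\alpha}-v'_{\alpha'}=\tfrac{\alpha'-\alpha}{4}(u-|u|\sigma)$, transfer the resulting derivative onto the $W^{1,1}_v$ argument (since the dual of $L^1_v(\vv^q m)$ carries no regularity), and absorb the extra factor of $|u|$ via the weight inequality \eqref{eqn.WeightIneqAbsolute}. The only ingredient the paper adds beyond the citation --- the adaptation of the weight-control bound $m(v')\leq e^{b}m^{k}(v)m^{k}(v_*)$ with $k<1$ to the stretched-exponential weight, needed on the small region where the change of variables $v\mapsto v'$ degenerates --- falls under your acknowledged ``main obstacle'' of tracking the Jacobian, so the two approaches coincide.
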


\begin{proof}
The proof of this proposition follows exactly as in \cite[Proposition~3.2]{MischlerMouhotSelfSimilarity}, due to \eqref{eqn.WeightIneqAbsolute}. Let us just remark that in our case \cite[Lemma~3.3]{MischlerMouhotSelfSimilarity} reads as follows. For any $\delta>0$ and $\alpha\in (0,1)$ it holds that if $\sigma\in \S^2$ and $\sin^2 \xi\geq\delta$, where $\cos\xi=|\sigma\cdot(w/|w|)|$ and $w=v+v_*\neq 0$, then
\begin{equation}\label{eqn.IneqDeltaM}
 m(v')\leq e^b m^k(v)m^k(v_*),
\end{equation}
where $k=(1-\delta/160)^{\beta/2}$. \\

Moreover, from the proof of \cite[Lemma~3.3]{MischlerMouhotSelfSimilarity} we have that 
\[
 |v|^2\leq (1-\delta/160)(|v|^2+|v_*|^2).
\]
This, together with $\vv^{\beta}\leq 1+|v|^{\beta}$ concludes the proof of \eqref{eqn.IneqDeltaM}. 
\end{proof}

The next result is a reformulation of \cite[Proposition~ A.2]{BisiCanizoLodsUniqueness} for our weight function.

\begin{proposition}\label{prop.BoundH}
Set $m(v)=\exp(b\vv^{\beta})$ with $b>0$ and $\beta\in(0,1)$ and let
\[
 H(v_*)=\int k_{e}(v,v_*)\vv^q m(v)dv\quad \forall w\in\T^3.
\]
Then, there exist a constant $K=K(e,b,\beta)>0$ such that
\[
 H(v_*)\leq K(1+|v_*|^{1-\beta})\left\langle v_*\right\rangle^q m(v_*),
\]
for every $v_*\in\T^3$.
\end{proposition}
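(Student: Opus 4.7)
The plan is to combine duality with a Laplace-type estimate on the sphere. By Fubini and the identity $\int_{\R^3}\cL^+(h)\,\psi\,dv=\int_{\R^3}h(v_*)H(v_*)\,dv_*$, matched with the weak formulation of $\cQe^+(h,\cM_0)$ on the test function $\psi(v)=\vv^q m(v)$,
\[
H(v_*)=\int_{\R^3\times\S^2}\cM_0(y)\,|v_*-y|\,\langle v'\rangle^q m(v')\,d\sigma\,dy,
\]
where $v'=\tfrac{3-e}{4}v_*+\tfrac{1+e}{4}y+\tfrac{1+e}{4}|v_*-y|\sigma$. The case $|v_*|\leq 1$ is handled by crude bounds and the Gaussian decay of $\cM_0$, so in what follows I assume $|v_*|\geq 1$.

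The core of the argument exploits that $v'$ is affine in $\sigma$: $|v'|$ attains its maximum at the direction $\sigma_0=A/|A|$ with $A=\tfrac{3-e}{4}v_*+\tfrac{1+e}{4}y$, and for $\theta:=\angle(\sigma,\sigma_0)$ a direct expansion gives
\[
|v'|^2_{\max}-|v'|^2=\tfrac{1+e}{2}|A|\,|v_*-y|(1-\cos\theta),\qquad |v'|_{\max}\leq |v_*|+\tfrac{1+e}{2}|y|.
\]
I will split the $y$-integral into the main region $\{|y|\leq |v_*|/4\}$ and its complement, the latter contributing at most $Cm(v_*)$ because of the Gaussian tail of $\cM_0$. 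In the main region $|A|\geq \tfrac{3}{8}|v_*|$ and $|v_*-y|\geq \tfrac{3}{4}|v_*|$, so $|v'|^2_{\max}-|v'|^2\geq c_e|v_*|^2(1-\cos\theta)$ with $c_e>0$ depending only on $e$. The concavity of $s\mapsto s^{\beta/2}$ for $\beta\in(0,1)$, together with $\langle v'\rangle_{\max}\leq C\langle v_*\rangle$ on the main region and the subadditivity $(a+b)^\beta\leq a^\beta+b^\beta$, then translates this geometric gap into
\[
\langle v'\rangle^\beta\;\leq\;\langle v_*\rangle^\beta+C_1\langle y\rangle^\beta-c'_e(1-\cos\theta)\langle v_*\rangle^\beta,
\]
so $m(v')\leq m(v_*)\,e^{bC_1\langle y\rangle^\beta}\,e^{-bc'_e(1-\cos\theta)\langle v_*\rangle^\beta}$.

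The change of variables $u=1-\cos\theta$ yields the Laplace-type bound $\int_{\S^2}e^{-bc'_e(1-\cos\theta)\langle v_*\rangle^\beta}\,d\sigma\leq C/\langle v_*\rangle^\beta$. Combined with the polynomial controls $\langle v'\rangle^q\leq C_q\langle v_*\rangle^q\langle y\rangle^q$ and $|v_*-y|\leq \langle v_*\rangle\langle y\rangle$, and the finiteness of $\int\cM_0(y)\langle y\rangle^{q+1}e^{bC_1\langle y\rangle^\beta}\,dy$, one obtains $H(v_*)\leq C\langle v_*\rangle^{q+1-\beta}m(v_*)$, which gives the claim since $\langle v_*\rangle^{1-\beta}\leq 2^{1-\beta}(1+|v_*|^{1-\beta})$. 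The main obstacle will be to rigorously derive the linear lower bound $\langle v_*\rangle^\beta-\langle v'\rangle^\beta\geq c'_e(1-\cos\theta)\langle v_*\rangle^\beta$ from the geometric gap uniformly in $y$ on the main region, while keeping the $\langle y\rangle^\beta$ correction separated cleanly enough for the subsequent $y$-integration against $\cM_0$ to converge; the concavity of $s\mapsto s^{\beta/2}$ is the essential ingredient that makes this bound survive the possibly large discrepancy between $|v'|$ and $|v_*|$ near the minimizing direction.
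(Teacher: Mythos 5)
Your argument is correct in its essentials but follows a genuinely different route from the paper. The paper never passes to the $\sigma$-representation: it inserts the explicit Carleman-type formula for the kernel, $k_e(v,v_*)=\frac{C_e}{|v-v_*|}\exp\{-c_0((2+\mu)|v-v_*|+2\frac{(v-v_*)}{|v-v_*|}\cdot v_*)^2\}$, passes to polar coordinates $(\rho,y)$ in $u=v-v_*$, and splits the domain into $\{3|v_*|y\ge -2\rho\}$ — where the Gaussian in $\rho$ alone gives a contribution $C\langle v_*\rangle^q m(v_*)$ — and its complement, where the elementary integral $\int_0^{|v_*|^2}e^{bw^{\beta/2}}dw\le \frac{2}{b\beta}|v_*|^{2-\beta}e^{b|v_*|^{\beta}}$, divided by the prefactor $|v_*|^{-1}$ coming from the Gaussian in the transverse variable, produces the factor $|v_*|^{1-\beta}$. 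You instead extract the same factor as $\langle v_*\rangle^{1}\cdot\langle v_*\rangle^{-\beta}$: the $+1$ from the collision kernel $|v_*-y|$ and the $-\beta$ from the angular Laplace estimate $\int_{\S^2}e^{-c(1-\cos\theta)\langle v_*\rangle^{\beta}}d\sigma\lesssim\langle v_*\rangle^{-\beta}$. Your route avoids computing the explicit kernel (so it would extend to situations where the Carleman kernel is not available in closed form), at the price of the pointwise inequality for $\langle v'\rangle^{\beta}$; that inequality does hold, since with $a:=1+(|v_*|+\tfrac{1+e}{2}|y|)^2$ and $x:=c_e|v_*|^2(1-\cos\theta)$ one has $\langle v'\rangle^2\le a-x$, and the tangent-line bound $(a-x)^{\beta/2}\le a^{\beta/2}-\tfrac{\beta}{2}a^{\beta/2-1}x$ for the concave map $s\mapsto s^{\beta/2}$, together with $a\le 3|v_*|^2$ uniformly on $\{|y|\le |v_*|/4,\ |v_*|\ge 1\}$, yields the $-c'_e(1-\cos\theta)\langle v_*\rangle^{\beta}$ term.

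One point deserves care, precisely at the spot you flag as the main obstacle. To control the positive part $a^{\beta/2}$ you must not expand $(|v_*|+\tfrac{1+e}{2}|y|)^2$ and then apply subadditivity of $s\mapsto s^{\beta/2}$ term by term: the cross term $(1+e)|v_*||y|$ would contribute $C|v_*|^{\beta/2}|y|^{\beta/2}$ inside the exponential, and this cannot be absorbed as $\gamma|y|^{\beta}+C_{\gamma}$ uniformly in $v_*$; it would leave a stray factor $e^{\epsilon b|v_*|^{\beta}}$ that destroys the claimed bound. Either apply $\sqrt{1+(A+B)^2}\le\sqrt{1+A^2}+B$ first and only then use $(\langle v_*\rangle+c|y|)^{\beta}\le\langle v_*\rangle^{\beta}+c^{\beta}|y|^{\beta}$, or — cleaner — replace your bound on $|v'|_{\max}$ by the energy-dissipation inequality $|v'|^2\le |v_*|^2+|y|^2$, valid for every $e\in(0,1]$ by \eqref{eqn.EnergyDissipation}, which removes the cross term altogether and gives $\langle v'\rangle^{\beta}\le\langle v_*\rangle^{\beta}+|y|^{\beta}-c'_e(1-\cos\theta)\langle v_*\rangle^{\beta}$ directly. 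With that fixed, the $y$-integration against $\cM_0(y)\langle y\rangle^{q+1}e^{b|y|^{\beta}}$ converges, the complementary region $|y|>|v_*|/4$ is killed by the Gaussian tail as you say, and your argument closes.
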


\begin{proof}
Consider $\hat{m}(v)=\exp(b|v|^\beta)$. It is easy to see that 
\begin{equation}\label{eqn.BCLWeightIneq}
 \hat{m}(v)\leq m(v)\leq e^b \hat{m}(b).
\end{equation}
Furthermore, there exist a constant $C'_{q,b}>0$ such that $\vv^q\leq C'_{q,b}(1+|v|^q)$, we have that  
\[
 H(v_*)\leq C_{q,b}\int k_{e}(v,v_*)(1+|v|^q) \hat{m}(v)dv,\\
\]
where $C_{q,b}=e^b C'_{q,b}$.\\

Let us recall that $k_e$ is given by 
\begin{align*}
 k_e&(v,v_*)\\
    &=\frac{C_e}{|v-v_*|}\exp\left\lbrace c_0\left((1+\mu)|v-v_*|+\frac{|v-u_0|^2-|v_*-u_0|^2}{|v-v_*|}\right)^2\right\rbrace,
\end{align*}
for some constants $C_e,\mu>0$ depending only on $e$ and where $c_0= 1/(8\theta_0)$. Here we will assume $u_0=0$ to simplify the computations, the general case follows in a similar manner. Taking into account that $\frac{|v|^2-|v_*|^2}{|v-v_*|}-|v-v_*|=2\frac{v-v_*}{|v-v_*|}\cdot v_*$, we can rewrite $k_e$ as
\[
 k_e(v,v_*)=\frac{C_e}{|v-v_*|}\exp\left\lbrace -c_0\left((2+\mu)|v-v_*|+2\frac{v-v_*}{|v-v_*|}\cdot v_*\right)^2\right\rbrace.\\
\]

Moreover, performing the change of variables $u=v-v_*$ and using spherical coordinates with $\rho=|u|$ and $\rho |v_*|y=u\cdot v_* $, one gets 
\[
 H(v_*)= C_0\int_A F(\rho,y)d\rho d y,
\]
with $A=[0,+\infty)\times[-1,1]$ and
\begin{align*}
 F(\rho,y)&=\left[1+(\rho^2+|v_*|^2+2\rho|v_*|y)^{q/2}\right]\rho\times\\
          &\exp\left\lbrace -c_0\left((2+\mu)\rho+2|v_*|y \right)^2+b\left(\rho^2+|v_*|^2+2\rho|v_*|y \right)^{q/2}\right\rbrace.
\end{align*}
 
Let us split $A$ into two regions $A=A_1\cup A_2$ where
\[
 A_1:=\{(\rho,y)\in A: 3|v_*|y\geq -2\rho\}\quand A_2:=A\backslash A_1.\\
\]

We first compute the integral over $A_1$. Notice that since $y\leq 1$ and $\beta\in(0,1)$ we get that for every $(\rho,y)\in A$
\begin{align*}
 \exp\left\lbrace b(\rho^2+|v_*|^2+2\rho|v_*|y)^{q/2}\right\rbrace&\leq \exp \left(b(\rho+|v_*|)^{\beta}\right)\\
   &\leq \exp\left(b\rho^{\beta}\right)\exp\left(b|v_*|^{\beta}\right).
\end{align*}
Moreover, since $(2+\mu)\rho+2|v_*|y\geq (\mu+2/3)\rho$ for any $(\rho,y)\in A_1$ we have
\begin{align}\label{eqn.IntA1}\nonumber
 \int_{A_1} F(\rho,y)d\rho dy &\leq C_{1,q}|v_*|^q e^{b|v_*|^{\beta}}\int_{0}^{+\infty}\rho\exp\left(-c_0(2/3+\mu)^2\rho^2+b\rho^{\beta}\right)dyd\rho\\\nonumber
  &+C_{2,q}e^{b|v_*|^{\beta}}\int_{0}^{+\infty}\rho^{q+1}\exp\left(-c_0(2/3+\mu)^2\rho^2+b\rho^{\beta}\right)dyd\rho\\\nonumber
  &\leq C_1 \left\langle v_*\right\rangle^q \hat{m}(v_*)\\
  &\leq C_1 \left\langle v_*\right\rangle^q m(v_*),
\end{align}
for some constant $C_1>0$, since the integrals are convergent.\\

On the other hand, for every  $(\rho,y)\in A_2$ we have
\[
 \rho^2+|v_*|^2+2\rho|v_*|y<|v_*|^2-\frac{1}{3}\rho^2\quand \rho<\frac{3}{2}|v_*|y.
\]
In that case, using the change of variables $z=(2+\mu)\rho+2|v_*|y$ we get 
\begin{align*}
 &\int_{A_2} F(\rho,y)d\rho dy \\
 &\leq C_{3,q}\left\langle v_*\right\rangle^q \int_{0}^{(3/2)|v_*|}\rho\exp\left(b(|v_*|^2-\frac{1}{3}\rho^2)^{\beta/2}\right)d\rho\cdot\int_{-1}^{1}\exp\left(-c_0((2+\mu)\rho+2|v_*|y)^2\right)dy\\
  &\leq C_{3,q}\cdot\frac{1}{2|v_*|}\cdot \left\langle v_*\right\rangle^q \int_{0}^{(3/2)|v_*|}\rho\exp\left(b(|v_*|^2-\frac{1}{3}\rho^2)^{\beta/2}\right)d\rho\cdot \int_{-\infty}^{+\infty}\exp\left(-c_0 z^2\right)dz\\
  &\leq C_{4,q}\cdot\frac{1}{2|v_*|}\cdot \left\langle v_*\right\rangle^q \int_{0}^{(3/2)|v_*|}\rho\exp\left(b(|v_*|^2-\frac{1}{3}\rho^2)^{\beta/2}\right)d\rho,
\end{align*}
since the integral over $z$ is finite. Finally, setting $w=|v_*|^2-\rho ^2/3$ we obtain,
\begin{align}\label{eqn.IntA2}\nonumber
 &\int_{A_2} F(\rho,y)d\rho dy \\\nonumber
 &\leq C_{4,q}\cdot\frac{3}{2|v_*|}\cdot \left\langle v_*\right\rangle^q \int_{|v_*|^2/4}^{|v_*|^2}\exp\left(b w^{\beta/2}\right)dw\\\nonumber
 &\leq C_{4,q}\cdot\frac{3}{2|v_*|}\cdot \left\langle v_*\right\rangle^q \int_{0}^{|v_*|^2}\exp\left(b w^{\beta/2}\right)dw\\\nonumber
 &\leq C_{4,q}\cdot\frac{3}{2|v_*|}\cdot \left\langle v_*\right\rangle^q \cdot \frac{2}{b\beta}|v_*|^{2-\beta}\exp(b|v_*|^{\beta})\\\nonumber
 &\leq C_2\left\langle v_*\right\rangle^q \hat{m}(v_*)|v_*|^{1-\beta}\\
 &\leq C_2\left\langle v_*\right\rangle^q m(v_*)|v_*|^{1-\beta},
\end{align}
for some positive constant $C_2$. Thus, putting together \eqref{eqn.IntA1} and \eqref{eqn.IntA2} we get the result.
\end{proof}

%------------------------------------------------------------------------------------------------------
%Appendix B
%------------------------------------------------------------------------------------------------------
\section{Semigroup Generators}\label{sec.Semi}

The aim of this section is to prove that the operator $\cBad$ generates a $C_0$-semigroup in $L^1_{x,v}(\vv^q m)$ for any $q\geq 0$. Moreover, the same proof remains true in the spaces $E_j$ with $j=-1,0,1$ and $\cE$.\\% Moreover, the spaces $E_j$ for $j=-1,0,1$ and $\cE$ are compactly embedded in $L^1_{x,v}(\vv^q m)$, thus this also guarantees that $\cBad$ generates a $C_0$-semigroup in those spaces (see for example \cite[Proposition~ 3.12]{BanasiakArlotti}).\\

The proof presented here follows the one in \cite[Appendix~ C]{AlonsoBaglandLods}, with some adaptations due to the definition and splitting of the operator $\cBad$ presented here.\\

Let us recall the definition of the operator $\cBad$:
\[
 \cBad(h):=\cTaR(h)+\cLR(h)-\Sigma h-v\cdot \nabla_x h,
\]
where $\Sigma=\Na+\nu_e$, with domain $D(\cBad)=W^{1,1}_{x,v}(\vv^{q+1}m)$. Moreover, recall the definition of $\cTaR$ and $\cLR$
\begin{align*}
&\cTaR(h)=\cQaR^+(h,\Fa)+\cQaR^+(\Fa,h)-\cQaR^-(\Fa,h),\\
&\cLR(h)=\mathcal{Q}^+_{e,R}(h,\cM_0),
\end{align*}
where $\cQaR^+$, $\mathcal{Q}^+_{e,R}$ (resp. $\cQaR^-$) is the gain (resp. loss) part of the collision operator associated to the mollified collision kernel $(1-\Theta_{\delta})B$.\\

Consider the operator
\[
 A_0(h):=-\Sigma h-v\cdot\nabla_x h.
\] 
Notice that, by a similar argument as in \eqref{eqn.ColFreqIneq}, we have that there exist $\sigma_0,\sigma_1>0$ such that 
\[
 0<\sigma_0\leq \sigma_0\vv\leq\Sigma(v)\leq \sigma_1\vv,
\]
for every $v\in\R^3$. Therefore, the domain of $A_0$ coincides with the domain of $v\cdot\nabla_x h$ wich is $W^{1,1}_{x,v}(\vv^{q+1}m)$. It is easy to see that $A_0$ generates a $C_0$-semigroup $\{U(t):t\geq0\}$ given by
\[
 U(t)h(x,v):=e^{-\Sigma(v)t}h(x-tv,v),
\]
which satisfies 
\[
 \|U(t)h\|_{L^1_{x,v}(\vv^q m)}\leq e^{-\sigma_0t}\|h\|_{L^1_{x,v}(\vv^q m)}.
\]
In particular $\{U(t):t\geq0\}$ is a nonnegative contractive semigroup in $L^1_{x,v}(\vv^q m)$.\\

\begin{lemma}\label{lem.BoundRA}
For any $\alpha>0$, $q\geq 0$ and $\lambda>0$
 \begin{equation}\label{eqn.BoundRA0}
  \|R_{A_0}(\lambda)\|_{\sB(L^1_{x,v}(\vv^q m),L^1_{x,v}(\vv^{q+1} m))}\leq \frac{1}{\sigma_0},\\
 \end{equation}
and,
\begin{equation}\label{eqn.BoundRA01}
 \|R_{A_0}(\lambda)\|_{\sB(L^1_{x,v}(\vv^q m))}\leq \frac{1}{\lambda+\sigma_0}.\\
\end{equation}
\end{lemma}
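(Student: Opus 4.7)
My plan is to exploit the explicit semigroup $U(t)h(x,v)=e^{-\Sigma(v)t}h(x-tv,v)$ already written down just above the lemma, together with the Hille--Yosida representation of the resolvent, and then simply track the weight $\vv$ through the estimate. Concretely, since $A_0$ generates the $C_0$-semigroup $\{U(t)\}_{t\geq0}$ which is a contraction on $L^1_{x,v}(\vv^q m)$ (even satisfying $\|U(t)\|\leq e^{-\sigma_0 t}$), the resolvent set of $A_0$ contains $(-\sigma_0,\infty)\supset(0,\infty)$ and, for every $\lambda>0$,
\[
 R_{A_0}(\lambda)h=(A_0-\lambda)^{-1}h=-\int_0^{\infty}e^{-\lambda t}U(t)h\,dt,
\]
so that
\[
 R_{A_0}(\lambda)h(x,v)=-\int_0^{\infty}e^{-(\lambda+\Sigma(v))t}\,h(x-tv,v)\,dt.
\]

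From here the proof is a direct computation. I would take absolute values, apply Fubini, and perform the translation $x\mapsto x+tv$, which is measure-preserving on $\T^3$. This yields, for any weight $\vv^{r}m(v)$,
\[
 \|R_{A_0}(\lambda)h\|_{L^1_{x,v}(\vv^{r}m)}\leq \int_{\R^3}\left(\int_0^{\infty}e^{-(\lambda+\Sigma(v))t}dt\right)\|h(\cdot,v)\|_{L^1_x}\vv^{r}m(v)\,dv=\int_{\R^3}\frac{\|h(\cdot,v)\|_{L^1_x}\vv^{r}m(v)}{\lambda+\Sigma(v)}\,dv.
\]

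For \eqref{eqn.BoundRA0} I would take $r=q+1$ and use the pointwise lower bound $\lambda+\Sigma(v)\geq\Sigma(v)\geq\sigma_0\vv$ recalled just after the definition of $A_0$; the factor $\vv^{-1}$ exactly absorbs the extra weight, producing $\sigma_0^{-1}\|h\|_{L^1_{x,v}(\vv^q m)}$. For \eqref{eqn.BoundRA01} I would take $r=q$ and use instead $\lambda+\Sigma(v)\geq\lambda+\sigma_0\vv\geq\lambda+\sigma_0$ since $\vv\geq1$, which gives exactly the claimed constant $(\lambda+\sigma_0)^{-1}$.

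There is no real obstacle here; the only point to keep in mind is the correct sign in the Hille--Yosida representation (because of the convention $R_{A_0}(\lambda)=(A_0-\lambda)^{-1}$ rather than $(\lambda-A_0)^{-1}$), which does not affect the operator norm, and the fact that the change of variables $x\mapsto x+tv$ is licit on the torus. Both bounds then follow as stated.
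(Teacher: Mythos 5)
Your proof is correct, but it takes a genuinely different route from the paper's. You use the explicit Laplace--transform (Hille--Yosida) representation of the resolvent built from the explicit formula $U(t)h(x,v)=e^{-\Sigma(v)t}h(x-tv,v)$, take absolute values pointwise, and integrate out $t$ to produce the factor $(\lambda+\Sigma(v))^{-1}$, after which both bounds drop out of the pointwise inequalities $\lambda+\Sigma(v)\geq\sigma_0\vv$ and $\lambda+\Sigma(v)\geq\lambda+\sigma_0$. The paper instead works directly on the resolvent identity: it first reduces to nonnegative $h$ (using that $U(t)\geq0$ implies $R_{A_0}(\lambda)\geq0$ and that the positive cone of $L^1_{x,v}(\vv^q m)$ is generating), writes $h=(\lambda+\Sigma)g+v\cdot\nabla_x g$ for $g=R_{A_0}(\lambda)h$, multiplies by $\vv^q m$ and integrates, using that the transport term vanishes upon integration over $\T^3$; the single identity $\int(\lambda+\Sigma)g\,\vv^q m\,dv\,dx=\|h\|_{L^1_{x,v}(\vv^q m)}$ then yields both estimates simultaneously. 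Your argument is more explicit and bypasses the positivity reduction entirely (absolute values inside the time integral do the work), but it relies on having a closed formula for $U(t)$; the paper's energy-type argument only needs positivity of the semigroup and the divergence structure of the transport term, and so is the more robust template for the perturbed operators treated later in that appendix. Both yield the same constants, and your remark about the sign convention $R_{A_0}(\lambda)=(A_0-\lambda)^{-1}$ is well taken (the paper itself silently switches to $(\lambda I-A_0)^{-1}$ in its proof, which is immaterial for the operator norm).
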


\begin{proof}
Since, $\{U(t):t\geq0\}$ is a nonnegative semigroup $R_{A_0}(\lambda)$ is also nonnegative. Moreover, since the positive cone of $L^1_{x,v}(\vv^q m)$ is generating (i.e. every element in $L^1_{x,v}(\vv^q m)$ is the difference of two elements in the positive cone), it is enough to consider $h$ nonnegative.\\

Let $g=R_{A_0}(\lambda)h$. Thus, we have
\begin{align*}
 h&=(\lambda I-A_0)R_{A_0}(\lambda)h=(\lambda I-A_0)g\\
  &= (\lambda+\Sigma)g+v\cdot\nabla_x g.
\end{align*}
Multipling by the weight and integrating over $\R^3\times\T^3$ we get
\[
 \int_{\R^3\times\T^3}(\lambda+\Sigma(v))g(x,v)\vv^q m(v)dvdx=\|h\|_{L^1_{x,v}(\vv^{q} m)}.
\]
Hence, since $\sigma_0\vv\leq\Sigma(v)$ we have
\[
 \lambda\|g\|_{L^1_{x,v}(\vv^{q} m)}+\sigma_0\|g\|_{L^1_{x,v}(\vv^{q+1} m)}\leq \|h\|_{L^1_{x,v}(\vv^{q} m)},
\]
which concludes our proof.
\end{proof}

The next step is to split $\cTRR:=\cTaR+\cLR$ into positive and negative parts
\begin{align*}
&\cTRR^+(h):=\cQaR^+(h,\Fa)+\cQaR^+(\Fa,h)+\cLR(h),\\
&\cTRR^-(h):=\cQaR^-(\Fa,h).
\end{align*}

By a similar argument as the one presented in \cite[Proposition~ B.2]{CanizoLods} one has the following lemma:

\begin{lemma}\label{lem.BoundTR}
 For any $q\geq 0$ there exist $\kappa(\delta)$ going to 0 as $\delta\to0$, and such that for ever $h\in L^1_{x,v}(\vv^{q} m)$ 
 \[
  \|\cTRR^+(h)\|_{L^1_{x,v}(\vv^{q} m)}\leq\kappa(\delta)\|h\|_{L^1_{x,v}(\vv^{q+1} m)},
 \]
 and that $\cTRR^-$ is bounded in $L^1_{x,v}(\vv^{q} m)$.
\end{lemma}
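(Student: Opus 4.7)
The proof adapts the four-piece decomposition of the weak form used in Step 1 of Lemma \ref{lem.HypodissBed} to each of the three gain operators making up $\cTRR^+$, now without the help of the absorbing collision frequency. For the loss part we have pointwise
\[
\cQaR^-(\Fa,h)(v)=h(v)\int_{\R^3\times\S^2}(1-\Td)\Fa(v_*)|v-v_*|\,d\sigma\,dv_*,
\]
and $|v-v_*|\leq \vv\langle v_*\rangle$ together with the finite first moment of $\Fa$ given by Lemma \ref{lem.BoundSobolevNorm} yield at once $\|\cTRR^-(h)\|_{L^1_{x,v}(\vv^q m)}\leq C\|h\|_{L^1_{x,v}(\vv^{q+1}m)}$, which is the second assertion.

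For each gain piece $\cQ_R^+(f,g)$ (namely $\cLR$, $\cQaR^+(h,\Fa)$ or $\cQaR^+(\Fa,h)$), duality against $\psi(v)=\vv^q m(v)\,\mathrm{sign}\,\cQ_R^+(f,g)(v)$ gives
\[
\|\cQ_R^+(f,g)\|_{L^1_v(\vv^q m)}\leq \int_{\R^3\times\R^3\times\S^2}|f(v)||g(v_*)|(1-\Td)|v-v_*|\langle v'\rangle^q m(v')\,d\sigma\,dv_*\,dv,
\]
and $1-\Td\leq \mathbbm{1}_{\Od^c}$ with $\Od^c=\Od^1\cup\Od^2\cup\Od^3\cup\Od^4$. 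I split the integral into the four corresponding pieces $I_1,\dots,I_4$. The contributions $I_2$, $I_3$, $I_4$ transfer almost verbatim from the computations leading to (\ref{eqn.QeI2}), (\ref{eqn.QeI3}) and (\ref{eqn.QeI4}): $I_2$ gains a factor $2\delta$ from $|v-v_*|<2\delta$; $I_4$ gains the angular measure $\Lambda(\delta)\to 0$; and $I_3$ uses Lemma \ref{lem.ExistenceCbeta}, Lemma \ref{lem.ExistenceCb} and the kinetic energy dissipation (\ref{eqn.EnergyDissipation}) to factor $m(v')$ into $m(v)m(v_*)$ times an integrable decay in $|u\cdot n|$, producing a further factor $\delta$. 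Combined with (\ref{eqn.WeightIneqAbsolute}) and the exponential moments of $\cM_0$ and $\Fa$ (Lemma \ref{lem.BoundSobolevNorm}), each of these three pieces is bounded by $\bigl(O(\delta)+O(\Lambda(\delta))\bigr)\|h\|_{L^1(\vv^{q+1}m)}$ regardless of which of the three gain operators is considered.

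The delicate piece is $I_1$, supported on $\Od^1=\{|v|>\delta^{-1}\}$, where direct use of (\ref{eqn.WeightIneqAbsolute}) furnishes no $\delta$-smallness whenever $h$ is evaluated at $v$ (as in $\cLR$ and $\cQaR^+(h,\Fa)$). For these two operators I follow the kernel strategy from Step 1 of Lemma \ref{lem.HypodissBed}: write the gain via its Carleman representation, interchange the order of integration so that the weight $\vv^q m(v)$ is integrated against the kernel in $v$, and invoke Proposition \ref{prop.BoundH} (for $\cLR$) or its inelastic analogue (for $\cQaR^+(h,\Fa)$), which is available because the inelastic Carleman kernel of Arlotti--Lods shares the Gaussian tail behaviour of $k_e$. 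One obtains a bound of the form
\[
I_1\leq K\int_{|v_*|>\delta^{-1}}|h(v_*)|(1+|v_*|^{1-\beta})\langle v_*\rangle^q m(v_*)\,dv_*.
\]
The elementary inequality $1+|v_*|^{1-\beta}\leq 2\delta^\beta\vv$ on $\{|v_*|>\delta^{-1}\}$, valid for $\delta<1$ and $\beta\in(0,1)$ because $|v_*|^{-\beta}<\delta^{\beta}$ and $\delta^{\beta-1}>1$ there, converts this into $2K\delta^\beta\|h\|_{L^1(\vv^{q+1}m)}$. For the remaining gain $\cQaR^+(\Fa,h)$, the difficulty disappears because $\Fa$ itself is restricted to $\{|v|>\delta^{-1}\}$ and its Gaussian tail (Lemma \ref{lem.BoundSobolevNorm}) renders $\int_{|v|>\delta^{-1}}\Fa(v)\vv^{q+1}m(v)\,dv$ smaller than any power of $\delta$. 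Summing the four pieces over the three gain operators yields $\kappa(\delta)=O(\delta^\beta)+O(\Lambda(\delta))\to 0$, which is the first assertion. The main obstacle is thus concentrated in the $I_1$ estimate, where the absence of an absorbing collision frequency forces us to rely on the explicit Carleman kernel bounds rather than the simple weight inequality (\ref{eqn.WeightIneqAbsolute}).
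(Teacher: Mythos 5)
Your treatment of $I_1$, $I_2$ and $I_4$ is sound; in particular, combining Proposition \ref{prop.BoundH} with the elementary bound $1+|v_*|^{1-\beta}\leq 2\delta^{\beta}\left\langle v_*\right\rangle$ on $\{|v_*|>\delta^{-1}\}$ is exactly the right way to extract smallness now that the absorption term $-\nu_e h$ is unavailable. But the claim that $I_3$ ``transfers almost verbatim'' to all three gain operators fails for $\cQaR^+(\Fa,h)$. In the chain leading to \eqref{eqn.QeI3}, the weight transfer $m(v')\leq C\,m(v)\exp\{b|v_*|^{\beta}-bC'_{\beta}|v'_*|^{\beta}\}$ followed by the conversion of $|v'_*|$-decay into $|u\cdot n|$-decay dumps a surplus $\exp\{b(1+C'_{\beta})|v_*|^{\beta}\}$ onto the \emph{second} argument's variable, and the paper absorbs it with the pointwise Gaussian decay of $\cM_0(v_*)$. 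For $\cQaR^+(\Fa,h)$ the second argument is $h$, which only carries the weight $m(v_*)=e^{b\left\langle v_*\right\rangle^{\beta}}$; since $C'_{\beta}>0$ the surplus exponent strictly exceeds $b$ and is not controlled by $\left\langle v_*\right\rangle^{q+1}m(v_*)$, while the fallback of using \eqref{eqn.WeightIneqAbsolute} together with $\mathbbm{1}_{\{|u|\geq\delta^{-1}\}}\leq\delta|u|$ costs one extra power of $\left\langle v_*\right\rangle$ on $h$, which the statement does not allow. This piece needs a genuinely different argument (e.g.\ a Grad/Carleman kernel bound for $\cQa^+(\Fa,\cdot)$ acting in its second slot). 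Relatedly, the ``inelastic analogue of Proposition \ref{prop.BoundH}'' you invoke for $\cQaR^+(h,\Fa)$ on $\Od^1$ is asserted rather than proved: Proposition \ref{prop.BoundH} exploits the explicit Gaussian form of $\cM_0$ inside $k_e$, and the corresponding bound for the kernel of $\cQa^+(\cdot,\Fa)$ requires pointwise Gaussian upper bounds on $\Fa$, which the paper only provides in the $L^1$ sense (Lemma \ref{lem.BoundSobolevNorm}). These are precisely the points the paper itself sidesteps by citing \cite[Proposition~B.2]{CanizoLods} instead of writing a proof, so they are where your argument most needs detail.

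Separately, your formula for the loss term has the arguments swapped. With the paper's convention $\cQe^-(f,g)=fL(g)$ (see the weak form preceding \eqref{eqn.DefColFreqNuE} and the computation in the proof of Lemma \ref{lem.BoundednessAad}), one has $\cTRR^-(h)(v)=\Fa(v)\int(1-\Td)|v-v_*|\,h(v_*)\,d\sigma\,dv_*$, not $h(v)$ times a truncated convolution of $\Fa$. This matters for the conclusion: your version yields only $\|\cTRR^-(h)\|_{L^1_{x,v}(\left\langle v\right\rangle^{q}m)}\leq C\|h\|_{L^1_{x,v}(\left\langle v\right\rangle^{q+1}m)}$, which is strictly weaker than the claimed boundedness of $\cTRR^-$ \emph{on} $L^1_{x,v}(\left\langle v\right\rangle^{q}m)$, and it is the stronger statement that Appendix \ref{sec.Semi} actually needs in the bounded-perturbation step $\cBad=A_1-\cTRR^-$. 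With the correct formula the extra power of $\left\langle v\right\rangle$ coming from $|v-v_*|\leq\left\langle v\right\rangle\left\langle v_*\right\rangle$ lands on $\Fa$, whose moments are finite, and $\|\cTRR^-(h)\|_{L^1_{x,v}(\left\langle v\right\rangle^{q}m)}\leq C\|\Fa\|_{L^1_v(\left\langle v\right\rangle^{q+1}m)}\|h\|_{L^1_{x,v}(\left\langle v\right\rangle^{q}m)}$ follows at once.
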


Let us introduce the operator $A_1:=A_0+\cTRR^+$. We want to prove that $\lambda I- A_1$ is invertible. In order to do this, notice that
\[
 \lambda I-A_1=(\lambda I-A_0)( I-R_{A_0}\cTRR^+),
\] 
so it is enought to see that $ I-R_{A_0}\cTRR^+$ is invertible. By the lemmas \ref{lem.BoundRA} and \ref{lem.BoundTR} we have that for every $\lambda>0$
\begin{align}\label{eqn.BoundA1}
 \|\cTRR^+R_{A_0}(\lambda) h\|_{L^1_{x,v}(\vv^{q} m)}&\leq \kappa(\delta)\|R_{A_0}(\lambda) h\|_{L^1_{x,v}(\vv^{q+1} m)}\\\nonumber
 &\leq \frac{\kappa(\delta)}{\sigma_0}\|h\|_{L^1_{x,v}(\vv^{q} m)}.
\end{align}
Thus, taking $\delta$ small enough we have $\kappa(\delta)<\sigma_0$, so one deduces that $\R^+\subset\rho(A_1)$ and, for every $\lambda>0$, $ I-R_{A_0}\cTRR^+$ is invertible. Moreover, by the Neumann
series (see \cite[Remark~ 2.34]{BanasiakArlotti}) one has 
\[
 R_{A_1}(\lambda)=R_{A_0}\sum_{j=0}^{\infty}\left(\cTRR^+ R_{A_0}(\lambda)\right)^j.
\]

\begin{comment}
Therefore, thanks to \ref{eqn.BoundRA0} and \ref{eqn.BoundA1}
\begin{align*}
 \|R_{A_1}&(\lambda)\|_{\sB(L^1_{x,v}(\vv^{q} m),L^1_{x,v}(\vv^{q+1} m))}\\
 &\leq \|R_{A_0}(\lambda)\|_{\sB(L^1_{x,v}(\vv^{q} m),L^1_{x,v}(\vv^{q+1} m))}\times\sum_{j=0}^{\infty}\|\left(\cTRR^+R_{A_0}(\lambda)\right)^j\|_{\sB(L^1_{x,v}(\vv^{q} m))}\\
 &\leq  \frac{1}{\sigma_0}\sum_{j=0}^{\infty}\left(\frac{\kappa(\delta)}{\sigma_0}\right)^j=\frac{1}{\sigma_0-\kappa(\delta)}.\\
\end{align*}
\end{comment}

Therefore, according to \eqref{eqn.BoundRA01}
\[
 \lim_{\lambda\to\infty}\|R_{A_1}(\lambda)\|_{\sB(L^1_{x,v}(\vv^{q} m))}\leq \lim_{\lambda\to\infty}\frac{1}{\lambda+\sigma_0}=0.\\
\]

Finally, notice that $\cBad=A_1-\cTRR^-$. Since $\cTRR^-$ is bounded, one can take $\lambda$ large enough such that
\[
 \|\cTRR^-\|_{\sB(L^1_{x,v}(\vv^{q} m))}\|R_{A_1}(\lambda)\|_{\sB(L^1_{x,v}(\vv^{q} m))}<1.
\]
%\vspace{0.5mm}

Hence, one deduces that $\lambda I-\cBad$ is invertible for $\lambda$ large enough. This, together with the hypo-dissipativity ensures that $\cBad$ generates a $C_0$-semigroup due to Lumer-Phillips theorem (see \cite[Theorem~ 3.19]{BanasiakArlotti} or \cite[Theorem~ 4.3]{Pazy}).

%------------------------------------------------------------------------------------------------------
%Appendix C
%------------------------------------------------------------------------------------------------------
\section{Spectral Theorems}

In this section we present a more abstract theorem regarding enlargement of the functional space semigroup decay. More specifically:\\

\begin{theorem}{\cite[Theorem~2.13]{GualdaniMischlerMouhot}}\label{thm.Enlargement}
Let $E',\cE'$ be two Banach spaces with $E'\subset\cE'$ dense with continuous embedding, and consider $L\in\mathcal{C}(E')$, $\cL\in\mathcal{C}(\cE')$ with $\left.\cL\right|_{E'}=L$ and $a\in\R$. Assume
\begin{enumerate}[(A)]
\item \label{it.SGGualdani1} $L$ generates a semigroup $e^{tL}$ in $E'$, $L-a$ is hypodissipative on $Range(\id-\Pi_{L,a})$ and 
      \[
       \Sigma(L)\cap\Delta_a:=\{\xi_1,...,\xi_k\}\subset \Sigma_d(L).
      \]
\item \label{it.SGGualdani2} There exist $\cA,\cB\in\mathcal{C}(\cE')$ such that $\cL=\cA+\cB$, $\left.\cA\right|_{E'}=A$, $\left.\cB\right|_{E'}=B$, some $n\geq1$ and $C_a>0$ such that
\begin{enumerate}[(B.1)]
\item \label{it.SGGualdani21} $\cB-a$ is hypodissipative in $\cE'$,
\item \label{it.SGGualdani22} $\cA\in\sB(\cE')$ and $A\in\sB(E')$,
\item \label{it.SGGualdani23} $T_n:=(\cA S_{\cB})^{(*n)}$ satisfies $\|T_n(t)\|_{\sB(\cE',E')}\leq C_a e^{at}$.\\
\end{enumerate}
Then $\cL$ is hypodissipative in $\cE'$ with
\[
 \|S_{\cL}(t)-\sum_{j=1}^k S_L(t)\Pi_{\cL,\xi_j}\|_{\sB(\cE')}\leq C'_a t^ne^{at},
\]
for all $t\geq 0$ and for some $C'_a>0$. 
\end{enumerate}
\end{theorem}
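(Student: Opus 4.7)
The plan is to combine an iterated Duhamel (factorization) identity with the spectral description of $L$ in the small space $E'$. Iterating the Duhamel relation $S_\cL = S_\cB + S_\cL * (\cA S_\cB)$ based on $\cL = \cA + \cB$ gives
\begin{equation*}
S_\cL(t) = S_\cB(t) + \sum_{k=1}^{n-1}(S_\cB * T_k)(t) + (S_\cL * T_n)(t),
\end{equation*}
where $T_k := (\cA S_\cB)^{(*k)}$. Each of the first $n$ pieces already lives in the ambient space $\cE'$: using (B.1), $\|S_\cB(t)\|_{\sB(\cE')} \leq M e^{at}$, and using (B.2) to compose with $\cA$, a direct convolution estimate yields $\|S_\cB * T_k\|_{\sB(\cE')} \leq C t^k e^{at}$ for each $k \leq n-1$. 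Summing, this contribution is dominated by $C t^{n-1} e^{at}$.

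The remaining term $(S_\cL * T_n)(t)$ is the subtle one. By (B.3), $T_n(s): \cE' \to E'$ with norm at most $C_a e^{as}$, and since $\cL$ and $L$ agree on $E'$, one has $S_\cL(t-s) T_n(s) = S_L(t-s) T_n(s)$ on $\cE'$ for each $s\in[0,t]$. Inside $E'$, assumption (A) provides the splitting $S_L(\tau) = \sum_{j=1}^k S_L(\tau)\Pi_{L,\xi_j} + S_L(\tau)(\id - \Pi_{L,a})$, with the second summand satisfying $\|S_L(\tau)(\id - \Pi_{L,a})\|_{\sB(E')} \leq C e^{a\tau}$. Integrating the second summand against $T_n$ gives $\int_0^t e^{a(t-s)} e^{as}\, ds = t e^{at}$; combined with the $t^{n-1}$ factor from the first bundle this yields the target bound $t^n e^{at}$ for the ``continuous spectrum'' part.

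To absorb the discrete-spectrum contribution $\sum_{j=1}^k \int_0^t S_L(t-s)\Pi_{L,\xi_j} T_n(s)\, ds$ into $\sum_{j=1}^k S_L(t)\Pi_{\cL,\xi_j}$, I would identify $\Pi_{\cL,\xi_j}$ with the extension of $\Pi_{L,\xi_j}$ to $\cE'$ via the factorized resolvent: iterating the resolvent identity $R_\cL(z) = R_\cB(z) + R_\cL(z)\cA R_\cB(z)$ produces, for $z \in \rho(L) \cap \rho(\cB)$,
\begin{equation*}
R_\cL(z) = \sum_{k=0}^{N-1} R_\cB(z)(\cA R_\cB(z))^k + R_L(z)(\cA R_\cB(z))^N.
\end{equation*}
Choosing $N$ large enough (using (B.3) with $n=N$) makes all terms but the last holomorphic on $\Delta_a$, so Cauchy's formula on small circles around each $\xi_j$ extracts a spectral projector that lies in $\sB(\cE', E')$ and coincides with $\Pi_{L,\xi_j}$ on $E'$; this defines $\Pi_{\cL,\xi_j}$ and identifies $\Sigma(\cL)\cap\Delta_a = \{\xi_1,\ldots,\xi_k\}$. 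Substituting back and subtracting collects the projected contribution, and what remains is bounded by $C'_a t^n e^{at}$.

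The main obstacle will be the spectral-consistency step: ensuring $R(\Pi_{\cL,\xi_j}) \subset E'$ and $\Pi_{\cL,\xi_j}|_{E'} = \Pi_{L,\xi_j}$ in a manner that captures the full algebraic (not merely geometric) eigenspace, and organizing the operator calculus so that every convolution and resolvent identity converges as a bounded operator on the correct mixed space. The delicate point is that (B.3) is the only hypothesis giving any regularization from $\cE'$ to $E'$; it must be invoked exactly once per term so that the $n$-fold convolution structure is compatible with the spectral decomposition performed in the small space.
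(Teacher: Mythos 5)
The paper does not prove this theorem at all---it is imported verbatim from \cite[Theorem~2.13]{GualdaniMischlerMouhot} and used as a black box---so there is no internal proof to compare against. Your sketch is a faithful reconstruction of the original Gualdani--Mischler--Mouhot factorization argument (iterated Duhamel identity, direct $\cE'$-estimates on the first $n$ terms, regularization of the remainder into $E'$ via $T_n$ where the spectral decomposition of $L$ applies, and identification of $\Pi_{\cL,\xi_j}$ through the factorized resolvent); the only blemishes are cosmetic---the contributions of the first bundle and of the remainder term add rather than multiply to give $t^ne^{at}$, and the iterated resolvent identity carries alternating signs---while the step you single out as delicate, namely $R(\Pi_{\cL,\xi_j})\subset E'$ with $\Pi_{\cL,\xi_j}|_{E'}=\Pi_{L,\xi_j}$ at the level of algebraic eigenspaces, is precisely where the substantive work lies in the cited source.
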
 

Actually, the assumption \ref{it.SGGualdani23} follows from \cite[Lemma~ 2.17]{GualdaniMischlerMouhot} which yields an estimate on the norms $\|T_n\|_{\sB(E_j ,E_{j+1})}$ for $j =-1, 0$:

\begin{lemma}\label{lem.Enlargement}
Let $X,Y$ be two Banach space with $X\subset Y$ dense with continuous embedding, and consider $L\in \sB(X)$,  $\cL\in \sB(Y)$ such that $\left.\cL\right|_X = L$, $\cL=\cA+\cB$ and $a\in\R$. We assume that there exist some intermediate spaces
\[
 X=\cE_J\subset\cE_{J-1}\subset\cdots\cE_2\subset\cE_1=Y,
\]
with $J\geq2$, such that if we denote $\cA_j=\left.\cA\right|_{\cE_j}$ and $\cB_j=\left.\cB\right|_{\cE_j}$ 
\begin{enumerate}
\item $(B_j-a)$ is hypodissipative and $\cA_j$ is bounded on $\cE_j$ for $j=1,..,J$.
\item  There are some constants $l\in\N^*$, $C\geq1$, $K\in\R$, $\gamma\in[0,1)$ such that for all $t\geq0$ for $j=1,..,J-1$
\[
 \|T_{l}(t)\|_{\sB(\cE_j,\cE_{j+1})}\leq C\frac{e^{Kt}}{t^{\gamma}}.
\]
\end{enumerate}
Then for any $a'>a$, there exist some constructive constants $n\in\N$, $C_{a'}\geq1$  such that for all $t\geq0$ 
\[
 \|T_{n}(t)\|_{\sB(\cE_j,\cE_{j+1})}\leq C_{a'}e^{a't}.
\]
\end{lemma}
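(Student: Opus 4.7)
My plan is to iterate the regularity-gain estimate of hypothesis~(2) in order to climb the scale $\cE_1\supset\cE_2\supset\cdots\supset\cE_J$, and then to absorb the resulting polynomial and constant prefactors into the exponential using the hypodissipativity of $\cB$ in the smallest space. The natural choice is $n:=l(J-1)+M$ for a large integer $M$ to be fixed later, together with the semigroup--convolution identity $T_{m+k}=T_m\ast T_k$, which allows me to split the product into a ``regularity gain'' block followed by a ``rate stabilisation'' block that stays at the top of the scale.

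First I would record that, by hypothesis~(1) and Theorem~\ref{thm.HypodEquivalence}, each $\cB_j$ generates a $C_0$-semigroup with $\|S_{\cB_j}(t)\|_{\sB(\cE_j)}\leq Me^{at}$; combined with the boundedness of $\cA_j$, this gives $\|T_1(t)\|_{\sB(\cE_j)}\leq Ce^{at}$, and then inductively via $T_{k+1}=T_1\ast T_k$,
\[
 \|T_k(t)\|_{\sB(\cE_j)}\leq \frac{C^k t^{k-1}}{(k-1)!}\,e^{at},\qquad j=1,\dots,J.
\]

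Next, I would iterate hypothesis~(2) through the identity $T_{lk}=T_l^{\ast k}$. Using the Beta-function computation
\[
 \int_0^t(t-s)^{-\gamma}\,s^{k(1-\gamma)-1}\,ds= B\bigl(1-\gamma,\,k(1-\gamma)\bigr)\,t^{(k+1)(1-\gamma)-1},
\]
one obtains inductively, for $1\leq k\leq J-1$,
\[
 \|T_{lk}(t)\|_{\sB(\cE_j,\cE_{j+k})}\leq C_k\,e^{Kt}\,t^{k(1-\gamma)-1},
\]
and in particular $\|T_{l(J-1)}(t)\|_{\sB(Y,X)}\leq \tilde C\,e^{Kt}\,t^{(J-1)(1-\gamma)-1}$. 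Since $\gamma<1$, choosing $J$ (equivalently, the depth of the nested scale) large enough makes the exponent non-negative so that the bound is regular at $t=0$.

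Finally, writing $T_n=T_M\ast T_{l(J-1)}$ with the first factor acting in $X=\cE_J$ at rate $a$, I would estimate
\[
 \|T_n(t)\|_{\sB(Y,X)}\leq C'\int_0^t e^{a(t-s)}(t-s)^{M-1}\,e^{Ks}\,s^{(J-1)(1-\gamma)-1}\,ds,
\]
and bound the integrand by $e^{at}$ times a polynomial in $t$ (in the favourable case $K\leq a$; otherwise one first uses an additional iteration of the regularity-gain step inside $X$ to damp the $e^{Ks}$ factor against the hypodissipativity of $\cB_J$). For any $a'>a$, the polynomial factor is then dominated by $C_{a'}e^{(a'-a)t}$ uniformly on $t\geq 0$, which yields the desired estimate $\|T_n(t)\|_{\sB(Y,X)}\leq C_{a'}e^{a't}$. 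The main technical obstacle is the iterated-convolution bookkeeping, and in particular verifying that the long-time exponential rate of the composition is indeed controlled by the hypodissipativity constant $a$ rather than by $K$; this is the delicate interplay between hypodissipativity and the regularity-gain estimate, and is where the freedom to take $M$ large in the ``stabilisation'' block becomes essential.
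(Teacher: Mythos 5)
The paper itself offers no proof of this statement: it is quoted verbatim from Gualdani--Mischler--Mouhot \cite[Lemma~2.17]{GualdaniMischlerMouhot}, so your attempt must be judged on its own merits. Your skeleton --- the diagonal bounds $\|T_k(t)\|_{\sB(\cE_j)}\leq C^k t^{k-1}e^{at}/(k-1)!$, the Beta-function induction yielding $\|T_{l(J-1)}(t)\|_{\sB(Y,X)}\leq \tilde C e^{Kt}t^{(J-1)(1-\gamma)-1}$, and a final convolution to remove the singularity at $t=0$ and settle the rate --- is the right architecture, and the first two blocks are correct. (Two small caveats: $J$ is prescribed by the hypothesis, so you cannot ``choose $J$ large enough''; this is harmless since the exponent $(J-1)(1-\gamma)-1>-1$ is locally integrable and your last convolution regularises it. Also, with the paper's convention for $\ast$ the factor that acts first is the one evaluated at $t-s$, so your labelling of the two blocks is reversed; again harmless since convolution powers of $T_1$ commute.)

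The genuine gap is in the final step, which is the only delicate point of the lemma. When $K>a$ --- the only non-trivial case, and the one that actually occurs in this paper, where the gain estimate holds with $K=-a'$ while the hypodissipativity rate is $a=-a_1$ with $a'<a_1$ --- your bound
\[
 \int_0^t e^{a(t-s)}(t-s)^{M-1}\,e^{Ks}\,s^{\beta}\,ds
 \;=\;e^{at}\int_0^t e^{(K-a)s}(t-s)^{M-1}s^{\beta}\,ds
\]
does \emph{not} give $C_{a'}e^{a't}$: the contribution of $s\in[t-2,t-1]$ alone is already of order $e^{(K-a)t}t^{\beta}$, so the right-hand side grows like $e^{Kt}$ for every choice of $M$; enlarging the ``stabilisation block'' only changes the constant, never the exponential rate. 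Your parenthetical escape route (``an additional iteration of the regularity-gain step inside $X$'') is not an argument --- the regularity-gain step is precisely what produces the $e^{Kt}$ factor, and it cannot also remove it. The missing idea is a pigeonhole over the convolution simplex: write $T_{Nl}$ as the $N$-fold convolution of $T_l$ over $\{t_1+\cdots+t_N=t\}$, note that at least one $t_{i}$ satisfies $t_{i}\leq t/N$, and on the corresponding piece of the simplex use the off-diagonal rate-$K$ bound \emph{only} on that one short factor while estimating all the other factors with the diagonal rate-$a$ bounds in the source or target space. This yields a rate $a+(K-a)/N$ (times a polynomial), which is below any prescribed $a'>a$ once $N>(K-a)/(a'-a)$; iterating the crossing $J-1$ times then gives the stated conclusion. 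Without this (or an equivalent device), the lemma as stated is not reached.
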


Furthermore, we state a quantitative spectral mapping theorem. A proof for this result can be found in \cite[Proposition~ 2.20]{Tristani}. A more general version of this theorem can be found in \cite{MischlerScher}.

\begin{theorem}\label{thm.SpectralMappingThm}
Consider a Banach space $X$ and an operator $\Lambda\in\sC(X)$ so that $\Lambda=\cA+\cB$ where $\cA\in\sB$ and $\cB-a$ is hypodissipative on $X$ for some $a\in\R$. We assume furthermore that there exist a family $X_j$, $1\leq j\leq m$, $m\geq 2$ of intermediate spaces such that
\[
 X_m\subset D(\Lambda^2)\subset X_{m-1}\subset\cdots X_2\subset X_1=X,
\]
and a family of operators $\Lambda_j$, $\cA_j$, $\cB_j\in\sC(X_j)$ such that
\[
 \Lambda_j=\cA_j+\cB_j, \quad \Lambda_j=\left.\Lambda\right|_{X_j}, \quad \cA_j=\left.\cA\right|_{X_j}, \quad \cB_j=\left.B\right|_{X_j},
\]
and that it holds
\begin{enumerate}[(i)]
\item \label{it.SpectralMapThmI} $( B_j-a)$ is hypodissipative on $X_j$;
\item \label{it.SpectralMapThmII} $\cA_j\in\sB(X_j)$;
\item \label{it.SpectralMapThmIII} there exist $n\in\N$ such that $T_n(t):=(\cA S_{\cB}(t))^{*n}$ satisfies \[\|T_n(t)\|_{\sB(X,X_m)}\leq Ce^{at}.\]
\end{enumerate}
Hence, the following localization of the principal part of the spectrum
\begin{enumerate}[(A)]
\item \label{it.SpectralMapThm1} There are some distinct complex numbers $\xi_1,...,\xi_k\in\Delta_a$, $k\in\N$ such that 
\[
 \Sigma(\Lambda)\cap\Delta_a=\{\xi_1,...,\xi_k\}\subset\Sigma_d(\Lambda);
\]
\end{enumerate}
implies the following quantitative growth estimate on the semigroup:
\begin{enumerate}[(B)]
\item \label{it.SpectralMapThm2} for any $a'\in(a,\infty)\backslash\{\Re\xi_j,j=1,...,k\}$, there exist some constructive constant $C_{a'}>0$ such that for every $t\geq 0$
\[
 \left\|S_{\Lambda}(t)-\sum_{j=1}^k S_{\Lambda}(t)\Pi_{\Lambda,\xi_j}\right\|_{\sB(X)}\leq C_{a'} e^{a't}.
\]
\end{enumerate}
\end{theorem}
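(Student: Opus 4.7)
The plan is to combine an iterated resolvent/Duhamel identity with an inverse-Laplace representation of the semigroup and a contour deformation, in the spirit of Gualdani--Mischler--Mouhot and Mischler--Scher.

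First I would iterate the factorization $\Lambda=\cA+\cB$. The Duhamel formula $S_\Lambda = S_\cB + (\cA S_\cB)\ast S_\Lambda$ in the paper's convolution, iterated $N\geq n$ times, produces
\begin{equation*}
S_\Lambda(t) = \sum_{\ell=0}^{N-1} (T_\ell \ast S_\cB)(t) + (T_N \ast S_\Lambda)(t), \qquad T_\ell := (\cA S_\cB)^{\ast \ell},
\end{equation*}
with the convention $T_0 \ast S_\cB := S_\cB$, and on the Laplace side the iterated resolvent identity
\begin{equation*}
R_\Lambda(z) = \sum_{\ell=0}^{N-1} (-1)^\ell R_\cB(z)(\cA R_\cB(z))^\ell + (-1)^N R_\Lambda(z)(\cA R_\cB(z))^N
\end{equation*}
on the intersection of the resolvent sets of $\cB$ and $\Lambda$. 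Hypodissipativity of $\cB-a$ (assumption \ref{it.SpectralMapThmI}) and boundedness of $\cA$ on each $X_j$ (assumption \ref{it.SpectralMapThmII}) control each partial-sum semigroup term on $X$ by $Ct^{\ell}e^{at}$, which is absorbed in $e^{a't}$ for any $a'>a$.

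Next I invoke the Bromwich representation. Assumption \ref{it.SpectralMapThm1} locates $\Sigma(\Lambda)\cap\Delta_a=\{\xi_1,\dots,\xi_k\}\subset\Sigma_d(\Lambda)$. Fix $a'\in(a,+\infty)\setminus\{\Re\xi_j\}$ and consider a rectangular contour $\Gamma_M$ with vertical sides $\Re z = a'$ and $\Re z = c$ (with $c$ past the type of $S_\Lambda$) and horizontal sides $\Im z = \pm M$. For $f\in D(\Lambda^2)$ the standard Bromwich representation $S_\Lambda(t)f=-\tfrac{1}{2\pi i}\int_{c-i\infty}^{c+i\infty} e^{zt}R_\Lambda(z)f\,dz$ holds, and the residue theorem applied to $e^{zt}R_\Lambda(z)f$ on $\Gamma_M$, together with vanishing of the horizontal contributions as $M\to\infty$, yields
\begin{equation*}
S_\Lambda(t)f - \sum_{j=1}^{k} S_\Lambda(t)\Pi_{\Lambda,\xi_j}f = -\frac{1}{2\pi i}\int_{a'-i\infty}^{a'+i\infty} e^{zt} R_\Lambda(z)f\,dz,
\end{equation*}
the residues producing exactly the projected semigroups $S_{\Lambda,\xi_j}(t)=S_\Lambda(t)\Pi_{\Lambda,\xi_j}$ from the preliminaries. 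Substituting the iterated resolvent identity inside the shifted integral reduces the analysis to the single decisive last term.

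To estimate that term, note that for $N\geq n$ assumption \ref{it.SpectralMapThmIII} gives $T_N:X\to X_m$ with norm at most $Ce^{at}$, so its Laplace transform $(\cA R_\cB(z))^N$ is bounded $X\to X_m$ uniformly on $\Re z = a'$, with constant of order $C/(a'-a)$. Combined with the inclusion $X_m\subset D(\Lambda^2)$ and the identity $R_\Lambda(z)g = z^{-2}R_\Lambda(z)\Lambda^2 g - z^{-2}\Lambda g - z^{-1}g$ valid for $g\in D(\Lambda^2)$, this yields enough decay in $|z|$ along $\Re z = a'$ to obtain absolute convergence of the vertical integral and a $\sB(X)$-bound of order $C_{a'}e^{a't}$. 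The remaining resolvent pieces $R_\cB(z)(\cA R_\cB(z))^\ell$ correspond to the semigroup terms $T_\ell \ast S_\cB$ already controlled in step one, which also fit $O(e^{a't})$. Density of $D(\Lambda^2)$ in $X$ then promotes the estimate from $D(\Lambda^2)$ to the full $\sB(X)$-norm. The main obstacle is precisely this decay analysis along $\Re z = a'$: raw $C_0$-semigroup resolvents need not decay in $|\Im z|$, which is exactly why classical spectral mapping can fail for $C_0$-semigroups, and the entire factorization-and-regularization apparatus is engineered to trade the missing decay in $|\Im z|$ for smoothness in the range of $T_n$, thereby making the shifted Bromwich integral absolutely integrable.
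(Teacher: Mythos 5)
Your proposal is correct and follows essentially the same route as the proof the paper relies on: the paper does not prove Theorem~\ref{thm.SpectralMappingThm} itself but defers to Tristani and Mischler--Scher, whose argument is exactly your iterated Duhamel/resolvent factorization combined with a shifted Bromwich integral, residues at the discrete eigenvalues, and the regularization $T_n:X\to X_m\subset D(\Lambda^2)$ to gain the $|z|^{-2}$ decay that makes the vertical integral absolutely convergent. No genuinely different idea is introduced, and no gap beyond the standard bootstrap for the uniform resolvent bound on $\Re z=a'$, which you correctly identify as the technical heart.
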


%------------------------------------------------------------------------------------------------------
%Bybliography

\bibliography{bibliography}
\nocite{AlonsoLods,AlonsoGamba, AlonsoGambaTaskovic}
\bibliographystyle{plain}

\end{document}